\documentclass[11pt]{amsart}
\usepackage{amsmath,amsfonts,latexsym,graphicx,amssymb,url}
\usepackage{amsthm,txfonts,pifont,bbding,pxfonts,manfnt,mathrsfs}
\usepackage[dvipsnames]{xcolor}

\usepackage{amsmath,amsfonts,latexsym,graphicx,amssymb,url}
\usepackage{hyperref,pdfsync}
\usepackage{amsmath,txfonts,pifont,bbding,pxfonts,manfnt}
\usepackage[active]{srcltx}
\usepackage{wasysym,pstricks}
\usepackage{wrapfig, subfigure,graphicx}
\usepackage{hyperref}
\usepackage{pdfsync}
\usepackage{enumitem}

\usepackage{tikz}
\usetikzlibrary{arrows.meta}
\usetikzlibrary{datavisualization.formats.functions}
\usepackage{pgfplots}
\usetikzlibrary{shadows,shapes,positioning}

\usepackage{tikz-3dplot,graphicx,amsmath}
\usetikzlibrary{arrows.meta}

\usepackage[margin=1in]{geometry}
\usepackage{pdflscape}
\usepackage{flafter}
\usepackage{placeins}

\newcommand{\E}{\mathbf{E}}
\renewcommand{\H}{\mathbf{H}}
\newcommand{\D}{\mathbf{D}}
\newcommand{\B}{\mathbf{B}}
\newcommand{\G}{\mathbf{G}}

\newcommand{\C}{{\mathbb C}}

\newcommand{\R}{{\mathbb R}} 

\newcommand{\e}{\varepsilon}

\renewcommand{\(}{\left(}
\renewcommand{\)}{\right)}
\renewcommand{\k}{{\mathbf{k}}}
\renewcommand{\o}{\omega}
\newcommand{\m}{{\mathbf{m}}}

\renewcommand{\t}{{\rm{t}}}

\numberwithin{equation}{section}

\newtheorem{prop}{Proposition}[section]
\newtheorem{remark}{Remark}[section]

\newtheorem{theorem}{Theorem}[section]

\newtheorem{lemma}[theorem]{Lemma}

\newcommand{\cristiancomment}[1]{#1}
\newcommand{\ericcomment}[1]{#1}

\begin{document}

\title{Refraction laws in spatio-temporal media
}

\author{Cristian E. Guti\'errez}
\address[Cristian E. Guti\'errez]{Department of Mathematics, Temple University}
\email{gutierre@temple.edu}

\author{Eric Stachura}
\address[Eric Stachura]{Department of Mathematics, Kennesaw State University}
\email{estachur@kennesaw.edu}

\thanks{ \today}

\begin{abstract}
We study the time-dependent Maxwell system, formulated in the sense of distributions, for electromagnetic waves propagating through media with temporal and spatial material interfaces. Under explicit trace and regularity assumptions on the permittivity and permeability, we derive the jump conditions produced by temporal discontinuities and by subsequent spatial interfaces. These conditions are used to obtain generalized Snell laws for the wave vectors generated by temporal splitting, reflection, and transmission. We also derive the associated amplitude equations and organize them as a finite-dimensional linear system for a space-time slab geometry. Finally, we provide an explicit oblique-incidence example. 
Our approach does not impose a smooth-field ansatz and allows material parameters that need not be constant away from the interfaces.
\end{abstract}

\maketitle


\tableofcontents

\setcounter{equation}{0}
\section{Introduction}

Time-varying materials have recently attracted substantial attention because temporal modulation provides an additional mechanism for controlling electromagnetic waves. This point of view underlies applications such as frequency conversion, wave amplification, nonreciprocal propagation, and photonic time crystals; see, for example, \cite{mostafa2024temporal, lustig2023photonic, pacheco2025temporal}. For recent tutorials and surveys, see for instance \cite{gao2025fundamentals, zeng2025performance}, and for related developments for spatio-temporal platforms, we mention in particular \cite{pacheco2024spatiotemporal, antyufeyeva2025emulating}. These works build on the classical observation of Morgenthaler \cite{morgenthaler1958velocity} that a temporal discontinuity in the material parameters changes the frequency content of an electromagnetic wave and generates both forward and backward temporal branches.

A temporal interface occurs when one or more material parameters are changed rapidly while the optical wave is present in the medium \cite{engheta2023four}; physically, such a change can be produced, for instance, by a strong optical nonlinearity driven by an ultrafast laser pulse \cite{tirole2024second}. At a purely temporal interface, the spatial phase is matched across the jump, while the frequencies and amplitudes of the generated waves change according to the new material parameters \cite{mendoncca2002time, mendoncca2024time}. 
In our recent paper \cite{CGES25temporal}, we derived the generalized Snell's law at temporal interfaces directly from Maxwell's equations. Since the fields generated at such interfaces are generally discontinuous, the derivation was carried out in the framework of distributions, which provides a natural way to formulate the Maxwell system across temporal jumps. 

In this paper we focus on temporal interfaces and on their interaction with spatial interfaces. We use the geometric notation introduced more fully in Section \ref{sec:general_formulas}. Let
\[
\Omega=\Omega_1\cup\Gamma\cup\Omega_2,
\]
where the disjoint open sets $\Omega_1$ and $\Omega_2$ are separated by a smooth spatial interface $\Gamma$. At time $t=t_0$, the material changes throughout $\Omega$: before the jump we write the material parameters as $(\e^-,\mu^-)$, while after the jump they are $(\e_i^+,\mu_i^+)$ in $\Omega_i$, $i=1,2$. More generally, a superscript $-$ denotes a quantity for $t<t_0$, and a superscript $+$ together with a subscript $i$ denotes its value in $\Omega_i$ for $t>t_0$. Figure \ref{fig:intro-space-time-geometry} shows this basic geometry without introducing the individual reflected and transmitted branches, which are defined later.

\begin{figure}[!htbp]
\centering
\begin{tikzpicture}[
  >=Latex,
  line cap=round,
  line join=round,
  every node/.style={font=\small,text=black}
]
\fill[gray!14] (-5,-2.2) rectangle (5,0);
\fill[blue!12] (-5,0) rectangle (0,3);
\fill[ForestGreen!12] (0,0) rectangle (5,3);

\draw[->,thick] (-5.3,-2.45) -- (5.35,-2.45) node[right] {space};
\draw[->,thick] (-5.3,-2.45) -- (-5.3,3.25) node[above] {time};
\draw[very thick,blue!65!black] (-5,0) -- (5,0)
  node[pos=0.78,below=4pt] {temporal interface $t=t_0$};
\draw[very thick,orange!80!black] (0,0) -- (0,3)
  node[above] {spatial interface $\Gamma$};
\draw[dashed,gray] (0,-2.2) -- (0,0);

\node[align=center] at (0,-1.1)
  {$\Omega$, $t<t_0$\\material $(\e^-,\mu^-)$};
\node[align=center] at (-2.5,1.5)
  {$\Omega_1$, $t>t_0$\\material $(\e_1^+,\mu_1^+)$};
\node[align=center] at (2.5,1.5)
  {$\Omega_2$, $t>t_0$\\material $(\e_2^+,\mu_2^+)$};
\end{tikzpicture}
\caption{Basic space-time geometry. The material changes at the temporal interface $t=t_0$; after this change, the spatial interface $\Gamma$ separates the regions $\Omega_1$ and $\Omega_2$.}
\label{fig:intro-space-time-geometry}
\end{figure}
\FloatBarrier

Although the physical literature on time-varying media is now extensive, a distributional derivation of the interface conditions is useful because the relevant fields and material parameters are naturally discontinuous. We therefore formulate the time-dependent Maxwell system in the sense of distributions and derive the jump conditions directly from Maxwell's equations, rather than imposing them through a smoothness ansatz. This viewpoint is close in spirit to the distributional treatment of spatial metasurfaces in \cite{2025-gutierrez-sabra:Maxwelleqsandgeneralizedsnelllaw}. It also makes clear which traces of the fields and material parameters are needed at temporal and spatial interfaces.

The main goal is then to use these distributional boundary conditions to derive generalized Snell laws and the associated amplitude equations for a medium with both temporal and spatial jumps.   Explicit formulas for these Snell laws are useful in practice because they translate prescribed material jumps into explicit predictions for outgoing frequencies, propagation directions, and amplitudes, which are the quantities measured and engineered in time-varying optical devices. They also make it possible to compare different slab designs and identify parameter regimes in which a desired reflected or transmitted branch is enhanced or suppressed. The phase relations determine the possible wave vectors, while the electric and magnetic boundary conditions determine, or impose compatibility conditions on, the corresponding amplitudes. In the final amplitude calculation, the problem becomes a finite-dimensional linear system whose solvability depends on the prescribed incident field and on the material constants in the slabs.

The outline of this paper is as follows. In Section \ref{sec:Maxwell_distributions}, we formulate the Maxwell system in the sense of space-time distributions and collect the distributional identities needed later, including formulas for time derivatives, divergence, and curl across interfaces. In Section \ref{sec:boundary conditions}, we apply these identities to spatio-temporal material parameters and derive the temporal jump conditions for the electric and magnetic fields, see Theorem \ref{thm:temporal boundary conditions}. In Section \ref{sec:GSL_spacetime}, we use these boundary conditions to derive generalized Snell laws, after first recalling a useful exponential lemma used to separate distinct phase factors. The temporal interface is treated in Section \ref{sec:temporal_interface}, while the spatial interface is treated in Section \ref{sec:spatial_interface}, culminating in Theorem \ref{thm:GSL_spatial_interface}. In Section \ref{sec:amplitude_calculation}, we then use the phase vectors from these Snell laws to calculate the field amplitudes in a space-time slab geometry. There, we first organize the generated fields, then derive the electric and magnetic field amplitude equations, assemble the full linear system \eqref{eq:amplitude-system}, record its solvability conditions, and work out an explicit oblique-incidence example.

\FloatBarrier
\setcounter{equation}{0}
\section{Maxwell equations in the sense of distributions}\label{sec:Maxwell_distributions}

Recall the Maxwell system of equations (in CGS units)
\begin{align}
&\nabla \cdot \D = 4\pi \rho, \label{divergence E zero}\tag{M.1}\\
&\nabla \cdot \B = 0, \label{divergence B zero}\tag{M.2}\\
&\nabla \times \E = -\dfrac{1}{c}\dfrac{\partial \B}{\partial t}\label{Faraday law}\tag{M.3}\\
&\nabla \times \H= \dfrac{4\pi}{c}\mathbf{J}+\dfrac{1}{c}\dfrac{\partial \D}{\partial t}. \label{Ampere Maxwell}\tag{M.4}
\end{align}
where $\E(x,t)$ denotes the electric field, $\H(x,t)$ the magnetic field, $\D(x,t)$ the electric flux density, $\B(x,t)$ the magnetic flux density, and $c$ denotes the speed of light. See \cite[Sections 1.1-1.2]{BornWolf}.
We assume the charge density $\rho=0$ and the current density $\mathbf{J}=0$, and
consider the constitutive equations given by 
\begin{equation}\label{eq:constitutive}
\D=\e\,\E,\qquad \B=\mu\,\H
\end{equation}
where we assume that $\e=\e(x,t)$ and $\mu=\mu(x,t)$ are positive functions of $x\in \R^3,t\in \R$, $t\neq t_0$ for some $t_0$, that satisfy appropriate piecewise smoothness assumptions that will be described in a moment.

We begin recalling the notions needed to analyze the Maxwell system in distributional sense.
Let $\Omega\subset \R^3$ be an open domain, that could be the whole space, and let $(a,b)\subset \R$ be an open interval that could be infinite. 
A generalized function or distribution defined in $\Omega\times (a,b)$ is a complex-valued continuous linear functional defined on the class of test functions denoted by $\mathcal D(\Omega\times (a,b))=C_0^\infty(\Omega\times (a,b))$, that is, the class of functions are infinitely differentiable in $\Omega\times (a,b)$ and have compact support in $\Omega\times (a,b)$. 
More precisely, $g$ is a distribution in $\Omega\times (a,b)$ if $g:\mathcal D(\Omega\times (a,b))\to \C$ is a linear function such that for each compact $K\subset \Omega\times (a,b)$ there exist a constant $C$ and an integer $k$ such that 
\begin{equation*}\label{eq:distribution estimate}
|g(\phi)|\leq C\,\sum_{|\alpha|\leq k}\sup_{x\in K}|D^\alpha\phi(x)|
\end{equation*}
for each $\phi\in \mathcal D(\Omega\times (a,b))$.
%
As customary, $\mathcal D’(\Omega\times (a,b))$ denotes the class of distributions in $\Omega\times (a,b)$. 

Numerous references are available for distributions; however, we only cite the seminal work by L. Schwartz, \cite{schwartz:theoriedesdistributions}, and also \cite{folland1999real}. 

If $g\in \mathcal D'(\Omega\times (a,b))$, then $\langle g,\phi \rangle$ denotes the value of the distribution $g$ on the test function $\phi\in \mathcal D(\Omega\times (a,b))$.
If $g$ is a locally integrable function in $\Omega\times (a,b)$, then $g$ gives rise to a distribution defined by 
\[
\langle g,\phi\rangle=\int_{\Omega\times (a,b)} g(x,t)\,\phi(x,t)\,dxdt,
\]
for each $\phi\in \mathcal D(\Omega\times (a,b))$.

We say that ${\bf G}=(G_1,G_2,G_3)$ is a vector valued distribution in $\Omega\times (a,b)$ if each component $G_i\in \mathcal D'(\Omega\times (a,b))$, $1\leq i\leq 3$.
The divergence of ${\bf G}$ with respect to $x$ is the scalar distribution defined by
\begin{equation}\label{form:divergence}
\langle \nabla \cdot {\bf G}, \phi\rangle
=-\sum_{i=1}^3 \langle G_i,\partial_{x_i} \phi\rangle,
\end{equation}
and the curl of ${\bf G}$ is the vector valued distribution in $\Omega$ defined by
\begin{equation}\label{form:curl bis}
\langle \nabla \times {\bf G},\phi\rangle=\left(\langle G_2,\phi_{x_3}\rangle-\langle G_3,\phi_{x_2}\rangle\right){\bf i}-\left(\langle G_1,\phi_{x_3}\rangle-\langle G_3,\phi_{x_1}\rangle\right){\bf j}+\left(\langle G_1,\phi_{x_2}\rangle-\langle G_2,\phi_{x_1}\rangle\right){\bf k}.
\end{equation}
Then it follows that
\begin{equation}\label{eq:div of curl zero}
\nabla \cdot (\nabla\times {\bf G})=0,
\end{equation} 
in the sense of distributions.
When the distribution ${\bf G}=(G_1,G_2,G_3)$ is given by a locally integrable function in $\Omega\times (a,b)$ we obtain from \eqref{form:divergence}, and \eqref{form:curl bis} that 
\begin{equation*}
\langle \nabla \cdot {\bf G},\phi\rangle=-\int_{\Omega\times (a,b)}{\bf G}\cdot \nabla_x \phi\,dxdt,\qquad
\langle \nabla \times {\bf G},\phi\rangle=\int_{\Omega\times (a,b)} {\bf G}\times \nabla_x \phi\,dxdt.
\end{equation*}
The derivative of $\G$ with respect to $t$ in the sense of distributions is by definition the distribution $\dfrac{\partial \G}{\partial t}$ defined by
\[
\left\langle \dfrac{\partial \G}{\partial t},\phi \right\rangle
=
-\left\langle \G, \dfrac{\partial \phi}{\partial t}\ \right\rangle,
\]
for each $\phi\in \mathcal D(\Omega\times (a,b))$.

Therefore, the fields $\D,\B,\E$ and $\H$ solve the Maxwell system in the sense of distributions if 
$\D,\B,\E$ and $\H$ are vector valued distributions in $\Omega\times (a,b)$ that satisfy the equations \eqref{divergence E zero}, \eqref{divergence B zero}, \eqref{Faraday law}, and \eqref{Ampere Maxwell} in the sense of distributions.

{\color{black}
\subsection{General formulas}\label{sec:general_formulas}
Since our application to spatio-temporal interfaces requires considering distributions represented by discontinuous functions, we need a comprehensive set of representation formulas, which will be used in Section \ref{sec:boundary conditions} to determine the appropriate boundary conditions.

Let $\Omega\subset \R^3$ be an open domain, possibly all of $\R^3$. Let $t_0\in (a,b)
\subset \R$, where $(a,b)$ may be unbounded.
Suppose $\Omega=\Omega_1\cup \Gamma \cup \Omega_2$, where the sets $\Omega_i$ are open, connected, and disjoint, and $\Gamma$ is a smooth surface separating $\Omega_1$ and $\Omega_2$.
Define a field $\G(x,t)$ for $x\in \Omega$ and $t\neq t_0$ by
\begin{equation*}
\G(x,t)
=
\begin{cases}
\G^-(x,t) &\text{for $x\in \Omega$ and $t<t_0$}\\
\G^+_1(x,t) &\text{for $x\in \Omega_1$ and $t>t_0$}\\
\G^+_2(x,t) &\text{for $x\in \Omega_2$ and $t>t_0$}
\end{cases}
\end{equation*}
and assume that it satisfies the following conditions:
%
\begin{enumerate}
\item\label{eq:G is C1} $\G^-\in C^1\(\Omega\times (a,t_0)\)$, $\G^+_i\in C^1\(\Omega_i\times (t_0,b)\)$, and $\G\in L^1_{\text{loc}}\(\Omega\times (a,b)\)$\footnote{As usual, $L^1_{\text{loc}}\(\Omega\times (a,b)\)$ denotes the class of complex-valued functions that are locally Lebesgue integrable in $\Omega\times (a,b)$.};
\item\label{eq:limits are finite} the following limits exist and are finite:
\[
\lim_{t\to t_0^-}\G^-(x,t):=\G^-(x,t_0) \quad \text{for all } x\in \Omega,
\]
\[
\lim_{t\to t_0^+}\G^+_i(x,t):=\G^+_i(x,t_0) \quad \text{for all } x\in \Omega_i,\ i=1,2,
\]
\[
\lim_{\substack{x\to \bar x\\ x\in \Omega_i}}\G^+_i(x,t):=\G^+_i(\bar x,t) \quad \text{for all } \bar x\in \Gamma,\ t>t_0,\ i=1,2,
\]
and
\[
\lim_{\substack{t\to t_0^+\\ x\to \bar x\\ x\in \Omega_i}}\G^+_i(x,t):=\G^+_i(\bar x,t_0) \quad \text{for all } \bar x\in \Gamma,\ i=1,2;
\]
\item\label{eq:derivatives if G wrt t are integrable} $\dfrac{\partial \G}{\partial t}$ and $\nabla_x \G$ are locally integrable in $\Omega\times (a,b)$.
\end{enumerate}
For $\phi\in C_0^\infty \(\Omega\times (a,b)\)$, define the linear functional
\[
\langle \G,\phi\rangle=\int_\Omega \int_a^{t_0} \G^-(x,t)\phi(x,t)\,dx\,dt
+\int_{\Omega_1} \int_{t_0}^b \G^+_1(x,t)\phi(x,t)\,dx\,dt
+\int_{\Omega_2} \int_{t_0}^b \G^+_2(x,t)\phi(x,t)\,dx\,dt
\]
This functional defines a distribution in $\Omega\times (a,b)$.



The following proposition gives a representation formula for the distributional time derivative $\dfrac{\partial \G}{\partial t}$.
\begin{prop}
\label{prop:timederivativeforGfortwomedia}
For each $\phi\in C_0^1(\Omega\times (a,b))$, we have
\begin{align*}
\left\langle \dfrac{\partial \G}{\partial t},\phi \right\rangle
&=
\int_{\Omega_1} \(\G_1^+(x,t_0^+)-\G^-(x,t_0^-)\)\phi(x,t_0)\,dx
+\int_{\Omega_2} \(\G_2^+(x,t_0^+)-\G^-(x,t_0^-)\)\phi(x,t_0)\,dx \\
&\qquad +\int_{\Omega} \int_a^{t_0} \partial_t\G^-(x,t)\phi(x,t)\,dt\,dx \\
&\qquad +\int_{\Omega_1}\int_{t_0}^b \partial_t\G_1^+(x,t)\phi(x,t)\,dt\,dx
+\int_{\Omega_2} \int_{t_0}^b \partial_t\G_2^+(x,t)\phi(x,t)\,dt\,dx.
\end{align*}
\end{prop}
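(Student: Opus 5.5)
Starting from the definition $\langle \partial_t\G,\phi\rangle=-\langle \G,\partial_t\phi\rangle$, I split the integral over $\Omega\times(a,b)$ into the three pieces that define $\langle\G,\cdot\rangle$. The surface $\Gamma$ has Lebesgue measure zero, so nothing is lost by ignoring it. The constraint $x\in\Gamma$ only involves the spatial variables, and the derivative is in $t$. Hence in each piece I fix $x$ and integrate by parts in $t$ alone; the spatial interface should play no role. This explains why no $\Gamma$-term appears in the formula.

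Concretely, for the first piece I fix $x\in\Omega$ and consider $t\mapsto \G^-(x,t)\phi(x,t)$ on $(a,t_0)$. Since $\phi$ has compact support in $\Omega\times(a,b)$, this function vanishes for $t$ near $a$. It is $C^1$ on $(a,t_0)$ by assumption \eqref{eq:G is C1}, and it has the finite limit $\G^-(x,t_0)\phi(x,t_0)$ as $t\to t_0^-$ by assumption \eqref{eq:limits are finite}. I will apply the fundamental theorem of calculus on $(a,t_0-\delta)$ and let $\delta\to0$, which gives
\[
-\int_a^{t_0}\G^-\partial_t\phi\,dt=-\G^-(x,t_0^-)\phi(x,t_0)+\int_a^{t_0}\partial_t\G^-\,\phi\,dt .
\]
The same computation on $(t_0,b)$ for fixed $x\in\Omega_i$ gives $+\G_i^+(x,t_0^+)\phi(x,t_0)+\int_{t_0}^b\partial_t\G_i^+\phi\,dt$; here the boundary term at $b$ vanishes because of the support of $\phi$. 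Next I integrate in $x$ and use $\Omega=\Omega_1\cup\Omega_2\cup\Gamma$ with $|\Gamma|=0$. This lets me split $\int_\Omega \G^-(x,t_0^-)\phi(x,t_0)\,dx$ over $\Omega_1$ and $\Omega_2$ and pair each part with the corresponding $+$ term, which produces the stated jump integrals. Fubini is used to exchange the order of integration.

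The main obstacle is justifying these manipulations, namely Fubini and the limit $\delta\to0$. For Fubini I use assumption \eqref{eq:derivatives if G wrt t are integrable}: $\partial_t\G$ is locally integrable, and $\phi$ has compact support, so the double integrals are absolutely convergent and the $x$ and $t$ integrations can be interchanged. For the pointwise identity, the limit $\delta\to0$ on the left-hand side follows from dominated convergence, since $\G\,\partial_t\phi\in L^1$. On the right-hand side, $\int_a^{t_0-\delta}\partial_t\G^-\phi\,dt$ converges for a.e. $x$ because $\partial_t\G^-(x,\cdot)\phi(x,\cdot)\in L^1$ for a.e. $x$ by Fubini. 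The boundary term converges by assumption \eqref{eq:limits are finite}.

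One point needs care: the jump integrals $\int_{\Omega_i}(\G_i^+-\G^-)(x,t_0)\phi(x,t_0)\,dx$ must be finite. They are, because they equal the difference of two absolutely convergent integrals; alternatively they can be bounded using local boundedness of the traces. Finally, the proposition is stated for $\phi\in C^1_0$ rather than $C^\infty_0$. Only one $t$-derivative of $\phi$ enters, and the definition $-\langle\G,\partial_t\phi\rangle$ makes sense for $C^1_0$ test functions, so the same argument applies verbatim.
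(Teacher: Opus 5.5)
Your proposal is correct and follows essentially the same route as the paper. Both arguments integrate by parts in $t$ for each fixed $x$ on $(a,t_0)$ and on $(t_0,b)$, drop the endpoint terms at $a$ and $b$ using the compact support of $\phi$, and split $\int_\Omega \G^-(x,t_0^-)\phi(x,t_0)\,dx$ over $\Omega_1$ and $\Omega_2$ to pair it with the $+$ traces. Your added justifications (Fubini, the limit $\delta\to 0$, integrability of the traces as a difference of integrable functions of $x$, and $|\Gamma|=0$) make explicit what the paper leaves implicit; only your aside about ``local boundedness of the traces'' is not supplied by the hypotheses, and your main argument does not need it.
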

\begin{proof}
By definition,
\[
\langle \G,\phi_t\rangle=\int_\Omega \int_a^{t_0} \G^-(x,t)\phi_t(x,t)\,dx\,dt
+\int_{\Omega_1} \int_{t_0}^b \G^+_1(x,t)\phi_t(x,t)\,dx\,dt
+\int_{\Omega_2} \int_{t_0}^b \G^+_2(x,t)\phi_t(x,t)\,dx\,dt.
\]
By assumptions \eqref{eq:G is C1} and \eqref{eq:derivatives if G wrt t are integrable}, integration by parts in $t$ gives, for each fixed $x$,
\begin{align*}
\int_a^{t_0} \G^-(x,t)\phi_t(x,t)\,dt
&=
\G^-(x,t)\phi(x,t)|_{t=a}^{t=t_0^-}-\int_a^{t_0} \partial_t\G^-(x,t)\phi(x,t)\,dt\\
&=
\G^-(x,t_0^-)\phi(x,t_0)-\int_a^{t_0} \partial_t\G^-(x,t)\phi(x,t)\,dt,
\end{align*}
\begin{align*}
\int_{t_0}^b \G_1^+(x,t)\phi_t(x,t)\,dt
&=
\G_1^+(x,t)\phi(x,t)|_{t_0^+}^{b}-\int_{t_0}^b \partial_t\G_1^+(x,t)\phi(x,t)\,dt\\
&=
-\G_1^+(x,t_0^+)\phi(x,t_0)-\int_{t_0}^b \partial_t\G_1^+(x,t)\phi(x,t)\,dt,
\end{align*}
and
\begin{align*}
\int_{t_0}^b \G_2^+(x,t)\phi_t(x,t)\,dt
&=
\G_2^+(x,t)\phi(x,t)|_{t_0^+}^{b}-\int_{t_0}^b \partial_t\G_2^+(x,t)\phi(x,t)\,dt\\
&=
-\G_2^+(x,t_0^+)\phi(x,t_0)-\int_{t_0}^b \partial_t\G_2^+(x,t)\phi(x,t)\,dt.
\end{align*}
Here the endpoint terms at $a$ and $b$ vanish because $\phi$ has compact support in $\Omega\times(a,b)$. Therefore,
\begin{align*}
\langle \G,\phi_t\rangle
&=\int_\Omega \int_a^{t_0} \G^-(x,t)\phi_t(x,t)\,dx\,dt
+\int_{\Omega_1} \int_{t_0}^b \G^+_1(x,t)\phi_t(x,t)\,dx\,dt
+\int_{\Omega_2} \int_{t_0}^b \G^+_2(x,t)\phi_t(x,t)\,dx\,dt\\
&=\int_\Omega \(\G^-(x,t_0^-)\phi(x,t_0)-\int_a^{t_0} \partial_t\G^-(x,t)\phi(x,t)\,dt\)\,dx\\
&\qquad +\int_{\Omega_1} \(-\G_1^+(x,t_0^+)\phi(x,t_0)-\int_{t_0}^b \partial_t\G_1^+(x,t)\phi(x,t)\,dt\)\,dx\\
&\qquad \qquad 
+\int_{\Omega_2} \(-\G_2^+(x,t_0^+)\phi(x,t_0)-\int_{t_0}^b \partial_t\G_2^+(x,t)\phi(x,t)\,dt\)\,dx\\
&=\int_{\Omega} \G^-(x,t_0^-)\phi(x,t_0)\,dx-\int_{\Omega} \int_a^{t_0} \partial_t\G^-(x,t)\phi(x,t)\,dt\,dx\\
&\qquad +\int_{\Omega_1} \(-\G_1^+(x,t_0^+)\phi(x,t_0)-\int_{t_0}^b \partial_t\G_1^+(x,t)\phi(x,t)\,dt\)\,dx\\
&\qquad \qquad 
+\int_{\Omega_2} \(-\G_2^+(x,t_0^+)\phi(x,t_0)-\int_{t_0}^b \partial_t\G_2^+(x,t)\phi(x,t)\,dt\)\,dx\\
&=\int_{\Omega_1} \G^-(x,t_0^-)\phi(x,t_0)\,dx +\int_{\Omega_2} \G^-(x,t_0^-)\phi(x,t_0)\,dx -\int_{\Omega} \int_a^{t_0} \partial_t\G^-(x,t)\phi(x,t)\,dt\,dx\\
&\qquad +\int_{\Omega_1} \(-\G_1^+(x,t_0^+)\phi(x,t_0)-\int_{t_0}^b \partial_t\G_1^+(x,t)\phi(x,t)\,dt\)\,dx\\
&\qquad \qquad 
+\int_{\Omega_2} \(-\G_2^+(x,t_0^+)\phi(x,t_0)-\int_{t_0}^b \partial_t\G_2^+(x,t)\phi(x,t)\,dt\)\,dx\\
&=\int_{\Omega_1} \(\G^-(x,t_0^-)-\G_1^+(x,t_0^+)\)\phi(x,t_0)\,dx\\
&\qquad  +\int_{\Omega_2} \(\G^-(x,t_0^-)-\G_2^+(x,t_0^+)\)\phi(x,t_0)\,dx -\int_{\Omega} \int_a^{t_0} \partial_t\G^-(x,t)\phi(x,t)\,dt\,dx\\
&\qquad -\int_{\Omega_1}\int_{t_0}^b \partial_t\G_1^+(x,t)\phi(x,t)\,dt\,dx
-\int_{\Omega_2} \int_{t_0}^b \partial_t\G_2^+(x,t)\phi(x,t)\,dt\,dx.
\end{align*}
Since $\left\langle \partial_t\G,\phi\right\rangle=-\langle \G,\phi_t\rangle$, this gives the desired formula.

\end{proof}


The next proposition gives a representation formula for the distributional divergence of $\G$.
\begin{prop}\label{prop:divergence_formula}
For each $\phi\in C_0^1(\Omega\times (a,b))$, we have
\begin{align*}
\left\langle \nabla_x \cdot \G,\phi \right\rangle
&=
\int_{t_0}^b  \int_{\Gamma} \phi(x,t) \(\G_2^+(x,t)-\G_1^+(x,t)\) \cdot \mathbf{n}(x)\,d\sigma(x)\,dt
+\int_a^{t_0} \int_{\Omega} \phi\, \nabla_x \cdot \G^-\,dx\,dt\\
&\qquad \qquad +\int_{t_0}^b \int_{\Omega_1} \phi\, \nabla_x \cdot \G_1^+\,dx\,dt
+\int_{t_0}^b \int_{\Omega_2} \phi\, \nabla_x \cdot \G_2^+\,dx\,dt,
\end{align*}
where, for $x\in \Gamma$ and $t>t_0$,
\[
\G_2^+(x,t)-\G_1^+(x,t)
=\lim_{\substack{y\to x\\ y\in \Omega_2}}\G^+_2(y,t)-\lim_{\substack{y\to x\\ y\in \Omega_1}}\G^+_1(y,t),
\]
and $\mathbf{n}(x)$ denotes the unit normal to $\Gamma$ at $x$, oriented from $\Omega_1$ to $\Omega_2$.
\end{prop}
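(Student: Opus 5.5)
By the definition \eqref{form:divergence}, $\langle \nabla_x\cdot\G,\phi\rangle=-\int \G\cdot\nabla_x\phi\,dx\,dt$. The plan is to split this integral along the three pieces of the space-time decomposition, apply Fubini so that spatial integrals are taken at fixed $t$, and then use the divergence theorem in each spatial region. The argument parallels the proof of Proposition \ref{prop:timederivativeforGfortwomedia}, with the roles of $t$ and $x$ exchanged. Concretely, I write
\[
-\langle \G,\nabla_x\phi\rangle=-\int_a^{t_0}\!\int_\Omega \G^-\cdot\nabla_x\phi\,dx\,dt-\sum_{i=1}^2\int_{t_0}^b\!\int_{\Omega_i}\G_i^+\cdot\nabla_x\phi\,dx\,dt .
\]
Fubini is justified because $\G,\nabla_x\G\in L^1_{\rm loc}$ by assumptions \eqref{eq:G is C1} and \eqref{eq:derivatives if G wrt t are integrable}, and $\phi$ has compact support.

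\textbf{Case $t<t_0$.} For fixed $t$, the function $\G^-(\cdot,t)$ is $C^1$ on all of $\Omega$ and $\phi(\cdot,t)$ has compact support in $\Omega$. The identity $\nabla\cdot(\phi\G^-)=\phi\,\nabla\cdot\G^-+\G^-\cdot\nabla\phi$, integrated over $\Omega$, produces no boundary term. Hence $-\int_\Omega\G^-\cdot\nabla_x\phi\,dx=\int_\Omega\phi\,\nabla_x\cdot\G^-\,dx$. The temporal interface contributes nothing here, because no $t$-derivative is involved.

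\textbf{Case $t>t_0$.} For fixed $t$, I apply the Gauss divergence theorem to $\phi\G_i^+$ on $\Omega_i\cap \operatorname{supp}\phi(\cdot,t)$. The only boundary that survives is the part of $\Gamma$. The outward normal of $\Omega_1$ on $\Gamma$ is $\mathbf n$, and that of $\Omega_2$ is $-\mathbf n$. This gives
\[
-\int_{\Omega_1}\G_1^+\cdot\nabla\phi\,dx=\int_{\Omega_1}\phi\,\nabla\cdot\G_1^+\,dx-\int_\Gamma\phi\,\G_1^+\cdot\mathbf n\,d\sigma,
\]
and likewise $-\int_{\Omega_2}\G_2^+\cdot\nabla\phi\,dx=\int_{\Omega_2}\phi\,\nabla\cdot\G_2^+\,dx+\int_\Gamma\phi\,\G_2^+\cdot\mathbf n\,d\sigma$. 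In these formulas the traces on $\Gamma$ are the one-sided limits guaranteed by assumption \eqref{eq:limits are finite}. Summing the two identities and then integrating in $t$ over $(t_0,b)$ and over $(a,t_0)$ yields exactly the stated formula, with the jump $(\G_2^+-\G_1^+)\cdot\mathbf n$ on $\Gamma$.

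\textbf{Main obstacle.} The delicate step is justifying the divergence theorem on $\Omega_i$, because $\G_i^+$ is only $C^1$ in the open set and has boundary limits on $\Gamma$, not $C^1$ up to $\Gamma$. I plan to handle this by approximation. I would apply the theorem on the shrunken domains $\Omega_i^\delta=\{x\in\Omega_i:\operatorname{dist}(x,\Gamma)>\delta\}$, whose boundaries are the parallel surfaces $\Gamma\pm\delta\mathbf n$ (smooth near $\operatorname{supp}\phi$ for small $\delta$, since $\Gamma$ is smooth). I would then let $\delta\to0$. The volume terms converge by local integrability of $\nabla_x\cdot\G_i^+$ and dominated convergence. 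The boundary terms converge to the $\Gamma$ integrals using the existence of the one-sided limits in assumption \eqref{eq:limits are finite}, assuming these limits are locally uniform or locally bounded so that dominated convergence applies on the compact set $\Gamma\cap\operatorname{supp}\phi$. Only locally integrable derivatives are needed afterwards, so the exceptional time slice $t=t_0$ is negligible, having measure zero in $t$.
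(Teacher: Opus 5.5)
Your proposal is correct and follows the same route as the paper's proof. Both arguments split the pairing over the three space-time pieces, apply the divergence theorem at each fixed $t\neq t_0$ (outward normal $\mathbf n$ for $\Omega_1$, $-\mathbf n$ for $\Omega_2$), and sum to obtain the jump term $(\G_2^+-\G_1^+)\cdot\mathbf n$. Your shrinking-domain justification is extra rigor the paper omits. The local boundedness you hedge on needs no extra assumption: the existence of $\lim_{y\to x,\,y\in\Omega_i}\G_i^+(y,t)$ at every $x\in\Gamma$ already makes $\G_i^+(\cdot,t)$ extend continuously to the relatively closed set $\Omega_i\cup\Gamma$, so it is bounded and uniformly convergent to its trace on compact pieces near $\Gamma$.
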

\begin{proof}
Recall that
\[
\langle \G,\phi\rangle=\int_\Omega \int_a^{t_0} \G^-(x,t)\phi(x,t)\,dx\,dt
+\int_{\Omega_1} \int_{t_0}^b \G^+_1(x,t)\phi(x,t)\,dx\,dt
+\int_{\Omega_2} \int_{t_0}^b \G^+_2(x,t)\phi(x,t)\,dx\,dt,
\]
and, by the definition of distributional divergence,
\begin{align*}
\left\langle \nabla_x \cdot \G,\phi \right\rangle
&=
-
\sum_{i=1}^3
\int_\Omega \int_a^{t_0} G_i^-(x,t)\,\dfrac{\partial \phi(x,t)}{\partial x_i}\,dx\,dt\\
&\qquad -\sum_{i=1}^3
\int_{\Omega_1} \int_{t_0}^b G_{i 1}^+(x,t)\,\dfrac{\partial \phi(x,t)}{\partial x_i}\,dx\,dt-\sum_{i=1}^3
\int_{\Omega_2} \int_{t_0}^b G_{i 2}^+(x,t)\,\dfrac{\partial \phi(x,t)}{\partial x_i}\,dx\,dt\\
&=-
\int_\Omega \int_a^{t_0} \G^-(x,t)\cdot \nabla_x \phi(x,t)\,dx\,dt
-\int_{\Omega_1} \int_{t_0}^b \G_1^+(x,t)\cdot \nabla_x \phi(x,t)\,dx\,dt\\
&\qquad 
-\int_{\Omega_2} \int_{t_0}^b \G_2^+(x,t)\cdot \nabla_x \phi(x,t)\,dx\,dt\\
&=-
\int_a^{t_0} \int_\Omega  \G^-(x,t)\cdot \nabla_x \phi(x,t)\,dx\,dt
-\int_{t_0}^b \int_{\Omega_1}  \G_1^+(x,t)\cdot \nabla_x \phi(x,t)\,dx\,dt\\
&\qquad 
-\int_{t_0}^b\int_{\Omega_2}  \G_2^+(x,t)\cdot \nabla_x \phi(x,t)\,dx\,dt.
\end{align*}
Using the divergence theorem for each fixed $t\neq t_0$ yields
\begin{align*}
\int_\Omega  \G^-(x,t)\cdot \nabla_x \phi(x,t)\,dx
&=
\int_\Omega \(\nabla_x\cdot  (\phi \G^-) - \phi \nabla_x \cdot \G^-\)\,dx\\
&=
\int_{\partial \Omega} \phi \G^-\cdot \mathbf{n}\,d\sigma
-\int_{\Omega} \phi \nabla_x \cdot \G^-\,dx
=-\int_{\Omega} \phi \nabla_x \cdot \G^-\,dx,
\end{align*}
since $\phi(\cdot,t)$ has compact support contained in $\Omega$.
Similarly,
\begin{align*}
\int_{\Omega_1}  \G_1^+(x,t)\cdot \nabla_x \phi(x,t)\,dx
&=
\int_{\Omega_1} \(\nabla_x\cdot  (\phi \G_1^+) - \phi \nabla_x \cdot \G_1^+\)\,dx\\
&=
\int_{\Gamma} \phi \G_1^+\cdot \mathbf{n}_1\,d\sigma
-\int_{\Omega_1} \phi \nabla_x \cdot \G_1^+\,dx
\end{align*}
where $\mathbf{n}_1$ denotes the unit normal to $\Gamma$ oriented from $\Omega_1$ to $\Omega_2$,
and
\begin{align*}
\int_{\Omega_2}  \G_2^+(x,t)\cdot \nabla_x \phi(x,t)\,dx
&=
\int_{\Omega_2} \(\nabla_x\cdot  (\phi \G_2^+) - \phi \nabla_x \cdot \G_2^+\)\,dx\\
&=
\int_{\Gamma} \phi \G_2^+\cdot \mathbf{n}_2\,d\sigma
-\int_{\Omega_2} \phi \nabla_x \cdot \G_2^+\,dx,
\end{align*}
where $\mathbf{n}_2$ denotes the unit normal to $\Gamma$ oriented from $\Omega_2$ to $\Omega_1$; hence $\mathbf{n}_2=-\mathbf{n}_1$.
Substituting these expressions into the formula for $\left\langle \nabla_x \cdot \G,\phi \right\rangle$, we obtain
\begin{align*}
\left\langle \nabla_x \cdot \G,\phi \right\rangle
&=
-
\int_a^{t_0} \int_\Omega  \G^-(x,t)\cdot \nabla_x \phi(x,t)\,dx\,dt
-\int_{t_0}^b \int_{\Omega_1}  \G_1^+(x,t)\cdot \nabla_x \phi(x,t)\,dx\,dt\\
&\qquad 
-\int_{t_0}^b\int_{\Omega_2}  \G_2^+(x,t)\cdot \nabla_x \phi(x,t)\,dx\,dt\\
&=
\int_a^{t_0} \int_{\Omega} \phi\, \nabla_x \cdot \G^-\,dx\,dt
+\int_{t_0}^b  \(\int_{\Gamma} \phi \(\G_2^+ -\G_1^+\) \cdot \mathbf{n}_1\,d\sigma
+\int_{\Omega_1} \phi \nabla_x \cdot \G_1^+\,dx\)dt\\
&\qquad 
+\int_{t_0}^b\(
\int_{\Omega_2} \phi \nabla_x \cdot \G_2^+\,dx\)dt\\
&=
\int_{t_0}^b  \int_{\Gamma} \phi \(\G_2^+ -\G_1^+\) \cdot \mathbf{n}_1\,d\sigma\,dt
+\int_a^{t_0} \int_{\Omega} \phi\, \nabla_x \cdot \G^-\,dx\,dt\\
&\qquad +\int_{t_0}^b \int_{\Omega_1} \phi\, \nabla_x \cdot \G_1^+\,dx\,dt
+\int_{t_0}^b
\int_{\Omega_2} \phi\, \nabla_x \cdot \G_2^+\,dx\,dt.
\end{align*}
This proves the desired formula.
\end{proof}
\begin{prop}\label{prop:formulaforcurlforgeneralmedia}
For each $\phi\in C_0^1(\Omega\times (a,b))$, we have the following representation formula for the distributional curl of $\G$:
\begin{align*}
\langle \nabla \times \G, \phi \rangle
&=
-\int_{t_0}^b \int_\Gamma \(\(\G_2^+-\G_1^+\)\times \mathbf{n}\)\,\phi\,d\sigma\,dt
+\int_a^{t_0} \int_{\Omega}\phi\,\nabla \times \G^- \,dx\,dt\\
&\qquad +\int_{t_0}^b\int_{\Omega_1}\phi\,\nabla \times\G_1^+ \,dx\,dt
+ \int_{t_0}^b\int_{\Omega_2}\phi\,\nabla \times\G_2^+ \,dx\,dt,
\end{align*}
where, for $x\in \Gamma$ and $t>t_0$,
\[
\G_2^+(x,t)-\G_1^+(x,t)
=\lim_{\substack{y\to x\\ y\in \Omega_2}}\G^+_2(y,t)-\lim_{\substack{y\to x\\ y\in \Omega_1}}\G^+_1(y,t),
\]
and $\mathbf{n}(x)$ denotes the unit normal to $\Gamma$ at $x$, oriented from $\Omega_1$ to $\Omega_2$.
\end{prop}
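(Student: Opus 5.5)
I will follow the proof of Proposition \ref{prop:divergence_formula} closely, replacing the divergence theorem by its curl analogue. Starting from the definition \eqref{form:curl bis} and the locally integrable representation, I write
\[
\langle \nabla\times\G,\phi\rangle=\int_a^{t_0}\int_\Omega \G^-\times\nabla_x\phi\,dx\,dt+\int_{t_0}^b\int_{\Omega_1}\G_1^+\times\nabla_x\phi\,dx\,dt+\int_{t_0}^b\int_{\Omega_2}\G_2^+\times\nabla_x\phi\,dx\,dt .
\]
The sign convention here should be checked componentwise against \eqref{form:curl bis}. For instance, the ${\bf i}$-component is $\langle G_2,\phi_{x_3}\rangle-\langle G_3,\phi_{x_2}\rangle$, which is exactly $(\G\times\nabla\phi)_1$. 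I will also fix $t\neq t_0$ and work slice by slice, using Fubini, which is justified by the local integrability assumptions \eqref{eq:G is C1} and \eqref{eq:derivatives if G wrt t are integrable}.

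The key pointwise identity is $\nabla\times(\phi\G)=\phi\,\nabla\times\G+\nabla\phi\times\G$. It gives
\[
\G\times\nabla\phi=\phi\,\nabla\times\G-\nabla\times(\phi\G).
\]
For the curl of a $C^1$ field on a region $U$ with outward unit normal $\nu$, the divergence theorem applied componentwise yields $\int_U\nabla\times\mathbf F\,dx=\int_{\partial U}\nu\times\mathbf F\,d\sigma$. I will verify this by applying the divergence theorem to $\mathbf F\times \mathbf e_j$, or equivalently by noting that $\nabla\times\mathbf F=\sum_i \mathbf e_i\times\partial_i\mathbf F$.

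Applying this on each piece gives the following.
\begin{itemize}
\item On $\Omega$ for $t<t_0$, the boundary term vanishes because $\phi(\cdot,t)$ has compact support in $\Omega$.
\item On $\Omega_1$, the outward normal on $\Gamma$ is $\mathbf n$, so the boundary term is $-\int_\Gamma \mathbf n\times\phi\G_1^+\,d\sigma$.
\item On $\Omega_2$, the outward normal is $-\mathbf n$, so the boundary term is $+\int_\Gamma\mathbf n\times\phi\G_2^+\,d\sigma$.
\end{itemize}
The only contribution from the whole of $\partial\Omega_i$ is along $\Gamma$, again by compact support.

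Combining these, the interface contributions add up to $\int_\Gamma \phi\,\mathbf n\times(\G_2^+-\G_1^+)\,d\sigma$, which equals $-\int_\Gamma\phi\,(\G_2^+-\G_1^+)\times\mathbf n\,d\sigma$. Integrating in $t$ over $(t_0,b)$ gives the stated surface term, and the volume terms $\phi\,\nabla\times\G^-$ and $\phi\,\nabla\times\G_i^+$ appear directly.

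Unlike Proposition \ref{prop:timederivativeforGfortwomedia}, no $t=t_0$ term arises, since only $x$-derivatives are moved off $\phi$.

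The main obstacle is justifying the divergence theorem on $\Omega_i$ up to $\Gamma$ when $\G_i^+$ is only $C^1$ in the open set and has boundary limits by assumption \eqref{eq:limits are finite}. I plan to handle this as in Proposition \ref{prop:divergence_formula}: apply the theorem on $\Omega_i$ minus a thin collar of $\Gamma$, then let the collar shrink, using the existence of the traces and the integrability of $\nabla_x\G$ to pass to the limit. The remaining care is simply keeping track of the orientation and sign of the cross product.
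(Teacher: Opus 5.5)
Your proposal is correct and is essentially the paper's argument. For each fixed $t$, the paper applies the divergence theorem componentwise on each piece to get $\int_{\Omega_1}\G_1^+\times\nabla\phi\,dx=\int_\Gamma(\G_1^+\times\mathbf{n})\,\phi\,d\sigma+\int_{\Omega_1}\phi\,\nabla\times\G_1^+\,dx$, and the analogue on $\Omega_2$ with normal $-\mathbf{n}$. That is your identity $\G\times\nabla\phi=\phi\,\nabla\times\G-\nabla\times(\phi\G)$ combined with $\int_U\nabla\times\mathbf F\,dx=\int_{\partial U}\nu\times\mathbf F\,d\sigma$, written out in coordinates, and your signs agree with the paper's. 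The only real addition is your collar/trace argument for applying the divergence theorem up to $\Gamma$. The paper, including its proof of Proposition~\ref{prop:divergence_formula}, does not make that argument; it simply takes the divergence theorem up to $\Gamma$ for granted.
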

\begin{proof}
By assumption \eqref{eq:G is C1}, $\G$ is locally integrable in $\Omega\times (a,b)$. Hence, from \eqref{form:curl bis}, we can write
\begin{align*}
\langle \nabla \times \G, \phi \rangle
&=
\int_{\Omega\times (a,b)} \G\times \nabla \phi\,dx\,dt\\
&=
\int_\Omega \int_a^{t_0} \G^-(x,t)\times \nabla \phi(x,t)\,dx\,dt
+\int_{\Omega_1} \int_{t_0}^b \G^+_1(x,t)\times \nabla \phi(x,t)\,dx\,dt
+\int_{\Omega_2} \int_{t_0}^b \G^+_2(x,t)\times \nabla \phi(x,t)\,dx\,dt\\
&=A+B+C
\end{align*}
for each $\phi\in C^1_0\(\Omega\times (a,b)\)$.

We first compute $B$. Set $\G_1^+=\(G_{1,1}^+,G_{1,2}^+,G_{1,3}^+\)$. 
For fixed $t>t_0$, we write  
\begin{align*}
&\int_{\Omega_1}  \G^+_1(x,t)\times \nabla \phi(x,t)\,dx\\
&=
\left(\langle G_{1,2}^+,\phi_{x_3}\rangle-\langle G_{1,3}^+,\phi_{x_2}\rangle\right)\mathbf{i}
-\left(\langle G_{1,1}^+,\phi_{x_3}\rangle-\langle G_{1,3}^+,\phi_{x_1}\rangle\right)\mathbf{j}
+\left(\langle G_{1,1}^+,\phi_{x_2}\rangle-\langle G_{1,2}^+,\phi_{x_1}\rangle\right)\mathbf{k}.
\end{align*}
Now, for $i\neq j$,
\begin{align*}
\langle G_{1,i}^+,\phi_{x_j}\rangle-\langle G_{1,j}^+,\phi_{x_i}\rangle
&=
\int_{\Omega_1}
\(G_{1,i}^+(x,t)\phi_{x_j}(x,t)-G_{1,j}^+(x,t)\phi_{x_i}(x,t)\)\,dx\\
&=\int_{\Omega_1}
G_{1,i}^+(x,t)\phi_{x_j}(x,t)\,dx-\int_{\Omega_1}G_{1,j}^+(x,t)\phi_{x_i}(x,t)\,dx\\
&=
\int_{\Omega_1}
\(\(G_{1,i}^+\phi\)_{x_j}- \phi\,\(G_{1,i}^+\)_{x_j}\) \,dx- \int_{\Omega_1}\(\(G_{1,j}^+\phi\)_{x_i}- \phi\,\(G_{1,j}^+\)_{x_i}\) \,dx\\
&=
\int_{\Omega_1}
\(G_{1,i}^+\phi\)_{x_j}\,dx- \int_{\Omega_1}\phi\,\(G_{1,i}^+\)_{x_j} \,dx- 
\int_{\Omega_1}\(G_{1,j}^+\phi\)_{x_i}\,dx+ \int_{\Omega_1}\phi\,\(G_{1,j}^+\)_{x_i} \,dx\\
&=
\int_\Gamma \(G_{1,i}^+ \,n_j- G_{1,j}^+ \,n_i\)\,\phi\,d\sigma
+
\int_{\Omega_1}\phi\,\(\(G_{1,j}^+\)_{x_i}-\(G_{1,i}^+\)_{x_j}\) \,dx,
\end{align*}
by the divergence theorem, where $n_i$ are the components of $\mathbf{n}(x)$, the outer normal to $\Gamma$ oriented from $\Omega_1$ to $\Omega_2$, and $\G_1^+(x,t)=\lim_{\substack{y\to x\\ y\in \Omega_1}} \G_1^+(y,t)$ for $x\in \Gamma$.
Hence,
\begin{align*}
\int_{\Omega_1}  \G^+_1(x,t)\times \nabla \phi(x,t)\,dx
=
\int_\Gamma \(\G_1^+\times \mathbf{n}\)\,\phi\,d\sigma
+
\int_{\Omega_1}\phi\,\nabla \times\G_1^+ \,dx.
\end{align*}
The calculation for $C$ is analogous. For $\G_2^+$, we obtain
\begin{align*}
\int_{\Omega_2}  \G^+_2(x,t)\times \nabla \phi(x,t)\,dx
=
\int_\Gamma \(\G_2^+\times \mathbf{n}'\)\,\phi\,d\sigma
+
\int_{\Omega_2}\phi\,\nabla \times\G_2^+ \,dx
\end{align*}
where $\mathbf{n}'$ is the outer normal to $\Gamma$ oriented from $\Omega_2$ to $\Omega_1$, and $\G_2^+(x,t)=\lim_{\substack{y\to x\\ y\in \Omega_2}} \G_2^+(y,t)$ for $x\in \Gamma$.
Since $\mathbf{n}'=-\mathbf{n}$, we obtain 
\begin{align*}
B+C
&=
-\int_{t_0}^b \int_\Gamma \(\(\G_2^+-\G_1^+\)\times \mathbf{n}\)\,\phi\,d\sigma\,dt
+
\int_{t_0}^b\int_{\Omega_1}\phi\,\nabla \times\G_1^+ \,dx\,dt+ \int_{t_0}^b\int_{\Omega_2}\phi\,\nabla \times\G_2^+ \,dx\,dt.
\end{align*}
Similarly, we obtain the following expression for $A$:
\begin{align*}
A=\int_a^{t_0} \int_{\Omega}\phi\,\nabla \times\G^- \,dx\,dt
\end{align*}
since $\phi$ has compact support in $\Omega\times (a,b)$.
\end{proof}
}

\setcounter{equation}{0}
\section{Boundary conditions for spatio-temporal media}\label{sec:boundary conditions}


{\color{black}
We consider material parameters $\e$ and $\mu$ defined by
\begin{equation}\label{eps-mu-piecewise-def}
\e(x,t)
=
\begin{cases}
\e^-(x,t) &\text{for $x\in \Omega$ and $t<t_0$}\\
\e^+_1(x,t) &\text{for $x\in \Omega_1$ and $t>t_0$}\\
\e^+_2(x,t) &\text{for $x\in \Omega_2$ and $t>t_0$}
\end{cases}
\qquad 
\mu(x,t)
=
\begin{cases}
\mu^-(x,t) &\text{for $x\in \Omega$ and $t<t_0$}\\
\mu^+_1(x,t) &\text{for $x\in \Omega_1$ and $t>t_0$}\\
\mu^+_2(x,t) &\text{for $x\in \Omega_2$ and $t>t_0$.}
\end{cases}
\end{equation}
We assume that $\e$ satisfies the following conditions:
%
\begin{enumerate}
\item\label{eq:e is C1} $\e^-\in C^1\(\Omega\times (a,t_0)\)$, $\e^+_i\in C^1\(\Omega_i\times (t_0,b)\)$, and $\e\in L^1_{\text{loc}}\(\Omega\times (a,b)\)$\footnote{As usual, $L^1_{\text{loc}}\(\Omega\times (a,b)\)$ denotes the class of complex-valued functions that are locally Lebesgue integrable in $\Omega\times (a,b)$.}, $i=1,2$;
\item\label{eq:e-limits-are-finite} the following limits exist and are finite:
\[
\lim_{t\to t_0^-}\e^-(x,t):=\e^-(x,t_0) \quad \text{for all } x\in \Omega,
\]
\[
\lim_{t\to t_0^+}\e^+_i(x,t):=\e^+_i(x,t_0) \quad \text{for all } x\in \Omega_i,\ i=1,2,
\]
\[
\lim_{\substack{x\to y\\ x\in \Omega_i}}\e^+_i(x,t):=\e^+_i(y,t) \quad \text{for all } y\in \Gamma,\ t>t_0,\ i=1,2,
\]
and
\[
\lim_{\substack{t\to t_0^+\\ x\to y\\ x\in \Omega_i}}\e^+_i(x,t):=\e^+_i(y,t_0) \quad \text{for all } y\in \Gamma,\ i=1,2;
\]
\item\label{eq:e-derivatives-integrable} $\dfrac{\partial \e}{\partial t}$ and $\nabla_x \e$ are locally integrable in $\Omega\times (a,b)$;
\end{enumerate}
with analogous conditions imposed on $\mu$.
We assume that the electric field $\E$ has the form
\begin{equation*}
\E=
\begin{cases}
\E^- & \text{for $t<t_0$ and $x\in \Omega$}\\
\E^+_1 & \text{for $t>t_0$ and $x\in \Omega_1$}\\
\E^+_2 & \text{for $t>t_0$ and $x\in \Omega_2$}
\end{cases}
\end{equation*}
The magnetic field $\H$ is assumed to have an analogous form. 
For $y\in \Gamma$, we define the traces
\begin{align*}
\E_1^+(y,t)&=\lim_{\substack{x\to y\\ x\in \Omega_1}}\E_1^+(x,t),\\
\E_2^+(y,t)&=\lim_{\substack{x\to y\\ x\in \Omega_2}}\E_2^+(x,t)
\end{align*} 
for $t>t_0$. The fields $\H_1^+(y,t)$ and $\H_2^+(y,t)$ are defined similarly.
Next, set
\[
\G
=
\begin{cases}
\G^-(x,t)=\e^-(x,t)\E^-(x,t) & \text{for $x\in \Omega$ and $a<t<t_0$}\\
\G^+_1(x,t)=\e^+_1(x,t)\E^+_1(x,t) & \text{for $x\in \Omega_1$ and $t_0<t<b$}\\
\G^+_2(x,t)=\e^+_2(x,t)\E^+_2(x,t) & \text{for $x\in \Omega_2$ and $t_0<t<b$}.
\end{cases}
\]
Applying Proposition~\ref{prop:timederivativeforGfortwomedia} to the field $\G$ defined above gives
\begin{align}\label{eq:formula for derivative with respect to t of E}
\left\langle \dfrac{\partial \G}{\partial t},\phi \right\rangle
&=
\int_{\Omega_1} \(\G_1^+(x,t_0^+)-\G^-(x,t_0^-)\)\phi(x,t_0)\,dx
+\int_{\Omega_2} \(\G_2^+(x,t_0^+)-\G^-(x,t_0^-)\)\phi(x,t_0)\,dx \\
&\qquad +\int_{\Omega} \int_a^{t_0} \partial_t\G^-(x,t)\phi(x,t)\,dt\,dx
+\int_{\Omega_1}\int_{t_0}^b \partial_t\G_1^+(x,t)\phi(x,t)\,dt\,dx\notag\\
&\qquad +\int_{\Omega_2} \int_{t_0}^b \partial_t\G_2^+(x,t)\phi(x,t)\,dt\,dx\notag\\
&=\int_{\Omega_1} \(\e^+_1(x,t_0^+)\E_1^+(x,t_0^+)-\e^-(x,t_0^-)\E^-(x,t_0^-)\)\phi(x,t_0)\,dx\notag\\
&\qquad \qquad
+\int_{\Omega_2} \(\e^+_2(x,t_0^+)\E_2^+(x,t_0^+)-\e^-(x,t_0^-)\E^-(x,t_0^-)\)\phi(x,t_0)\,dx \notag\\
&\qquad +\int_{\Omega} \int_a^{t_0} \partial_t\(\e^-\E^-(x,t)\)\phi(x,t)\,dt\,dx 
+\int_{\Omega_1}\int_{t_0}^b \partial_t\(\e^+_1\E_1^+(x,t)\)\phi(x,t)\,dt\,dx\notag\\
&\qquad \qquad \qquad 
+\int_{\Omega_2} \int_{t_0}^b \partial_t\(\e^+_2\E_2^+(x,t)\)\phi(x,t)\,dt\,dx.\notag
\end{align}

On the other hand, applying Proposition~\ref{prop:formulaforcurlforgeneralmedia} with $\G=\H$ yields
\begin{align}
\langle \nabla \times \H, \phi \rangle
&=
-\int_{t_0}^b \int_\Gamma \(\(\H_2^+-\H_1^+\)\times \mathbf{n}\)\,\phi\,d\sigma\,dt
+\int_a^{t_0} \int_{\Omega}\phi\,\nabla \times \H^- \,dx\,dt\notag\\
&\qquad +\int_{t_0}^b\int_{\Omega_1}\phi\,\nabla \times\H_1^+ \,dx\,dt
+ \int_{t_0}^b\int_{\Omega_2}\phi\,\nabla \times\H_2^+ \,dx\,dt.\label{eq:formula for curl of H}
\end{align}
Assume that $\E$ and $\H$ satisfy the Maxwell equation  
\begin{equation}\label{eq:maxwell equation Faraday-latest}
\nabla \times \H=\dfrac{1}{c} \dfrac{\partial }{\partial t}\(\e \E\),
\end{equation}
in the sense of distributions in $\Omega\times (a,b)$.
Using the distributional formulas above, we deduce boundary conditions for $\E$ and $\H$.
More precisely, we prove the following spatio-temporal boundary conditions for the electric and magnetic fields.

\begin{theorem}\label{thm:temporal boundary conditions}
Assume that $\E$ and $\H$ satisfy Maxwell's equations \eqref{eq:maxwell equation Faraday-latest} and 
\eqref{eq:maxwell equation Ampere new} in the sense of distributions in $\Omega\times (a,b)$. 
Then the electric field satisfies
\begin{align}\label{first_new_temporal_bc}
\e^+_1(x,t_0^+)\E_1^+(x,t_0^+)-\e^-(x,t_0^-)\E^-(x,t_0^-)=0\qquad \text{for all } x\in \Omega_1,
\end{align}
\begin{align}\label{second_new_temporal_bc}
\e^+_2(x,t_0^+)\E_2^+(x,t_0^+)-\e^-(x,t_0^-)\E^-(x,t_0^-)=0\qquad \text{for all } x\in \Omega_2,
\end{align}
\begin{equation}\label{eq:space boundary condition for E}
\(\E_2^+(y,t)-\E_1^+(y,t)\)\times \mathbf{n}(y)=0 \quad \text{for each } y\in \Gamma \text{ and } t_0<t<b,
\end{equation}
and
\begin{align}\label{eq:bdry condition for E with dot product}
\(\e_2^+(y,t)\E_2^+(y,t)-\e_1^+(y,t)\E_1^+(y,t)\) \cdot \mathbf{n}(y) =0, \qquad \text{for each } y\in \Gamma \text{ and } t_0<t<b.
\end{align}

In addition, the magnetic field satisfies
\begin{align}\label{first_new_temporal_bc_forH}
\mu^+_1(x,t_0^+)\H_1^+(x,t_0^+)-\mu^-(x,t_0^-)\H^-(x,t_0^-)=0\qquad \text{for all } x\in \Omega_1,
\end{align}
\begin{align}\label{second_new_temporal_bc_forH}
\mu^+_2(x,t_0^+)\H_2^+(x,t_0^+)-\mu^-(x,t_0^-)\H^-(x,t_0^-)=0\qquad \text{for all } x\in \Omega_2,
\end{align}
\begin{equation}\label{eq:space boundary condition}
\(\H_2^+(y,t)-\H_1^+(y,t)\)\times \mathbf{n}(y)=0 \quad \text{for each } y\in \Gamma \text{ and } t_0<t<b,
\end{equation}
and
\begin{align}\label{eq:bdry condition for H with dot product}
\(\mu_2^+(y,t)\H_2^+(y,t)-\mu_1^+(y,t)\H_1^+(y,t)\) \cdot \mathbf{n}(y) =0, \qquad \text{for each } y\in \Gamma \text{ and } t_0<t<b.
\end{align}

\end{theorem}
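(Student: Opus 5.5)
The idea is to compare two representations of the same distribution. For the electric field we take \eqref{eq:maxwell equation Faraday-latest} in the form $\nabla\times\H=\frac1c\partial_t(\e\E)$. Its left side is given by \eqref{eq:formula for curl of H} and its right side by \eqref{eq:formula for derivative with respect to t of E}. Each representation splits into three kinds of terms:
\begin{itemize}
\item an integral over the hyperplane $\{t=t_0\}$;
\item an integral over the space-time surface $\Gamma\times(t_0,b)$;
\item absolutely continuous integrals over the open pieces $\Omega\times(a,t_0)$ and $\Omega_i\times(t_0,b)$.
\end{itemize}
These three sets are pairwise disjoint in space-time, and the last two kinds of terms see $\phi$ only on sets of measure zero. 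This lets us separate the identity into independent pieces by choosing the support of $\phi$.

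\emph{Step 1 (volume terms).} Take $\phi$ supported in $\Omega\times(a,t_0)$, then in $\Omega_i\times(t_0,b)$. Only the volume integrals survive, so $\nabla\times\H^\pm=\frac1c\partial_t(\e^\pm\E^\pm)$ holds pointwise in each open piece, since both sides are continuous there. These volume contributions therefore cancel for every $\phi$.

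\emph{Step 2 (temporal jump).} Take $\phi\in C_0^\infty(\Omega_1\times(a,b))$, so that $\phi$ vanishes near $\Gamma$. What remains is
\[
\int_{\Omega_1}\bigl(\e_1^+\E_1^+(x,t_0^+)-\e^-\E^-(x,t_0^-)\bigr)\phi(x,t_0)\,dx=0 .
\]
We choose $\phi(x,t)=\psi(x)\eta(t)$ with $\eta(t_0)=1$ and $\psi$ arbitrary. The bracketed function is then zero a.e. in $\Omega_1$, and it vanishes at every point provided it is continuous in $x$. I will need to assume this continuity from the trace hypotheses, or else state the result a.e.; this is the delicate point. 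The same argument in $\Omega_2$ gives \eqref{second_new_temporal_bc}.

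\emph{Step 3 (spatial jump for $\H$).} The hypersurface term on $t=t_0$ is now known to vanish identically. With Step 1, the remaining identity is
\[
\int_{t_0}^b\int_\Gamma\bigl((\H_2^+-\H_1^+)\times\mathbf n\bigr)\phi\,d\sigma\,dt=0
\]
for all $\phi$ supported near $\Gamma\times(t_0,b)$. Since restrictions of test functions to $\Gamma\times(t_0,b)$ are dense, for example by taking $\phi$ constant in the normal direction near $\Gamma$ times a cutoff, we obtain \eqref{eq:space boundary condition}.

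\emph{Step 4 (remaining conditions).} The companion equation \eqref{eq:maxwell equation Ampere new} is presumably Faraday's law $\nabla\times\E=-\frac1c\partial_t(\mu\H)$. Applying the identical argument to it, with Proposition \ref{prop:timederivativeforGfortwomedia} for $\mu\H$ and Proposition \ref{prop:formulaforcurlforgeneralmedia} for $\E$, yields \eqref{first_new_temporal_bc_forH}, \eqref{second_new_temporal_bc_forH} and \eqref{eq:space boundary condition for E}.

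For the normal conditions I would use \eqref{eq:div of curl zero}. Taking the divergence of $\nabla\times\H=\frac1c\partial_t(\e\E)$ gives $\partial_t(\nabla\cdot(\e\E))=0$ in distributions. Apply Proposition \ref{prop:divergence_formula} to $\e\E$, using Step 1 so that the volume parts are constant in time. Then $\partial_t$ of the surface distribution $\int_{t_0}^b\int_\Gamma\phi\,[\e\E]\cdot\mathbf n\,d\sigma\,dt$ vanishes. This makes the jump $[\e\E]\cdot\mathbf n$ independent of $t$ on $(t_0,b)$, and its value extends continuously up to $t_0$ by the trace hypotheses.

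The main obstacle is pinning this time-independent value to $0$. The plan is to evaluate it at $t_0^+$ using the temporal conditions \eqref{first_new_temporal_bc}–\eqref{second_new_temporal_bc}: at $t_0^+$ both sides equal $\e^-\E^-(y,t_0^-)$, which is continuous across $\Gamma$ because it is $C^1$ in $\Omega$ for $t<t_0$. Hence the jump at $t_0^+$ is zero, and so it is zero for all $t$, which gives \eqref{eq:bdry condition for E with dot product}. The same argument with $\mu\H$ gives \eqref{eq:bdry condition for H with dot product}. Alternatively, one can assume the Gauss laws (M.1)–(M.2) hold distributionally and read the normal conditions off Proposition \ref{prop:divergence_formula} directly.
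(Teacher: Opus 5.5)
Your Steps 1--3 and the curl half of Step 4 are the paper's proof. The paper first tests with $\phi$ supported in the four open pieces (its Cases 1--4). It then localizes to $\Omega_i\times(a,b)$ to obtain \eqref{first_new_temporal_bc}--\eqref{second_new_temporal_bc} from the fundamental lemma; it passes over your a.e.-versus-everywhere caveat without comment. Finally it reinserts these to get \eqref{eq:space boundary condition} and repeats everything with Faraday's law. For the normal conditions, the paper takes the route you list as your fallback. It simply imposes $\nabla\cdot(\e\E)=0$ and $\nabla\cdot(\mu\H)=0$ in the sense of distributions, even though the theorem lists only the two curl equations, and reads the jumps off Proposition~\ref{prop:divergence_formula}.

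Your main route for the normal conditions is genuinely different, but as written the pinning step has a gap. You use that $x\mapsto\e^-\E^-(x,t_0^-)$ is continuous across $\Gamma$ ``because it is $C^1$ in $\Omega$ for $t<t_0$''. That does not follow. $C^1$ regularity on the open set $\Omega\times(a,t_0)$ says nothing about the trace at $t=t_0$. Hypothesis (2) gives only pointwise limits as $t\to t_0^-$; joint limits at points of $\Gamma$ are assumed only on the $+$ side. The claim also fails in general. Letting $x\to y\in\Gamma$ in the temporal conditions, the one-sided limits of this trace are $\e_i^+(y,t_0)\E_i^+(y,t_0)$, $i=1,2$. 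Their tangential parts differ whenever $\e_1^+(y,t_0)\neq\e_2^+(y,t_0)$ and the tangential electric field is nonzero there; that field is continuous across $\Gamma$ by \eqref{eq:space boundary condition for E}. Continuity of the normal component is exactly what you are trying to prove, so it cannot be imported this way.

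The repair is to keep the layer at $t=t_0$ instead of working only on $(t_0,b)$; then no pinning is needed. Let $f^-=\nabla\cdot(\e^-\E^-)$ and $f_i^+=\nabla\cdot(\e_i^+\E_i^+)$, which are $t$-independent on their pieces. Testing $\partial_t\nabla\cdot(\e\E)=0$ against $\phi$ supported in $\Omega_i\times(a,b)$ leaves only $\int_{\Omega_i}\phi(x,t_0)\bigl(f_i^+-f^-\bigr)\,dx=0$, so $f_i^+=f^-$ on $\Omega_i$. Hence the volume part of $\nabla\cdot(\e\E)$ has zero $t$-derivative on all of $\Omega\times(a,b)$. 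So does the surface distribution $\phi\mapsto\int_{t_0}^b\int_\Gamma\phi\,\bigl(\e_2^+\E_2^+-\e_1^+\E_1^+\bigr)\cdot\mathbf n\,d\sigma\,dt$. A $t$-independent distribution on $\Omega\times(a,b)$ that vanishes on $\Omega\times(a,t_0)$ vanishes identically. This gives \eqref{eq:bdry condition for E with dot product} without using the temporal conditions, and the same argument works for $\mu\H$.

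Repaired this way, your route derives the normal conditions from the two curl equations alone. That fits the hypotheses actually stated in the theorem better than the paper's proof, which needs the Gauss laws as extra input. The paper's route is shorter, though, and also yields the interior identities $\nabla\cdot(\e^\pm\E^\pm)=0$; yours gives only their time-independence.
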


\begin{proof}
{\bf Case 1.} Suppose $\phi$ has compact support in $\Omega_1\times (a,t_0)$.
From \eqref{eq:formula for curl of H}, we obtain
\[
\langle \nabla \times \H, \phi \rangle
=
\int_a^{t_0} \int_{\Omega_1}\phi\,\nabla \times \H^- \,dx\,dt.
\]
Similarly, from \eqref{eq:formula for derivative with respect to t of E},
\[
\left\langle \dfrac{\partial (\e\E)}{\partial t},\phi \right\rangle
=
\int_{\Omega_1} \int_a^{t_0} \partial_t\(\e^-\E^-(x,t)\)\phi(x,t)\,dt\,dx.
\]
Hence, from \eqref{eq:maxwell equation Faraday-latest}, we obtain 
\[
\nabla \times \H^-(x,t)=\dfrac{1}{c}\,\partial_t\(\e^-\E^-(x,t)\)\qquad \text{for all $x\in \Omega_1$ and $a<t<t_0$.}\]

{\bf Case 2.} Suppose $\phi$ has compact support in $\Omega_2\times (a,t_0)$.
From \eqref{eq:formula for curl of H}, we obtain
\[
\langle \nabla \times \H, \phi \rangle
=
\int_a^{t_0} \int_{\Omega_2}\phi\,\nabla \times \H^- \,dx\,dt.
\]
Similarly, from \eqref{eq:formula for derivative with respect to t of E},
\[
\left\langle \dfrac{\partial (\e\E)}{\partial t},\phi \right\rangle
=
\int_{\Omega_2} \int_a^{t_0} \partial_t\(\e^-\E^-(x,t)\)\phi(x,t)\,dt\,dx.
\]
Hence, from \eqref{eq:maxwell equation Faraday-latest}, we obtain 
\[
\nabla \times \H^-(x,t)=\dfrac{1}{c}\,\partial_t\(\e^-\E^-(x,t)\)\qquad \text{for all $x\in \Omega_2$ and $a<t<t_0$.}\]

{\bf Case 3.} Suppose $\phi$ has compact support in $\Omega_1\times (t_0,b)$.
From \eqref{eq:formula for curl of H}, we obtain
\[
\langle \nabla \times \H, \phi \rangle
=
\int_{t_0}^b \int_{\Omega_1}\phi\,\nabla \times \H^+_1 \,dx\,dt.
\]
Similarly, from \eqref{eq:formula for derivative with respect to t of E},
\[
\left\langle \dfrac{\partial (\e\E)}{\partial t},\phi \right\rangle
=
\int_{\Omega_1} \int_{t_0}^b \partial_t\(\e^+_1\E^+_1(x,t)\)\phi(x,t)\,dt\,dx.
\]
Hence, from \eqref{eq:maxwell equation Faraday-latest}, we obtain 
\[
\nabla \times \H^+_1(x,t)=\dfrac{1}{c}\,\partial_t\(\e^+_1\E^+_1(x,t)\)\qquad \text{for all $x\in \Omega_1$ and $t_0<t<b$.}\]

{\bf Case 4.} Suppose $\phi$ has compact support in $\Omega_2\times (t_0,b)$.
From \eqref{eq:formula for curl of H}, we obtain
\[
\langle \nabla \times \H, \phi \rangle
=
\int_{t_0}^b \int_{\Omega_2}\phi\,\nabla \times \H^+_2 \,dx\,dt.
\]
Similarly, from \eqref{eq:formula for derivative with respect to t of E},
\[
\left\langle \dfrac{\partial (\e\E)}{\partial t},\phi \right\rangle
=
\int_{\Omega_2} \int_{t_0}^b \partial_t\(\e^+_2\E^+_2(x,t)\)\phi(x,t)\,dt\,dx.
\]
Hence, from \eqref{eq:maxwell equation Faraday-latest}, we obtain 
\[
\nabla \times \H^+_2(x,t)=\dfrac{1}{c}\,\partial_t\(\e^+_2\E^+_2(x,t)\)\qquad \text{for all $x\in \Omega_2$ and $t_0<t<b$.}\]

Therefore, using \eqref{eq:maxwell equation Faraday-latest} in \eqref{eq:formula for curl of H} and \eqref{eq:formula for derivative with respect to t of E}, the corresponding double integrals are equal, and we obtain
\begin{align}\label{eq:identity for the boundary}
&\int_{\Omega_1} \(\e^+_1(x,t_0^+)\E_1^+(x,t_0^+)-\e^-(x,t_0^-)\E^-(x,t_0^-)\)\phi(x,t_0)\,dx\notag
+\int_{\Omega_2} \(\e^+_2(x,t_0^+)\E_2^+(x,t_0^+)-\e^-(x,t_0^-)\E^-(x,t_0^-)\)\phi(x,t_0)\,dx\\
&\qquad 
=
-c\,\int_{t_0}^b \int_\Gamma \(\(\H_2^+-\H_1^+\)\times {\bf{n}}\)\,\phi\,d\sigma\,dt
\end{align}
for each function $\phi\in C_0^1(\Omega\times (a,b))$.
Now, if $\phi\in C_0^1(\Omega_1\times (a,b))$, then $\phi=0$ on $\Gamma$. Thus the right-hand side of the last identity is zero, and we obtain 
\[
\int_{\Omega_1} \(\e^+_1(x,t_0^+)\E_1^+(x,t_0^+)-\e^-(x,t_0^-)\E^-(x,t_0^-)\)\phi(x,t_0)\,dx=0
\]
for each $\phi\in C_0^1(\Omega_1\times (a,b))$. Therefore, by the fundamental lemma of the calculus of variations, we obtain the first temporal boundary condition \eqref{first_new_temporal_bc}.
Similarly, if $\phi\in C_0^1(\Omega_2\times (a,b))$, we obtain the second temporal boundary condition \eqref{second_new_temporal_bc}.
Hence, inserting \eqref{first_new_temporal_bc} and \eqref{second_new_temporal_bc} into \eqref{eq:identity for the boundary}, we obtain 
\[
\int_{t_0}^b \int_\Gamma \(\(\H_2^+-\H_1^+\)\times {\bf{n}}\)\,\phi\,d\sigma\,dt =0
\]
for each function $\phi\in C_0^1(\Omega\times (a,b))$, which implies \eqref{eq:space boundary condition}.
Similarly, suppose that the magnetic field $\H$ has the form
\begin{equation*}
\H=
\begin{cases}
\H^- & \text{for $t<t_0$ and $x\in \Omega$}\\
\H^+_1 & \text{for $t>t_0$ and $x\in \Omega_1$}\\
\H^+_2 & \text{for $t>t_0$ and $x\in \Omega_2$}.
\end{cases}
\end{equation*}
Next, set
\[
\G
=
\begin{cases}
\G^-(x,t)=\mu^-(x,t)\H^-(x,t) & \text{for $x\in \Omega$ and $a<t<t_0$}\\
\G^+_1(x,t)=\mu^+_1(x,t)\H^+_1(x,t) & \text{for $x\in \Omega_1$ and $t_0<t<b$}\\
\G^+_2(x,t)=\mu^+_2(x,t)\H^+_2(x,t) & \text{for $x\in \Omega_2$ and $t_0<t<b$}.
\end{cases}
\]
Assume now that the following holds in $\Omega \times (a,b)$:
\begin{equation}\label{eq:maxwell equation Ampere new}
\nabla \times \E=-\dfrac{1}{c} \dfrac{\partial }{\partial t}\(\mu \H\)
\end{equation}  
in the sense of distributions. As above, this means that
\begin{align}
\nabla \times \E^-(x,t)
&=-\dfrac{1}{c} \dfrac{\partial }{\partial t}\(\mu^-(x,t) \H^-(x,t)\),
&& x\in \Omega,\quad a<t<t_0, \label{eq:faraday_minus}\\
\nabla \times \E_1^+(x,t)
&=-\dfrac{1}{c} \dfrac{\partial }{\partial t}\(\mu^+_1(x,t) \H_1^+(x,t)\),
&& x\in \Omega_1,\quad t_0<t<b, \label{eq:faraday_plus_Omega1}\\
\nabla \times \E_2^+(x,t)
&=-\dfrac{1}{c} \dfrac{\partial }{\partial t}\(\mu^+_2(x,t) \H_2^+(x,t)\),
&& x\in \Omega_2,\quad t_0<t<b. \label{eq:faraday_plus_Omega2}
\end{align}
Repeating the previous argument, we obtain \eqref{first_new_temporal_bc_forH} and \eqref{second_new_temporal_bc_forH}, the analogues of \eqref{first_new_temporal_bc} and \eqref{second_new_temporal_bc}, as well as \eqref{eq:space boundary condition for E}, the analogue of \eqref{eq:space boundary condition}.

Now we consider the source-free divergence conditions obtained from \eqref{divergence E zero}, \eqref{divergence B zero}, and the constitutive relations \eqref{eq:constitutive}:
\begin{align}
\nabla \cdot (\varepsilon \E)&=0, \label{eq:divergence_free_electric_displacement}\\
\nabla \cdot (\mu \H)&=0, \label{eq:divergence_free_magnetic_induction}
\end{align}
again understood in the sense of distributions.
Using Proposition~\ref{prop:divergence_formula} and repeating the same procedure, first with $\G_2^+=\e_2^+\E_2^+$ and $\G_1^+=\e_1^+\E_1^+$, we obtain
\begin{align*}
&\nabla \cdot (\varepsilon^- \E^-)=0 \qquad \text{for each $x\in\Omega$ and $a<t<t_0$}, \\
&\nabla \cdot (\varepsilon_1^+ \E_1^+)=0 \qquad \text{for each $x\in\Omega_1$ and $t_0<t<b$}, \\
&\nabla \cdot (\varepsilon_2^+ \E_2^+)=0 \qquad \text{for each $x\in\Omega_2$ and $t_0<t<b$}, 
\end{align*}
and ultimately the boundary condition \eqref{eq:bdry condition for E with dot product}.
Similarly, taking $\G_2^+=\mu_2^+\H_2^+$ and $\G_1^+=\mu_1^+\H_1^+$, we have
\begin{align*}
&\nabla \cdot (\mu^- \H^-)=0 \qquad \text{for each $x\in\Omega$ and $a<t<t_0$}, \\
&\nabla \cdot (\mu_1^+  \H_1^+)=0 \qquad \text{for each $x\in\Omega_1$ and $t_0<t<b$}, \\
&\nabla \cdot (\mu_2^+ \H_2^+)=0 \qquad \text{for each $x\in\Omega_2$ and $t_0<t<b$.}
\end{align*}
Then we arrive at the boundary condition \eqref{eq:bdry condition for H with dot product} for $\H$.
This completes the proof of Theorem \ref{thm:temporal boundary conditions}.
\end{proof}


\section{The generalized Snell's law for spatio-temporal media}\label{sec:GSL_spacetime}

In this section, we prove the generalized Snell's law for media with both temporal and spatial interfaces. 
We shall use the following exponential lemma, whose proof is given in \cite[Lemma~1]{CGES25temporal}.

\begin{lemma}\label{lm:exponentials}
Let $A_1, \cdots, A_N \in \C^n\setminus \{0\}$, and $\o_1, \cdots, \o_N \in \R$.
If 
\begin{equation}\label{eq:sum exponentials equal zero}
\sum_{j=1}^N A_j\,e^{i\,\o_j x} = 0 \qquad \forall \; x \in \R,
\end{equation}
 then $\o_1=\cdots =\o_N$.
 \end{lemma}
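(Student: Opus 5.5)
The plan is to reduce the lemma to the linear independence of the functions $x\mapsto e^{i\o x}$ for distinct real $\o$, using a mean-value (averaging) argument.

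First I group the indices by frequency. Let $\lambda_1,\dots,\lambda_M$ be the distinct values among $\o_1,\dots,\o_N$, and for $1\le k\le M$ set $B_k=\sum_{j:\,\o_j=\lambda_k}A_j\in\C^n$. Then \eqref{eq:sum exponentials equal zero} becomes $\sum_{k=1}^M B_k e^{i\lambda_k x}=0$ for all $x\in\R$.

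Next, fix $m$, multiply this identity by $e^{-i\lambda_m x}$, and average over $[-T,T]$. For $\lambda\neq 0$,
\[
\frac{1}{2T}\int_{-T}^{T}e^{i\lambda x}\,dx=\frac{\sin(\lambda T)}{\lambda T},
\]
which tends to $0$ as $T\to\infty$, while for $\lambda=0$ the average equals $1$. Applying this componentwise to the vector identity and letting $T\to\infty$ gives $B_m=0$. Hence every grouped coefficient vanishes. This step is routine.

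The main obstacle is the final step, because the grouped conclusion does not by itself force all $\o_j$ to coincide. Some extra information about the coefficients is needed.

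\emph{If the $\o_j$ are pairwise distinct} (equal-frequency terms already merged), each group has one element, so $A_m=B_m=0$. This contradicts $A_m\neq0$, so \eqref{eq:sum exponentials equal zero} cannot hold for $N\ge 2$.

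\emph{If frequencies may repeat,} the conclusion "$\o_1=\cdots=\o_N$" needs an additional assumption. A counterexample to the literal statement is $A,-A$ at frequency $\o_1$ together with $B,-B$ at a different frequency $\o_2$. So I would first check how the lemma is used later. In the Snell-law derivations one typically knows that all the $A_j$ attached to frequencies different from a distinguished one cannot cancel among themselves, for instance because they belong to a single generated wave, or because they are not all the same sign after a projection. The natural hypothesis is that no proper subfamily of equal-frequency coefficients sums to zero. Under that hypothesis, $B_m=0$ for every class forces a single class, i.e. $M=1$, which gives $\o_1=\cdots=\o_N$.

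I would therefore write the proof as the averaging argument above. I would make explicit that the conclusion holds once the terms with the same frequency have been combined (equivalently, under the non-cancellation hypothesis on the $A_j$), and note that this is the form in which the lemma is applied in what follows.
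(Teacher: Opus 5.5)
The paper gives no proof of this lemma in the text; it defers to \cite[Lemma~1]{CGES25temporal}, so there is no internal argument to compare with. Your analysis is correct, and it is sharper than the statement itself. The family $A,-A$ at one frequency together with $B,-B$ at a different frequency is a genuine counterexample, so the lemma as written is false for every $N\ge 4$. Your averaging argument correctly proves what is true: for each distinct frequency $\lambda_k$, the class sum $B_k=\sum_{\omega_j=\lambda_k}A_j$ vanishes. Given the identity, your non-cancellation hypothesis is also necessary and sufficient for the stated conclusion.

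One consequence worth recording follows from $A_j\neq 0$ for every $j$. No frequency class can be a singleton, so having two or more classes requires at least four terms. Hence the literal statement \emph{does} hold for $N\le 3$. This covers the three-term scalar equations in Lemma~\ref{lm:equal_exponents_from_vector_equation} and the two-term separation in $t$.

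Your closing claim, that the non-cancellation form is the one used later, is not accurate for this paper. In Section~\ref{sec:spatial_interface} the lemma is applied to the six-term identities \eqref{eq:alpha equation} and \eqref{eq:beta equation}. Zero coefficients are discarded, but equal exponents are not merged. The hypotheses of Theorem~\ref{thm:GSL_spatial_interface} do not imply your non-cancellation condition, so this is exactly where your counterexample bites, and those steps need a different justification.

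One repair works when $\omega_2\neq\omega_3$ and the velocities do not depend on $t$:
\begin{enumerate}
\item Write $\alpha_j(t)=a_je^{-i\nu_jt}$ and $\beta_j(t)=b_je^{-i\nu_jt}$ with $\nu_j\in\{\omega_3,\omega_2\}$.
\item Regard the pair of equations as one $\C^2$-valued exponential identity in $(x,t)$, with coefficients $(a_j,b_j)\neq(0,0)$ by the non-parallel assumption.
\item The time frequencies separate the indices $\{1,3,5\}$ from $\{2,4,6\}$, leaving two three-term identities to which the valid $N\le 3$ case applies.
\item This gives $\lambda_1=\lambda_3=\lambda_5$ and $\lambda_2=\lambda_4=\lambda_6$, and $\lambda_3=\lambda_4$ follows from \eqref{eq:transmitted and reflected vectors in the first medium}.
\end{enumerate}
When $\omega_2=\omega_3$ this separation is not available.

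A minor technical point: averaging needs the identity on an unbounded set, but the paper also uses an open-interval version. Either combine your argument with the analytic-continuation remark following the lemma, or use a local Vandermonde argument instead. Differentiating $p=0,\dots,M-1$ times at a point $x_0$ gives $\sum_k (i\lambda_k)^p e^{i\lambda_k x_0}B_k=0$. The matrix with entries $(i\lambda_k)^p$ is invertible for distinct $\lambda_k$, so every $B_k=0$.
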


The conclusion of Lemma~\ref{lm:exponentials} remains valid if \eqref{eq:sum exponentials equal zero} is assumed only for every $x$ in a nonempty open interval $(a,b)$. Indeed, the left-hand side of \eqref{eq:sum exponentials equal zero} extends to an entire vector-valued function; if it vanishes on an open interval, then it vanishes identically, and Lemma~\ref{lm:exponentials} applies. We use this open-interval version below.

\subsection{The temporal interface}\label{sec:temporal_interface}
We assume that the domain $\Omega$ is decomposed as described at the beginning of Section~\ref{sec:general_formulas}.
An incident wave propagates through the material in $\Omega$ for $t<t_0$. For $t>t_0$, the material properties change according to \eqref{eps-mu-piecewise-def}. Our objective is therefore to determine quantitatively the resulting propagation of the wave after this change. We represent the fields as combinations of piecewise plane waves.

With $\e$ and $\mu$ defined as in \eqref{eps-mu-piecewise-def}, define the velocity by
\[
v(x,t)=\dfrac{1}{\sqrt{\e(x,t)\mu(x,t)}}, \qquad t\in (a,b)\setminus \{t_0\}, \; x\in\Omega.
\]
In this section, we assume that each component of the material parameters in \eqref{eps-mu-piecewise-def}, namely $\e^-$, $\mu^-$, $\e^{+}_{1,2}$, and $\mu_{1,2}^{+}$, is independent of $x$ but may still vary with $t$. Thus, we denote the corresponding piecewise values of $v$ by
\[
v(x,t) =
\begin{cases}
v_-(t) & \text{if } x\in \Omega,\ t<t_0, \\
v_1^+(t) & \text{if } x\in \Omega_1,\ t>t_0,\\
v_2^+(t) & \text{if } x\in \Omega_2,\ t>t_0.
\end{cases}
\]
We also define the one-sided limits
\begin{align*}
v_-(t_0) &=\lim_{t\to t_0^-} v_-(t),\qquad 
v^1_+(t_0)=\lim_{t\to t_0^+}v_1^+(t),\qquad 
v^2_+(t_0)=\lim_{t\to t_0^+}v_2^+(t).
\end{align*}

We make the following ansatz for the electric fields. Namely, we assume that the incident electric field has the form
\[
\E_i(x,t)={\mathbf{I}}\, e^{i\omega_1 \left( \frac{\k_i \cdot x}{v_-(t)}-t\right)}, \qquad t < t_0, \quad x \in \Omega,\quad \omega_1>0,
\]
where ${\mathbf{I}}$ is a nonzero vector. We assume that the transmitted field has the form
\[
\E_t(x,t) =
{\mathbf{T}}\, e^{i\omega_3 \left( \frac{\k_t^1\cdot x}{v_+^1(t)}-t\right)}
\qquad \text{for } x\in\Omega_1,\ t>t_0.
\]
Because of the temporal change at $t=t_0$, a reflected field is also generated in $\Omega_1$. We assume that it has the form
\[
\E_r(x,t) =
{\mathbf{R}}\, e^{i\omega_2 \left( \frac{\k_r^1\cdot x}{v_+^1(t)}-t\right)}
\qquad \text{for } x\in\Omega_1,\ t>t_0.
\]
Here $\k_i$ and $\k_\ell^1$, $\ell=r,t$, are unit vectors.


As the wave propagates into $\Omega_2$, the spatial interface $\Gamma$ generates additional transmitted and reflected fields. We model these fields in Section~\ref{sec:spatial_interface}.

Set $\E_- = \E_i$ and $\E_+=\E_t+\E_r$ to denote the fields on the two sides of the temporal interface. Define	
\[
\E=
\begin{cases}
\E_+ & \text{for $x\in \Omega_1$ and $t>t_0$}\\
\E_- & \text{for $x\in \Omega$ and $t<t_0$}
\end{cases},
\]
We assume that $\E$ satisfies the Maxwell system in $\Omega_1\times (a,b)$. Applying the temporal boundary condition from \cite[equation (15)]{CGES25temporal}, which is analogous to \eqref{first_new_temporal_bc}, yields
\begin{align}\label{eq:bdry condition in Omega 1}
\e_1^+(x,t_0^+)\left( {\mathbf{T}}\, e^{i\omega_3 \left( \frac{\k_t^1\cdot x}{v_+^1(t_0)}-t_0\right)}
+{\mathbf{R}}\, e^{i\omega_2 \left( \frac{\k_r^1\cdot x}{v_+^1(t_0)}-t_0\right)}\right) -
\e^-(x,t_0^-)
{\mathbf{I}}\, e^{i\omega_1 \left( \frac{\k_i \cdot x}{v_-(t_0)}-t_0\right)}=0
\end{align}
for all $x=(x_1,x_2,x_3) \in\Omega_1$. 
%
%
Since the coefficients $\e_1^+(x,t_0^+)$ and $\e^-(x,t_0^-)$ are independent of $x$,  
and the vectors ${\mathbf{T}}$, ${\mathbf{R}}$, and ${\mathbf{I}}$ are nonzero,
Lemma~\ref{lm:equal_exponents_from_vector_equation} applied to \eqref{eq:bdry condition in Omega 1} gives
\begin{align*}
\omega_3  \frac{\k_t^1}{v_+^1(t_0)}
&=
\omega_2  \frac{\k_r^1}{v_+^1(t_0)}
=
\omega_1  \frac{\k_i}{v_-(t_0)},
\end{align*}
that is,
\begin{align}\label{eq:transmitted and reflected vectors in the first medium}
\k_t^1
&=
\dfrac{\omega_1}{\omega_3}\dfrac{v_+^1(t_0)}{v_-(t_0)}\k_i,
\qquad \text{and }
\k_r^1
=
\dfrac{\omega_1}{\omega_2}\dfrac{v_+^1(t_0)}{v_-(t_0)}\k_i.
\end{align}

The elementary fact about vector-valued exponential sums used above is recorded in the following lemma.

\begin{lemma}\label{lm:equal_exponents_from_vector_equation}
Let $\Omega\subset\R^3$ be an open connected domain. Assume that $B_i, B_r, B_t\in \C^3$ are nonzero vectors, $\m_i,\m_r,\m_t\in\R^3$, and
\begin{equation}\label{eq:lemma_vector_exponential_equation}
B_t e^{i\m_t\cdot x}+B_r e^{i\m_r\cdot x}-B_i e^{i\m_i\cdot x}=0
\qquad\text{for all }x\in\Omega.
\end{equation}
Then
\[
\m_t=\m_r=\m_i.
\]
\end{lemma}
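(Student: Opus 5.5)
The plan is to reduce the three-dimensional statement to the one-variable Lemma~\ref{lm:exponentials}, in its open-interval version, by restricting \eqref{eq:lemma_vector_exponential_equation} to line segments in $\Omega$. Fix a point $x_0\in\Omega$ and a unit direction $d\in\R^3$. Since $\Omega$ is open, there is $\delta>0$ such that $x_0+sd\in\Omega$ for all $s\in(-\delta,\delta)$. Substituting $x=x_0+sd$ into \eqref{eq:lemma_vector_exponential_equation} gives
\[
A_t\,e^{i(\m_t\cdot d)s}+A_r\,e^{i(\m_r\cdot d)s}+A_i\,e^{i(\m_i\cdot d)s}=0,\qquad s\in(-\delta,\delta),
\]
where
\[
A_t=B_t e^{i\m_t\cdot x_0},\qquad A_r=B_r e^{i\m_r\cdot x_0},\qquad A_i=-B_i e^{i\m_i\cdot x_0}.
\]
These coefficients are nonzero vectors in $\C^3$, because each $B_\ell$ is nonzero and the exponential factors have modulus one. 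The frequencies $\m_\ell\cdot d$ are real, since the $\m_\ell$ are real.

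The open-interval version of Lemma~\ref{lm:exponentials}, with $N=3$ and $n=3$, then gives
\[
\m_t\cdot d=\m_r\cdot d=\m_i\cdot d .
\]
Taking $d$ to be each of $\mathbf{i}$, $\mathbf{j}$, $\mathbf{k}$ in turn, all three components of $\m_t$, $\m_r$ and $\m_i$ agree, so $\m_t=\m_r=\m_i$. Connectedness of $\Omega$ is not needed: one point and three coordinate directions suffice.

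The step that needs care is checking that Lemma~\ref{lm:exponentials} really applies when some of the restricted frequencies coincide, for example when $\m_t\cdot d=\m_r\cdot d$ with $A_t+A_r=0$. For $N=3$ I would verify this directly with a grouping argument.
\begin{enumerate}
\item Group the terms by distinct frequency. By linear independence of the functions $e^{i\omega s}$ for distinct real $\omega$ on an interval (for instance via analytic continuation and a Vandermonde argument), each group's summed coefficient must vanish.
\item With three terms, any partition into at least two groups contains a singleton.
\item A singleton group has a nonzero coefficient, which is a contradiction.
\item Hence all three frequencies coincide.
\end{enumerate}
This confirms the conclusion without relying on more than the three-term case.
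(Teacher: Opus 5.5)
Your proof is correct, and it takes a shorter route than the paper's. The paper splits \eqref{eq:lemma_vector_exponential_equation} into three scalar equations, extends them to all of $\R^3$ by real-analyticity, and restricts to the coordinate axes. Because individual components of the $B_\ell$ may vanish, it then has to discard zero coefficients and rule out scalar equations with a single nonzero coefficient. It treats separately the case of a scalar equation with three nonzero coefficients, and otherwise chains the pairwise equalities from two-term equations through connectivity of a graph on the vertices $\{t,r,i\}$.

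You keep the coefficients as vectors in $\C^3$, which Lemma~\ref{lm:exponentials} permits, so all three restricted coefficients are automatically nonzero. One three-term application per coordinate direction then gives everything. The case analysis and the graph argument disappear, and you work only on a short segment through a point of $\Omega$, with no multivariable analyticity needed.

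Your direct grouping check of the case $N=3$ is well placed. Lemma~\ref{lm:exponentials} as literally stated fails for $N\ge 4$: take coefficients $A,-A,B,-B$ with frequencies $0,0,1,1$. It is precisely the three-term case that both proofs actually use.

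The paper's componentwise route buys little here beyond needing only scalar coefficients and paralleling the bookkeeping used later in Section~\ref{sec:spatial_interface}. Your remark that connectedness of $\Omega$ is superfluous is also correct. The paper's identity-theorem step does not use it either, since that step is applied on $\R^3$.
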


\begin{proof}
Write
\[
B_t=(\alpha_1,\beta_1,\gamma_1),\qquad
B_r=(\alpha_2,\beta_2,\gamma_2),\qquad
B_i=(\alpha_3,\beta_3,\gamma_3).
\]
Then \eqref{eq:lemma_vector_exponential_equation} is equivalent to the three scalar equations
\begin{align}
\alpha_1 e^{i\m_t\cdot x}+\alpha_2 e^{i\m_r\cdot x}-\alpha_3 e^{i\m_i\cdot x}&=0,\label{eq:scalar_alpha}\\
\beta_1 e^{i\m_t\cdot x}+\beta_2 e^{i\m_r\cdot x}-\beta_3 e^{i\m_i\cdot x}&=0,\label{eq:scalar_beta}\\
\gamma_1 e^{i\m_t\cdot x}+\gamma_2 e^{i\m_r\cdot x}-\gamma_3 e^{i\m_i\cdot x}&=0.\label{eq:scalar_gamma}
\end{align}

Each component of the left-hand side of \eqref{eq:lemma_vector_exponential_equation} is a real-analytic function on $\R^3$. Since \eqref{eq:lemma_vector_exponential_equation} holds on the nonempty open set $\Omega$, the identity theorem for real-analytic functions implies that it holds for every $x\in\R^3$. Consequently, the scalar equations \eqref{eq:scalar_alpha}--\eqref{eq:scalar_gamma} also hold for every $x\in\R^3$.

We may therefore restrict these equations to the coordinate axes. For $q=1,2,3$, set $x=s\mathbf e_q$, where $\mathbf e_q$ is the $q$th coordinate vector and $s\in\R$. For example, \eqref{eq:scalar_alpha} becomes
\[
\alpha_1 e^{i(\m_t)_q s}
+\alpha_2 e^{i(\m_r)_q s}
-\alpha_3 e^{i(\m_i)_q s}=0
\qquad\text{for all }s\in\R.
\]
After the zero coefficients are discarded, all remaining coefficients are nonzero. Lemma~\ref{lm:exponentials} therefore gives equality of the corresponding $q$th components. Repeating this for $q=1,2,3$ gives equality of the corresponding vectors in $\R^3$.

Next, note that a nontrivial scalar equation among
\eqref{eq:scalar_alpha}--\eqref{eq:scalar_gamma} cannot have only one nonzero coefficient. Indeed, if, for example,
\[
a\,e^{i\m_t\cdot x}=0
\qquad\text{for all }x\in\R^3,
\]
then necessarily $a=0$, which contradicts the assumption that the equation is nontrivial. Hence every nontrivial scalar equation has either exactly two nonzero coefficients or all three coefficients nonzero.

If one of the equations \eqref{eq:scalar_alpha}--\eqref{eq:scalar_gamma} has all three coefficients nonzero, then the preceding coordinate-axis argument yields
\[
\m_t=\m_r=\m_i,
\]
and we are done.

Assume, therefore, that no nontrivial scalar equation has three nonzero coefficients. Then every nontrivial scalar equation has exactly two nonzero coefficients. In that case, the same coordinate-axis argument shows that each nontrivial equation yields equality of the two corresponding exponent vectors. For instance:

\begin{itemize}
\item If \eqref{eq:scalar_alpha} has $\alpha_1,\alpha_2\neq 0$ and $\alpha_3=0$, then
\[
\m_t=\m_r;
\]

\item If \eqref{eq:scalar_beta} has $\beta_1,\beta_3\neq 0$ and $\beta_2=0$, then
\[
\m_t=\m_i;
\]

\item If \eqref{eq:scalar_gamma} has $\gamma_2,\gamma_3\neq 0$ and $\gamma_1=0$, then
\[
\m_r=\m_i.
\]
\end{itemize}

It is convenient to encode this information in a graph. Consider the graph with vertices
\[
\{t,r,i\},
\]
and draw an edge between two vertices whenever one of the scalar equations
\eqref{eq:scalar_alpha}--\eqref{eq:scalar_gamma} has exactly the corresponding two coefficients nonzero. By the previous paragraph, each such edge gives the equality of the corresponding exponents.

Since each of the vectors $B_t, B_r, B_i$ is nonzero, each of the vertices $t,r,i$ appears in at least one of the equations \eqref{eq:scalar_alpha}--\eqref{eq:scalar_gamma} with a nonzero coefficient. Therefore each vertex is incident to at least one edge of the graph. Since the graph has only three vertices, it is connected. Consequently, there is a path joining any two vertices, and chaining the equalities given by the edges, we obtain
\[
\m_t=\m_r=\m_i.
\]

This proves the lemma.
\end{proof}



This shows the relationship between the wave vectors in the first layer $\Omega_1$ and the incident vector $\k_i$. Since the wave vectors are unit vectors, taking absolute values in \eqref{eq:transmitted and reflected vectors in the first medium} gives the relation
\[
|\omega_3|=|\omega_2|=\omega_1 \,\dfrac{v_+^1(t_0)}{v_-(t_0)},
\]
since $\omega_1>0$.

\color{black}
\subsection{The spatial interface}\label{sec:spatial_interface}

We now model the transmitted and reflected fields generated when the wave created at the temporal interface strikes the transition surface $\Gamma$ at times $t>t_0$. Assume that, for $t>t_0$, the field incident on $\Gamma$ from $\Omega_1$ is
\[
\tilde{\E_i}=\E_\t+\E_r
=
{\mathbf T}\,e^{i\omega_3\left(\frac{\k_\t^1\cdot x}{v_+^1(t)}-t\right)}
+
{\mathbf R}\,e^{i\omega_2\left(\frac{\k_r^1\cdot x}{v_+^1(t)}-t\right)},
\qquad x\in\Omega_1,\ t>t_0,
\]
where $\E_\t,\E_r$ were defined in Section \ref{sec:temporal_interface}. Here $\k_\t^1$ and $\k_r^1$ are the two wave vectors in $\Omega_1$ produced at the temporal interface, and they are related to the original incident vector $\k_i$ through \eqref{eq:transmitted and reflected vectors in the first medium}.

When the wave $\tilde{\E_i}$ reaches $\Gamma$, it generates a reflected field $\tilde{\E_r}$ in $\Omega_1$ and a transmitted field $\tilde{\E_\t}$ in $\Omega_2$.
Since the frequencies $\omega_2$ and $\omega_3$ may be different, we allow both frequencies to appear in both fields. Thus we make the ansatz
\begin{equation}\label{eq:transmitted field into Omega 2}
\tilde{\E_\t}
=
{\mathbf T}_1\,e^{i\omega_3\left(\frac{\m_\t^1\cdot x}{v_+^2(t)}-t\right)}
+
{\mathbf T}_2\,e^{i\omega_2\left(\frac{\m_\t^2\cdot x}{v_+^2(t)}-t\right)},
\qquad x\in\Omega_2,\ t>t_0,
\end{equation}
and
\begin{equation}\label{eq:field reflected back from Gamma}
\tilde{\E_r}
=
{\mathbf R}_1\,e^{i\omega_3\left(\frac{\m_r^1\cdot x}{v_+^1(t)}-t\right)}
+
{\mathbf R}_2\,e^{i\omega_2\left(\frac{\m_r^2\cdot x}{v_+^1(t)}-t\right)},
\qquad x\in\Omega_1,\ t>t_0,
\end{equation}
for some unknown unit vectors $\m_\t^1,\m_\t^2,\m_r^1,\m_r^2$.

We now apply the boundary condition \eqref{eq:space boundary condition for E}, where
\[
\E_2^+=\tilde{\E}_{\t},
\qquad
\E_1^+=\tilde{\E}_i+\tilde{\E}_r.
\]
Thus
\[
\E_2^+
=
{\mathbf T}_1\,e^{i\omega_3\left(\frac{\m_\t^1\cdot x}{v_+^2(t)}-t\right)}
+
{\mathbf T}_2\,e^{i\omega_2\left(\frac{\m_\t^2\cdot x}{v_+^2(t)}-t\right)},
\qquad x\in\Omega_2,\ t>t_0,
\]
and
\[
\E_1^+
=
{\mathbf T}\,e^{i\omega_3\left(\frac{\k_\t^1\cdot x}{v_+^1(t)}-t\right)}
+
{\mathbf R}\,e^{i\omega_2\left(\frac{\k_r^1\cdot x}{v_+^1(t)}-t\right)}
+
{\mathbf R}_1\,e^{i\omega_3\left(\frac{\m_r^1\cdot x}{v_+^1(t)}-t\right)}
+
{\mathbf R}_2\,e^{i\omega_2\left(\frac{\m_r^2\cdot x}{v_+^1(t)}-t\right)},
\qquad x\in\Omega_1,\ t>t_0.
\]

Assume now that $\Gamma$ is a plane with unit normal
\[
\mathbf n=(0,0,1),
\]
so that $x\in\Gamma$ can be written as $x=(x_1,x_2,0)$. Then \eqref{eq:space boundary condition for E} yields
\begin{align*}
0
&=
(\E_2^+-\E_1^+)\times \mathbf n \\
&=
({\mathbf T}_1\times \mathbf n)\,e^{i\omega_3\left(\frac{\m_\t^1\cdot x}{v_+^2(t)}-t\right)}
+
({\mathbf T}_2\times \mathbf n)\,e^{i\omega_2\left(\frac{\m_\t^2\cdot x}{v_+^2(t)}-t\right)} \\
&\qquad
-
({\mathbf T}\times \mathbf n)\,e^{i\omega_3\left(\frac{\k_\t^1\cdot x}{v_+^1(t)}-t\right)}
-
({\mathbf R}\times \mathbf n)\,e^{i\omega_2\left(\frac{\k_r^1\cdot x}{v_+^1(t)}-t\right)} \\
&\qquad
-
({\mathbf R}_1\times \mathbf n)\,e^{i\omega_3\left(\frac{\m_r^1\cdot x}{v_+^1(t)}-t\right)}
-
({\mathbf R}_2\times \mathbf n)\,e^{i\omega_2\left(\frac{\m_r^2\cdot x}{v_+^1(t)}-t\right)},
\qquad x\in\Gamma,\ t>t_0.
\end{align*}

Since $x_3=0$ on $\Gamma$, this equation only contains the first two components of the phase vectors. In particular, it imposes no condition on the third components of
\[
\m_\t^1,\quad \m_\t^2,\quad \m_r^1,\quad \m_r^2.
\]

Write
\begin{align*}
&{\mathbf T}=(T_1,T_2,T_3),\qquad
{\mathbf R}=(R_1,R_2,R_3),\qquad
{\mathbf T}_1=(T_1^1,T_1^2,T_1^3),\qquad
{\mathbf T}_2=(T_2^1,T_2^2,T_2^3),\\
&{\mathbf R}_1=(R_1^1,R_1^2,R_1^3),\qquad
{\mathbf R}_2=(R_2^1,R_2^2,R_2^3).
\end{align*}
Then
\begin{align*}
&{\mathbf T}\times \mathbf n=(T_2,-T_1,0),\qquad
{\mathbf R}\times \mathbf n=(R_2,-R_1,0),\\
&{\mathbf T}_1\times \mathbf n=(T_1^2,-T_1^1,0),\qquad
{\mathbf T}_2\times \mathbf n=(T_2^2,-T_2^1,0),\\
&{\mathbf R}_1\times \mathbf n=(R_1^2,-R_1^1,0),\qquad
{\mathbf R}_2\times \mathbf n=(R_2^2,-R_2^1,0).
\end{align*}
Hence the vector identity above is equivalent to the two scalar equations
\begin{align}
\label{eq:spatial electric equation for 2nd components}
&T_1^2\,e^{i\omega_3\left(\frac{\m_\t^1\cdot x}{v_+^2(t)}-t\right)}
+
T_2^2\,e^{i\omega_2\left(\frac{\m_\t^2\cdot x}{v_+^2(t)}-t\right)}
-
T_2\,e^{i\omega_3\left(\frac{\k_\t^1\cdot x}{v_+^1(t)}-t\right)} 
-
R_2\,e^{i\omega_2\left(\frac{\k_r^1\cdot x}{v_+^1(t)}-t\right)}
-
R_1^2\,e^{i\omega_3\left(\frac{\m_r^1\cdot x}{v_+^1(t)}-t\right)}
-
R_2^2\,e^{i\omega_2\left(\frac{\m_r^2\cdot x}{v_+^1(t)}-t\right)}
=0
\end{align}
and
\begin{align}
\label{eq:spatial electric equation for 1st components}
&-T_1^1\,e^{i\omega_3\left(\frac{\m_\t^1\cdot x}{v_+^2(t)}-t\right)}
-
T_2^1\,e^{i\omega_2\left(\frac{\m_\t^2\cdot x}{v_+^2(t)}-t\right)}
+
T_1\,e^{i\omega_3\left(\frac{\k_\t^1\cdot x}{v_+^1(t)}-t\right)} 
+
R_1\,e^{i\omega_2\left(\frac{\k_r^1\cdot x}{v_+^1(t)}-t\right)}
+
R_1^1\,e^{i\omega_3\left(\frac{\m_r^1\cdot x}{v_+^1(t)}-t\right)}
+
R_2^1\,e^{i\omega_2\left(\frac{\m_r^2\cdot x}{v_+^1(t)}-t\right)}
=0
\end{align}
for all $x=(x_1,x_2,0)\in\Gamma$ and all $t>t_0$.

To simplify notation, define the tangential phase vectors
\begin{align*}
\lambda_1(t)&=\omega_3\frac{(\m_\t^1)_\parallel}{v_+^2(t)},&
\lambda_2(t)&=\omega_2\frac{(\m_\t^2)_\parallel}{v_+^2(t)},&
\lambda_3(t)&=\omega_3\frac{(\k_\t^1)_\parallel}{v_+^1(t)},\\
\lambda_4(t)&=\omega_2\frac{(\k_r^1)_\parallel}{v_+^1(t)},&
\lambda_5(t)&=\omega_3\frac{(\m_r^1)_\parallel}{v_+^1(t)},&
\lambda_6(t)&=\omega_2\frac{(\m_r^2)_\parallel}{v_+^1(t)},
\end{align*}
where $(\,\cdot\,)_\parallel$ denotes the projection onto the $(x_1,x_2)$-plane.

Also set
\[
\alpha=(\alpha_1,\dots,\alpha_6)
:=
\(T_1^2\,e^{-i\omega_3 t},T_2^2\,e^{-i\omega_2 t},-T_2\,e^{-i\omega_3 t},-R_2\,e^{-i\omega_2 t},-R_1^2\,e^{-i\omega_3 t},-R_2^2\,e^{-i\omega_2 t}\),
\]
and
\[
\beta=\(\beta_1,\dots,\beta_6\)
:=
\(-T_1^1\,e^{-i\omega_3 t},-T_2^1\,e^{-i\omega_2 t},T_1\,e^{-i\omega_3 t},R_1\,e^{-i\omega_2 t},R_1^1\,e^{-i\omega_3 t},R_2^1\,e^{-i\omega_2 t}\).
\]
Then \eqref{eq:spatial electric equation for 2nd components} and \eqref{eq:spatial electric equation for 1st components} can be written as
\begin{equation}\label{eq:alpha equation}
\sum_{j=1}^6 \alpha_j(t)\,e^{i\lambda_j(t)\cdot (x_1,x_2)}=0,
\end{equation}
and
\begin{equation}\label{eq:beta equation}
\sum_{j=1}^6 \beta_j(t)\,e^{i\lambda_j(t)\cdot (x_1,x_2)}=0,
\end{equation}
for all $(x_1,x_2)\in\R^2$ and all $t>t_0$.
\cristiancomment{Notice that here the $\lambda_j$'s depend on $t$ and the Lemma \ref{lm:exponentials} is applicable for each fixed $t$. Lemma \ref{lm:exponentials} is stated for $x\in \R$ but here in the last equations the exponents depend on $x_1,x_2$. However, letting $x_1=0$ we applying the Lemma in $x_2$ yields a conclusion for the second components $\lambda_j$ and letting $x_2=0$ and applying the Lemma in $x_1$ yields a conclusion for the first components of $\lambda_j$.    }

Assume now that none of the vectors
\[
{\mathbf T},\ {\mathbf R},\ {\mathbf T}_1,\ {\mathbf T}_2,\ {\mathbf R}_1,\ {\mathbf R}_2
\]
is parallel to $\mathbf n$. Then for each $j=1,\dots,6$ the pair $(\alpha_j,\beta_j)$ is not equal to $(0,0)$. Indeed, each such pair is precisely the first two components of the corresponding vector crossed with $\mathbf n$. However, one may still have $\alpha_j=0$ or $\beta_j=0$ separately, so the two equations must be used together.

We now make the additional assumption that
\[
T_1\neq 0,\qquad T_2\neq 0,\qquad R_1\neq 0,\qquad R_2\neq 0.
\]
To explain this assumption, notice that it could be that $\alpha_1,\alpha_2,\alpha_3,\alpha_4$ are all different from zero and $\alpha_5=\alpha_6=0$; and $\beta_1=\beta_2=\beta_3=\beta_4=0$ and $\beta_5,\beta_6$ are both not zero;
and so $(\alpha_1,\beta_1),\cdots ,(\alpha_6,\beta_6)$ are all different from zero. 
In that case we obtain that the first four exponents in \eqref{eq:alpha equation} are equal, and the last two exponents in \eqref{eq:beta equation} are equal, and therefore there wouldn't be any relationship between the $\k$'s and the $\m_r$'s.
We make the above assumption to avoid this situation. 

Equivalently, the amplitudes ${\mathbf T}$ and ${\mathbf R}$ have both first and second components nonzero. This guarantees that
\[
\alpha_3,\alpha_4,\beta_3,\beta_4
\]
are all nonzero, so the exponentials corresponding to $\k_\t^1$ and $\k_r^1$ appear in both scalar equations \eqref{eq:alpha equation} and \eqref{eq:beta equation}. These two terms will serve as a bridge between the supports of the $\alpha$-equation and the $\beta$-equation.

For each fixed $t>t_0$, define
\[
A=\{j:\alpha_j\neq 0\},\qquad B=\{j:\beta_j\neq 0\}.
\]
Then $A\cup B=\{1,\dots,6\}$, because $(\alpha_j,\beta_j)\neq (0,0)$ for every $j$, and moreover
\[
3,4\in A\cap B.
\]

Apply Lemma \ref{lm:exponentials} to \eqref{eq:alpha equation}, after discarding the zero coefficients. Since all the remaining coefficients are nonzero, we obtain
\[
\lambda_j(t)=\lambda_\ell(t)\qquad \text{for all }j,\ell\in A.
\]
Similarly, applying Lemma \ref{lm:exponentials} to \eqref{eq:beta equation}, we get
\[
\lambda_j(t)=\lambda_\ell(t)\qquad \text{for all }j,\ell\in B.
\]
Because $3\in A\cap B$, it follows that every $\lambda_j(t)$ is equal to $\lambda_3(t)$: if $j\in A$, then $\lambda_j(t)=\lambda_3(t)$ by the first equation; if $j\in B$, then $\lambda_j(t)=\lambda_3(t)$ by the second. Since $A\cup B=\{1,\dots,6\}$, we conclude that
\[
\lambda_1(t)=\lambda_2(t)=\lambda_3(t)=\lambda_4(t)=\lambda_5(t)=\lambda_6(t),
\qquad t>t_0.
\]

Thus all tangential exponents are equal, that is,
\[
\omega_3\frac{(\m_\t^1)_\parallel}{v_+^2(t)}
=
\omega_2\frac{(\m_\t^2)_\parallel}{v_+^2(t)}
=
\omega_3\frac{(\k_\t^1)_\parallel}{v_+^1(t)}
=
\omega_2\frac{(\k_r^1)_\parallel}{v_+^1(t)}
=
\omega_3\frac{(\m_r^1)_\parallel}{v_+^1(t)}
=
\omega_2\frac{(\m_r^2)_\parallel}{v_+^1(t)}.
\]
Equivalently,
\[
(\k_\t^1)_\parallel
=
\frac{\omega_2}{\omega_3}(\k_r^1)_\parallel
=
(\m_r^1)_\parallel
=
\frac{\omega_2}{\omega_3}(\m_r^2)_\parallel,
\]
and
\[
(\m_\t^1)_\parallel
=
\frac{\omega_2}{\omega_3}(\m_\t^2)_\parallel,
\qquad
(\m_\t^2)_\parallel
=
\frac{\omega_3}{\omega_2}\frac{v_+^2(t_0)}{v_+^1(t_0)}(\k_\t^1)_\parallel.
\]
Using \eqref{eq:transmitted and reflected vectors in the first medium}, we obtain
\begin{equation}\label{eq:system showing GSL}
\left\{
\begin{aligned}
(\m_r^1)_\parallel
&=
(\k_\t^1)_\parallel
=
\frac{\omega_1}{\omega_3}\frac{v_+^1(t_0)}{v_-(t_0)}(\k_i)_\parallel,\\[0.4em]
(\m_r^2)_\parallel
&=
(\k_r^1)_\parallel
=
\frac{\omega_1}{\omega_2}\frac{v_+^1(t_0)}{v_-(t_0)}(\k_i)_\parallel,\\[0.4em]
(\m_\t^1)_\parallel
&=
\frac{\omega_1}{\omega_3}\frac{v_+^2(t_0)}{v_-(t_0)}(\k_i)_\parallel,\\[0.4em]
(\m_\t^2)_\parallel
&=
\frac{\omega_1}{\omega_2}\frac{v_+^2(t_0)}{v_-(t_0)}(\k_i)_\parallel.
\end{aligned}
\right.
\end{equation}
These identities determine the first two components of the four unknown vectors and constitute the generalized Snell law for the tangential components.

The tangential phase relations above also simplify the electric boundary equations \eqref{eq:spatial electric equation for 2nd components} and \eqref{eq:spatial electric equation for 1st components}. For each fixed $t>t_0$, the equality
\[
\lambda_1(t)=\cdots=\lambda_6(t)=:\lambda(t)
\]
shows that every term in these equations contains the same nonzero spatial factor
\[
e^{i\lambda(t)\cdot(x_1,x_2)}.
\]
Dividing by this common factor removes the spatial dependence and leaves
\begin{align*}
&T_1^2\,e^{-i\omega_3t}
+
T_2^2\,e^{-i\omega_2t}
-
T_2\,e^{-i\omega_3t} 
-
R_2\,e^{-i\omega_2t}
-
R_1^2\,e^{-i\omega_3t}
-
R_2^2\,e^{-i\omega_2t}
=0
\end{align*}
and
\begin{align*}
&-T_1^1\,e^{-i\omega_3t}
-
T_2^1\,e^{-i\omega_2t}
+
T_1\,e^{-i\omega_3t} 
+
R_1\,e^{-i\omega_2t}
+
R_1^1\,e^{-i\omega_3t}
+
R_2^1\,e^{-i\omega_2t}
=0
\end{align*}
valid for all $t>t_0$.
Grouping the terms with the same temporal frequency gives
\begin{align}
\label{eq:exponential equation for 2nd components in t}
\(T_1^2-T_2-R_1^2\)\,e^{-i\omega_3t}
+
\(T_2^2-R_2-R_2^2\)\,e^{-i\omega_2t}
=0
\end{align}
and
\begin{align}
\label{eq:exponential equation for 1st components in t}
\(T_1^1-T_1-R_1^1\)\,e^{-i\omega_3t}
+
\(T_2^1-R_1-R_2^1\)\,e^{-i\omega_2t}
=0
\end{align}
for all $t>t_0$. 
Each of \eqref{eq:exponential equation for 2nd components in t} and \eqref{eq:exponential equation for 1st components in t} has the form
\[
A e^{-i\omega_3t}+B e^{-i\omega_2t}=0.
\]
If one coefficient vanishes, then the other must also vanish. If both coefficients are nonzero, Lemma~\ref{lm:exponentials}, applied in the variable $t$, implies that $\omega_2=\omega_3$. Thus, when $\omega_2\neq\omega_3$, both coefficients in each of the two equations must vanish; conversely, if either equation has two nonzero coefficients, then necessarily $\omega_2=\omega_3$.

To recover the third components, we use that the vectors $\m_\t^1,\m_\t^2,\m_r^1,\m_r^2$ are unit vectors. For example,
\[
(m_\t^1)_3
=
\pm\sqrt{
1-
\left(\frac{\omega_1}{\omega_3}\frac{v_+^2(t_0)}{v_-(t_0)}(k_i)_1\right)^2
-
\left(\frac{\omega_1}{\omega_3}\frac{v_+^2(t_0)}{v_-(t_0)}(k_i)_2\right)^2},
\]
and
\[
(m_\t^2)_3
=
\pm\sqrt{
1-
\left(\frac{\omega_1}{\omega_2}\frac{v_+^2(t_0)}{v_-(t_0)}(k_i)_1\right)^2
-
\left(\frac{\omega_1}{\omega_2}\frac{v_+^2(t_0)}{v_-(t_0)}(k_i)_2\right)^2}.
\]
The sign is chosen according to the direction of propagation of the corresponding wave. Analogous formulas hold for $(m_r^1)_3$ and $(m_r^2)_3$.

Therefore, \eqref{eq:system showing GSL}, together with the unit-length condition for the normal components, gives the generalized Snell law at the spatial interface.

\begin{theorem}[Generalized Snell law at the spatial interface]\label{thm:GSL_spatial_interface}
Let $\Gamma=\{x_3=0\}$ be a planar interface separating the media $\Omega_1$ and $\Omega_2$, with unit normal
$
\mathbf n=(0,0,1).
$
Let $\tilde{\E_i}$ be the field incident on $\Gamma$ defined at the beginning of Section~\ref{sec:spatial_interface}, and let $\tilde{\E_\t}$ and $\tilde{\E_r}$ be the transmitted and reflected fields defined in \eqref{eq:transmitted field into Omega 2} and \eqref{eq:field reflected back from Gamma}, respectively. The incident wave vectors $\k_\t^1$ and $\k_r^1$ are given by \eqref{eq:transmitted and reflected vectors in the first medium}, and $\m_\t^1,\m_\t^2,\m_r^1,\m_r^2$ are unit vectors.

Assume moreover that none of the vectors
\[
{\mathbf T},\ {\mathbf R},\ {\mathbf T}_1,\ {\mathbf T}_2,\ {\mathbf R}_1,\ {\mathbf R}_2
\]
is parallel to $\mathbf n$, and that the incident amplitudes satisfy
\[
T_1\neq 0,\qquad T_2\neq 0,\qquad R_1\neq 0,\qquad R_2\neq 0.
\]

Then the tangential components of the wave vectors satisfy the system \eqref{eq:system showing GSL}, where $(\,\cdot\,)_\parallel$ denotes the projection onto the tangent plane of $\Gamma$.

Equivalently,
\[
\omega_3\frac{(\m_\t^1)_\parallel}{v_+^2(t_0)}
=
\omega_2\frac{(\m_\t^2)_\parallel}{v_+^2(t_0)}
=
\omega_3\frac{(\k_\t^1)_\parallel}{v_+^1(t_0)}
=
\omega_2\frac{(\k_r^1)_\parallel}{v_+^1(t_0)}
=
\omega_3\frac{(\m_r^1)_\parallel}{v_+^1(t_0)}
=
\omega_2\frac{(\m_r^2)_\parallel}{v_+^1(t_0)}.
\]

The normal components are then determined by the unit-length condition:
\[
(m_\t^1)_3
=
\pm\sqrt{
1-\left|(\m_\t^1)_\parallel\right|^2},
\qquad
(m_\t^2)_3
=
\pm\sqrt{
1-\left|(\m_\t^2)_\parallel\right|^2},
\]
\[
(m_r^1)_3
=
\pm\sqrt{
1-\left|(\m_r^1)_\parallel\right|^2},
\qquad
(m_r^2)_3
=
\pm\sqrt{
1-\left|(\m_r^2)_\parallel\right|^2},
\]
where in each case the sign is chosen according to the physical direction of propagation. Therefore \eqref{eq:system showing GSL}, together with these formulas for the normal components, constitutes the generalized Snell law at the spatial interface $\Gamma$.
\end{theorem}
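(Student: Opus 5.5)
The plan is to collect the computation carried out just before the statement into a proof. We use the tangential boundary condition \eqref{eq:space boundary condition for E} from Theorem~\ref{thm:temporal boundary conditions}, with $\E_2^+=\tilde{\E_\t}$ and $\E_1^+=\tilde{\E_i}+\tilde{\E_r}$, evaluated at points $x=(x_1,x_2,0)\in\Gamma$ and a fixed time $t>t_0$. Since $\mathbf n=(0,0,1)$, taking the cross product with $\mathbf n$ reduces the vector identity to the two scalar exponential sums \eqref{eq:alpha equation} and \eqref{eq:beta equation}. These sums have coefficients $\alpha_j(t),\beta_j(t)$ and tangential phase vectors $\lambda_j(t)$, $j=1,\dots,6$. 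Only the tangential projections $(\cdot)_\parallel$ of the phase vectors enter, because $x_3=0$ on $\Gamma$.

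The key step is to show that all six $\lambda_j(t)$ coincide. We split the indices into the supports $A=\{j:\alpha_j\neq0\}$ and $B=\{j:\beta_j\neq0\}$.
\begin{enumerate}
\item Because no amplitude vector is parallel to $\mathbf n$, each pair $(\alpha_j,\beta_j)$ is nonzero. Hence $A\cup B=\{1,\dots,6\}$.
\item The hypothesis $T_1,T_2,R_1,R_2\neq0$ gives $3,4\in A\cap B$.
\item Apply Lemma~\ref{lm:exponentials} to each scalar equation after discarding the zero coefficients. To handle the two variables, first restrict to $x_1=0$, which identifies the second components, and then to $x_2=0$, which identifies the first components. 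This is the same coordinate-axis device used in Lemma~\ref{lm:equal_exponents_from_vector_equation}.
\item The lemma gives that $\lambda_j$ is constant on $A$ and constant on $B$. Using the bridge index $3$, every $\lambda_j(t)$ equals $\lambda_3(t)$.
\end{enumerate}
One degenerate case needs care: if a support has only one element, its exponential sum forces that coefficient to vanish, which is a contradiction. So every nonempty support has at least two elements and the lemma applies.

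Next, we substitute the relation \eqref{eq:transmitted and reflected vectors in the first medium} for $\k_\t^1$ and $\k_r^1$ into the equalities $\lambda_1=\dots=\lambda_6$. Solving each equality for the tangential part of $\m_\t^1,\m_\t^2,\m_r^1,\m_r^2$ yields \eqref{eq:system showing GSL}.

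The main obstacle is the dependence on $t$. The equalities $\lambda_j(t)=\lambda_3(t)$ involve $v_+^1(t)$ and $v_+^2(t)$, whereas \eqref{eq:system showing GSL} and the displayed equivalent form use the values at $t_0$. Since the unit vectors $\m$ do not depend on $t$, the identity $\omega_3(\m_\t^1)_\parallel/v_+^2(t)=\omega_3(\k_\t^1)_\parallel/v_+^1(t)$ shows that the ratio $v_+^2(t)/v_+^1(t)$ must be constant whenever $(\k_i)_\parallel\neq0$. Letting $t\to t_0^+$ then replaces $v_+^\ell(t)$ by $v_+^\ell(t_0)$, using the one-sided limits defined earlier. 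For the reflected vectors the velocity factors cancel directly. If $(\k_i)_\parallel=0$, all tangential components vanish and the system holds trivially.

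Finally, the normal components follow from $|\m|=1$, which gives $(m)_3=\pm\sqrt{1-|\m_\parallel|^2}$. The sign is fixed by the direction of propagation, which completes the statement.
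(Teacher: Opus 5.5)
\textbf{There is a genuine gap in steps 3--4, and the paper's own derivation has the same gap.}

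Your outline reproduces the paper's derivation almost step for step. Your handling of the passage from $v_+^\ell(t)$ to $v_+^\ell(t_0)$ is actually more careful than the paper's, which makes that switch silently.

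The problem is that Lemma~\ref{lm:exponentials} is false as stated once $N\ge 4$. For example, $e^{ix}-e^{ix}+e^{2ix}-e^{2ix}=0$ for all $x$, with all coefficients nonzero and exponents $1,1,2,2$. Linear independence of distinct exponentials only gives this: group the indices in $A$ by the value of $\lambda_j(t)$; then the coefficients within each group sum to zero. Your degenerate-case remark handles a one-element support, i.e.\ a singleton group. With six terms, however, you can have the groups $\{1,5\}$ and $\{2,3,4,6\}$, with $\alpha_1+\alpha_5=\beta_1+\beta_5=0$. Then $\lambda$ is not constant on $A$, and bridging through index $3$ says nothing about $\lambda_1,\lambda_5$.

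This is not just a flaw in the write-up. Suppose $\omega_2=\omega_3$, which the statement does not exclude and which is the situation of the example in Subsection~\ref{subsec:amplitude_example}. Take $\m_r^1=\m_r^2$ with $(\m_r^1)_\parallel\neq(\k_\t^1)_\parallel$, and $\mathbf R_2=-\mathbf R_1$ with $\mathbf R_1$ not parallel to $\mathbf n$. The two branches of $\tilde{\E_r}$ then cancel identically. So \eqref{eq:space boundary condition for E}, the only input of the argument, holds as soon as it holds for the remaining four terms. All the stated hypotheses can be met, yet \eqref{eq:system showing GSL} fails for $\m_r^1$ and $\m_r^2$.

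\textbf{The missing idea, when $\omega_2\neq\omega_3$:} this hypothesis has to be added, and you should use the identity for all $t>t_0$ before separating in $x$.
\begin{enumerate}
\item With time-independent velocities, as in Section~\ref{sec:amplitude_calculation}, the functions $e^{-i\omega_3 t}$ and $e^{-i\omega_2 t}$ are linearly independent. This splits each of \eqref{eq:alpha equation} and \eqref{eq:beta equation} into two vanishing sums: one over the $\omega_3$-indices $\{1,3,5\}$ and one over the $\omega_2$-indices $\{2,4,6\}$.
\item A vanishing sum of at most three exponentials with nonzero coefficients does force equal exponents. At most three indices cannot be split into two or more groups of size at least two. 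This is the only case that Lemma~\ref{lm:equal_exponents_from_vector_equation} really needs.
\item Bridge through $3\in A\cap B$ in the first class and through $4\in A\cap B$ in the second. This gives $\lambda_1=\lambda_3=\lambda_5$ and $\lambda_2=\lambda_4=\lambda_6$.
\item Finally, $\lambda_3=\lambda_4$ follows from $\omega_3\k_\t^1=\omega_2\k_r^1$.
\end{enumerate}
The rest of your argument then goes through. For genuinely $t$-dependent velocities the same separation needs more care, since the $\lambda_j$ then also vary with $t$.
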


\color{black}
{\color{black}{
\begin{remark}
Note that in the last two equations of (\ref{eq:system showing GSL}), one cannot immediately apply absolute value to obtain a relationship between the frequencies $\omega_2, \omega_3$ and velocities $v_+^2(t_0), v_-(t_0)$ since these equations hold only for the first two components of the vectors.
\end{remark}

\setcounter{equation}{0}
\section{Calculation of the amplitudes of the fields}\label{sec:amplitude_calculation}

In this section,  we the phase information from Sections \ref{sec:temporal_interface} and \ref{sec:spatial_interface} into equations for the unknown field amplitudes. The organization is as follows.
\begin{enumerate}[label=\textup{(\roman*)}]
\item In Subsection~\ref{subsec:amplitude_stages}, we list the four wave-generation stages and fix the notation for the six unknown amplitudes.
\item In Subsection~\ref{subsec:electric_amplitude_equations} we derive the amplitude equations from the electric boundary conditions.
\item In Remark~\ref{rem:electric_equations_do_not_close} we explain why magnetic boundary conditions are needed; Subsection~\ref{subsec:magnetic_amplitude_equations} is then devoted to computing the corresponding magnetic boundary conditions.
\item In Subsections~\ref{subsec:full_amplitude_system} and \ref{subsec:amplitude_coordinate_system} we assemble the full linear system, state the solvability conditions, and write this system in coordinate form.
\item In Subsection~\ref{subsec:amplitude_example} we provide an explicit example at oblique-incidence.
\end{enumerate}

\subsection{Roadmap and wave-generation stages}\label{subsec:amplitude_stages}

The calculation proceeds in the following stages, which we outline here for ease of the reader:
\begin{enumerate}
\item[\bf Stage 1] The prescribed field is the incident field
\[ 
\E_i(x,t)={\mathbf{I}}\, e^{i\omega_1 \left( \frac{\k_i \cdot x}{v_-(t)}-t\right)}, \qquad t < t_0, \qquad x \in \Omega,\quad \omega_1>0,
\]
where all parameters involved in $\E_i$ are known. 
\item[\bf Stage 2] After the material changes at $t=t_0$, this field generates two fields:
a transmitted field
\[ 
\E_t(x,t) = {\mathbf{T}}\, e^{i\omega_3 \left( \frac{\k_t^1\cdot x}{v_+^1(t)}-t\right)} \quad \text{ for } \; x\in\Omega_1, \; t>t_0,	
\]
and a reflected field
\[ \E_r(x,t) = {\mathbf{R}}\, e^{i\omega_2 \left( \frac{\k_r^1\cdot x}{v_+^1(t)}-t\right)} \quad \text{ for } \; x\in\Omega_1, \; t>t_0,	
\]
Here all parameters involved in $\E_t$ and $\E_r$ are unknown, except for the phase vectors $\k_r^1$ and $\k_t^1$, which are calculated in \eqref{eq:transmitted and reflected vectors in the first medium}, and the velocity $v_+^1(t)$. 	
\item[\bf Stage 3] Additional fields appear due to the presence of the spatial interface surface $\Gamma$. The incident field on $\Gamma$, coming from Stage 2 and travelling through $\Omega_1$, is
$\tilde \E_i=\E_t+\E_r$, defined for $x\in \Omega_1$ and $t>t_0$. This field generates a reflected field, denoted by $\tilde \E_r$, which is also defined in $\Omega_1$ for $t>t_0$.
We postulate that $\tilde \E_r$ has the form given in \eqref{eq:field reflected back from Gamma}.
At this point the amplitudes and phase vectors of these fields are unknown.
\item[\bf Stage 4] In the final stage, the fields in $\Omega_1$ strike $\Gamma$ and pass into $\Omega_2$. From Stage 3, the resulting incident field on $\Gamma$ is
$\tilde \E_i+\tilde \E_r$, which yields a transmitted field in $\Omega_2$ for $t>t_0$. This transmitted field is denoted by $\tilde \E_t$ and is postulated to have the form \eqref{eq:transmitted field into Omega 2}.
\end{enumerate}

Figure~\ref{fig:four-stage-fields} summarizes how the fields are generated, combined, and scattered during the four stages.

\begin{figure}[!htbp]
\centering
\resizebox{0.96\linewidth}{!}{%
\begin{tikzpicture}[>=Stealth, line cap=round, line join=round, every node/.style={font=\scriptsize,text=black}]
\fill[gray!8] (-5.8,-2.35) rectangle (5.8,0);
\fill[blue!7] (-5.8,0) rectangle (0,3.75);
\fill[ForestGreen!8] (0,0) rectangle (5.8,3.75);

\draw[->, thick] (-5.95,-2.55) -- (6.05,-2.55) node[right] {space};
\draw[->, thick] (-5.95,-2.55) -- (-5.95,4.0) node[above] {time};
\draw[thick] (-5.8,0) -- (5.8,0) node[right] {$t=t_0$};
\draw[thick] (0,0) -- (0,3.8) node[above] {$\Gamma$};
\draw[dashed, gray] (0,-2.35) -- (0,0);

\node[draw, rounded corners, fill=white, align=center, inner sep=2pt] at (-3.25,3.40)
{$\Omega_1$, $t>t_0,\; (\epsilon_1^+,\mu_1^+)$};
\node[draw, rounded corners, fill=white, align=center, inner sep=2pt] at (3.25,3.40)
{$\Omega_2$, $t>t_0,\; (\epsilon_2^+,\mu_2^+)$};


\coordinate (temporalpoint) at (-2.75,0);
\coordinate (transmittedpoint) at (-1.30,1.30);
\coordinate (reflectedpoint) at (-1.45,0.40);
\coordinate (gammapoint) at (0,1.70);

\draw[->, very thick, blue] (-5.0,-2.02) -- (temporalpoint)
node[pos=0.42, above left, fill=none, inner sep=1pt] {$\E_i$};

\draw[->, very thick, RoyalBlue]
  (temporalpoint) -- (transmittedpoint)
  node[pos=0.48, above, sloped, fill=none, inner sep=1pt]
  {$\E_t$};
\draw[->, very thick, BurntOrange]
  (temporalpoint) -- (reflectedpoint)
  node[pos=0.70, below, sloped, fill=none, inner sep=1pt]
  {$\E_r$};


\draw[->, very thick, violet] (temporalpoint) -- (gammapoint)
  node[pos=0.68, below, sloped, text=black, fill=none, inner sep=1pt]
  {$\widetilde{\E}_i=\E_t+\E_r$};

\draw[->, very thick, BrickRed] (gammapoint) -- (-4.95,3.10)
node[pos=0.56, above, sloped, fill=none, inner sep=1pt]
{$\widetilde{\E}_r$};

\draw[->, very thick, ForestGreen] (gammapoint) -- (4.95,3.10)
node[pos=0.62, above, sloped, fill=none, inner sep=1pt]
{$\widetilde{\E}_{\t}$};

\end{tikzpicture}%
}
\caption{Schematic of the four-stage space-time slab construction. The temporal jump at $t=t_0$ produces the fields $\E_t$ and $\E_r$ in $\Omega_1$. The dashed translated sides complete the parallelogram, whose purple diagonal is the sum $\widetilde{\E}_i=\E_t+\E_r$ incident on the spatial interface $\Gamma$. The reflected field $\widetilde{\E}_r$ in $\Omega_1$ has branches $(\omega_3,\m_r^1)$ and $(\omega_2,\m_r^2)$, while the transmitted field $\widetilde{\E}_{\t}$ in $\Omega_2$ has branches $(\omega_3,\m_\t^1)$ and $(\omega_2,\m_\t^2)$.}
\label{fig:four-stage-fields}
\end{figure}
\FloatBarrier

\subsection{Equations for the electric field amplitudes}\label{subsec:electric_amplitude_equations}

The phase vectors in the fields above are fixed by Sections \ref{sec:temporal_interface} and \ref{sec:spatial_interface}. We now determine the six remaining amplitudes
\[
{\mathbf T},\ {\mathbf R},\ {\mathbf R}_1,\ {\mathbf R}_2,\ {\mathbf T}_1,\ {\mathbf T}_2
\]
from the boundary conditions proved in Theorem \ref{thm:temporal boundary conditions}.
Recall \eqref{eq:bdry condition in Omega 1}:
\begin{align*}\label{eq:bdry condition in Omega 1}
\e_1^+(x,t_0^+)\left( {\mathbf{T}}\, e^{i\omega_3 \left( \frac{\k_t^1\cdot x}{v_+^1(t_0)}-t_0\right)}
+{\mathbf{R}}\, e^{i\omega_2 \left( \frac{\k_r^1\cdot x}{v_+^1(t_0)}-t_0\right)}\right) -
\e^-(x,t_0^-)
{\mathbf{I}}\, e^{i\omega_1 \left( \frac{\k_i \cdot x}{v_-(t_0)}-t_0\right)}=0
\end{align*}
for all $x=(x_1, x_2, x_3) \in \Omega_1$.
Using \eqref{eq:transmitted and reflected vectors in the first medium}, namely,
\begin{align*}
\k_t^1
&=
\dfrac{\omega_1}{\omega_3}  \frac{v_+^1(t_0)}{v_-(t_0)}\k_i,
\qquad \text{and }
\k_r^1
=
\dfrac{\omega_1}{\omega_2}  \frac{v_+^1(t_0)}{v_-(t_0)}\k_i,
\end{align*}
in \eqref{eq:transmitted and reflected vectors in the first medium}, the spatial exponential factors cancel. We therefore obtain the first equation for the amplitudes $\mathbf{T}$, $\mathbf{R}$, and $\mathbf{I}$:
\begin{align*}
\e_1^+(x,t_0^+)\left( {\mathbf{T}}\, e^{-i\omega_3 t_0}
+{\mathbf{R}}\, e^{-i\omega_2 t_0}\right) -
\e^-(x,t_0^-)
{\mathbf{I}}\, e^{-i\omega_1 t_0}=0
\end{align*}
for all $x\in \Omega_1$, where the unknowns are $\mathbf T$ and $\mathbf R$.

We next use the boundary condition \eqref{second_new_temporal_bc}, which we recall as
\begin{align*}
\e^+_2(x,t_0^+)\E_2^+(x,t_0^+)-\e^-(x,t_0^-)\E^-(x,t_0^-)=0\qquad \forall x\in \Omega_2,
\end{align*}
where 
\[
\E_2^+
=
{\mathbf T}_1\,e^{i\omega_3\left(\frac{\m_\t^1\cdot x}{v_+^2(t)}-t\right)}
+
{\mathbf T}_2\,e^{i\omega_2\left(\frac{\m_\t^2\cdot x}{v_+^2(t)}-t\right)},
\qquad x\in\Omega_2,\ t>t_0,
\]
and
\[
\E^-={\mathbf{I}}\, e^{i\omega_1 \left( \frac{\k_i \cdot x}{v_-(t_0)}-t_0\right)}.
\]
Thus we obtain
\[
\e^+_2(x,t_0^+)\({\mathbf T}_1\,e^{i\omega_3\left(\frac{\m_\t^1\cdot x}{v_+^2(t_0)}-t_0\right)}
+
{\mathbf T}_2\,e^{i\omega_2\left(\frac{\m_\t^2\cdot x}{v_+^2(t_0)}-t_0\right)}\)-\e^-(x,t_0^-){\mathbf{I}}\, e^{i\omega_1 \left( \frac{\k_i \cdot x}{v_-(t_0)}-t_0\right)}=0,
\]
for all $x\in \Omega_2$.
Proceeding as before, we obtain the following relation among the coefficients in the exponents:
\begin{align*}
\omega_3  \frac{\m_\t^1}{v_+^2(t_0)}
&=
\omega_2  \frac{\m_\t^2}{v_+^2(t_0)}
=
\omega_1  \frac{\k_i }{v_-(t_0)}.
\end{align*}
Equivalently,
\begin{align}\label{eq:transmitted and incident vectors in the second medium}
\m_\t^1
&=
\dfrac{\omega_1}{\omega_3}  \frac{v_+^2(t_0)}{v_-(t_0)}\k_i,
\qquad \text{and}\qquad
\m_\t^2
=
\dfrac{\omega_1}{\omega_2}  \frac{v_+^2(t_0)}{v_-(t_0)}\k_i.
\end{align}
Hence, using these relations, we can simplify the exponentials and obtain the equation
\[
\e^+_2(x,t_0^+)\({\mathbf T}_1\,e^{-i\omega_3 t_0}
+
{\mathbf T}_2\,e^{-i\omega_2 t_0}\)-\e^-(x,t_0^-){\mathbf{I}}\, e^{-i\omega_1 t_0}=0,
\]
for all $x\in \Omega_2$, where the unknowns are ${\mathbf T}_1$ and ${\mathbf T}_2$.

We now apply the boundary condition \eqref{eq:space boundary condition for E} with
\[
\E_2^+=\tilde{\E_\t},
\qquad
\E_1^+=\tilde{\E_i}+\tilde{\E_r}.
\]
That is, with
\[
\E_2^+
=
{\mathbf T}_1\,e^{i\omega_3\left(\frac{\m_\t^1\cdot x}{v_+^2(t)}-t\right)}
+
{\mathbf T}_2\,e^{i\omega_2\left(\frac{\m_\t^2\cdot x}{v_+^2(t)}-t\right)},
\qquad x\in\Omega_2,\ t>t_0,
\]
and
\[
\E_1^+
=
{\mathbf T}\,e^{i\omega_3\left(\frac{\k_\t^1\cdot x}{v_+^1(t)}-t\right)}
+
{\mathbf R}\,e^{i\omega_2\left(\frac{\k_r^1\cdot x}{v_+^1(t)}-t\right)}
+
{\mathbf R}_1\,e^{i\omega_3\left(\frac{\m_r^1\cdot x}{v_+^1(t)}-t\right)}
+
{\mathbf R}_2\,e^{i\omega_2\left(\frac{\m_r^2\cdot x}{v_+^1(t)}-t\right)},
\qquad x\in\Omega_1,\ t>t_0.
\]
Using \eqref{eq:transmitted and incident vectors in the second medium}, we obtain
\begin{align*}
\E_2^+
&=
{\mathbf T}_1\,e^{i\omega_3\left(\frac{\(\tfrac{\omega_1}{\omega_3}  \frac{v_+^2(t_0)}{v_-(t_0)}\k_i\)\cdot x}{v_+^2(t)}-t\right)}
+
{\mathbf T}_2\,e^{i\omega_2\left(\frac{\(\tfrac{\omega_1}{\omega_2}  \frac{v_+^2(t_0)}{v_-(t_0)}\k_i\)\cdot x}{v_+^2(t)}-t\right)}\\
&=
{\mathbf T}_1\,
e^{i\omega_1\left(\frac{\(  \frac{v_+^2(t_0)}{v_-(t_0)}\k_i\)\cdot x}{v_+^2(t)}\right)}\,e^{-i\,\omega_3\,t}
+
{\mathbf T}_2\,
e^{i\omega_1\left(\frac{ \( \frac{v_+^2(t_0)}{v_-(t_0)}\k_i\)\cdot x}{v_+^2(t)}\right)}\,e^{-i\,\omega_2\,t}\\
&=
\({\mathbf T}_1\,
e^{-i\,\omega_3\,t}
+
{\mathbf T}_2\,
e^{-i\,\omega_2\,t}\)\,e^{i\omega_1 \frac{v_+^2(t_0)}{v_-(t_0)v_+^2(t)}\k_i\cdot x} .
\end{align*}
By the preceding calculations, the only unknown phase vectors at this point are $\m_r^1$ and $\m_r^2$.

Now, as before, we assume that $\Gamma$ is a plane with unit normal
\[
\mathbf{n}=(0,0,1),
\]
so every $x\in\Gamma$ can be written as $x=(x_1,x_2,0)$. Then \eqref{eq:space boundary condition for E} yields
\begin{align}\label{vector-identity}
\begin{split}
0
&=
\(\E_2^+-\E_1^+\)\times \mathbf{n} \\
&=
({\mathbf T}_1\times \mathbf{n})\,e^{i\omega_3\left(\frac{\m_\t^1\cdot x}{v_+^2(t)}-t\right)}
+
({\mathbf T}_2\times \mathbf{n})\,e^{i\omega_2\left(\frac{\m_\t^2\cdot x}{v_+^2(t)}-t\right)} \\
&\qquad
-
({\mathbf T}\times \mathbf{n})\,e^{i\omega_3\left(\frac{\k_\t^1\cdot x}{v_+^1(t)}-t\right)}
-
({\mathbf R}\times \mathbf{n})\,e^{i\omega_2\left(\frac{\k_r^1\cdot x}{v_+^1(t)}-t\right)} \\
&\qquad
-
({\mathbf R}_1\times \mathbf{n})\,e^{i\omega_3\left(\frac{\m_r^1\cdot x}{v_+^1(t)}-t\right)}
-
({\mathbf R}_2\times \mathbf{n})\,e^{i\omega_2\left(\frac{\m_r^2\cdot x}{v_+^1(t)}-t\right)},
\qquad x\in\Gamma,\ t>t_0.
\end{split}
\end{align}

Since $x_3=0$ on $\Gamma$, this equation involves only the first two components of the phase vectors. In particular, it imposes no condition on the third components of the unknown vectors $\m_r^1$ and $\m_r^2$.

We write the amplitudes as
\begin{align*}
&{\mathbf T}=(T_1,T_2,T_3),\qquad
{\mathbf R}=(R_1,R_2,R_3),\qquad
{\mathbf T}_1=(T_1^1,T_1^2,T_1^3),\qquad
{\mathbf T}_2=(T_2^1,T_2^2,T_2^3),\\
&{\mathbf R}_1=(R_1^1,R_1^2,R_1^3),\qquad
{\mathbf R}_2=(R_2^1,R_2^2,R_2^3).
\end{align*}
As before, the vector identity (\ref{vector-identity}) is equivalent to the two scalar equations
\begin{align}
\label{eq:exponential equation for 2nd components}
&T_1^2\,e^{i\omega_3\left(\frac{\m_\t^1\cdot x}{v_+^2(t)}-t\right)}
+
T_2^2\,e^{i\omega_2\left(\frac{\m_\t^2\cdot x}{v_+^2(t)}-t\right)}
-
T_2\,e^{i\omega_3\left(\frac{\k_\t^1\cdot x}{v_+^1(t)}-t\right)} 
-
R_2\,e^{i\omega_2\left(\frac{\k_r^1\cdot x}{v_+^1(t)}-t\right)}
-
R_1^2\,e^{i\omega_3\left(\frac{\m_r^1\cdot x}{v_+^1(t)}-t\right)}
-
R_2^2\,e^{i\omega_2\left(\frac{\m_r^2\cdot x}{v_+^1(t)}-t\right)}
=0
\end{align}
and
\begin{align}
\label{eq:exponential equation for 1st components}
&-T_1^1\,e^{i\omega_3\left(\frac{\m_\t^1\cdot x}{v_+^2(t)}-t\right)}
-
T_2^1\,e^{i\omega_2\left(\frac{\m_\t^2\cdot x}{v_+^2(t)}-t\right)}
+
T_1\,e^{i\omega_3\left(\frac{\k_\t^1\cdot x}{v_+^1(t)}-t\right)} 
+
R_1\,e^{i\omega_2\left(\frac{\k_r^1\cdot x}{v_+^1(t)}-t\right)}
+
R_1^1\,e^{i\omega_3\left(\frac{\m_r^1\cdot x}{v_+^1(t)}-t\right)}
+
R_2^1\,e^{i\omega_2\left(\frac{\m_r^2\cdot x}{v_+^1(t)}-t\right)}
=0
\end{align}
for all $x=(x_1,x_2,0)\in\Gamma$ and all $t>t_0$.

\begin{remark}[Why the electric boundary conditions are not enough]\label{rem:electric_equations_do_not_close}

The phase vectors have already been determined by the temporal and spatial Snell laws, up to the choice of the physical signs of the normal components at the spatial interface. Thus the remaining unknowns are the six vector amplitudes.
Equivalently, there are $6\times 3=18$ scalar unknowns; in the non-degenerate case $\omega_2\neq\omega_3$, the electric boundary conditions alone provide at most $3+3+4+2=12$ scalar algebraic equations, so they cannot close the amplitude system.

The boundary conditions for the electric field give necessary relations among these amplitudes, but they do not by themselves determine all of them. Indeed, the temporal boundary condition in $\Omega_1$ gives only the equation
\[
\e_1^+(x,t_0^+)\left({\mathbf T}e^{-i\omega_3t_0}+{\mathbf R}e^{-i\omega_2t_0}\right)
=
\e^-(x,t_0^-){\mathbf I}e^{-i\omega_1t_0},
\qquad x\in\Omega_1,
\]
while the temporal boundary condition in $\Omega_2$ gives only that
\[
\e_2^+(x,t_0^+)\left({\mathbf T}_1e^{-i\omega_3t_0}+{\mathbf T}_2e^{-i\omega_2t_0}\right)
=
\e^-(x,t_0^-){\mathbf I}e^{-i\omega_1t_0},
\qquad x\in\Omega_2.
\]
These two equations determine sums of amplitudes, but not the individual amplitudes. In particular, the first equation does not separate ${\mathbf T}$ from ${\mathbf R}$, and the second equation does not separate ${\mathbf T}_1$ from ${\mathbf T}_2$.

At the spatial interface $\Gamma$, the tangential electric boundary condition
\begin{equation*}
\(\E_2^+(y,t)-\E_1^+(y,t)\)\times \mathbf{n}(y)=0 \quad \text{for each } y\in \Gamma \text{ and } t_0<t<b,
\end{equation*}
 gives two scalar equations. After the phase matching already obtained, these equations relate the tangential components of the amplitudes on the two sides of $\Gamma$. Similarly, the normal displacement condition
\[
\left(\e_2^+(y,t)\E_2^+(y,t)-\e_1^+(y,t)\E_1^+(y,t)\right)\cdot {\mathbf n}(y)=0,
\qquad y\in\Gamma,\ t>t_0,
\]
gives an additional scalar relation for the normal components. Hence the electric boundary conditions at $\Gamma$ impose relations between the fields on the two sides of the spatial interface, but they still do not close the full system for the six vector amplitudes.
\end{remark}

\subsection{Magnetic fields their boundary conditions}\label{subsec:magnetic_amplitude_equations}

The next step is to calculate the magnetic fields corresponding to each electric field component and then use the corresponding boundary conditions proved in Theorem \ref{thm:temporal boundary conditions}. The calculations below use \eqref{eq:faraday_minus} in the incident region, \eqref{eq:faraday_plus_Omega1} in $\Omega_1$ after the temporal jump, and \eqref{eq:faraday_plus_Omega2} in $\Omega_2$ after the temporal jump. Note that, in a homogeneous region where the material parameters are constant (or are evaluated at the relevant interface value), a plane-wave electric field of the form
\[
\E_A(x,t)={\mathbf A}\,e^{i\omega\left(\frac{{\mathbf q}\cdot x}{v_j}-t\right)}
\]
has an associated magnetic field obtained from Faraday's law,
\[
\nabla\times \E_A=-\frac{1}{c}\frac{\partial}{\partial t}(\mu_j\H_A),
\]
namely
\[
\H_A(x,t)=-\frac{c}{\mu_jv_j}\,{\mathbf A}\times {\mathbf q}\;e^{i\omega\left(\frac{{\mathbf q}\cdot x}{v_j}-t\right)},
\]
up to an integration term depending only on $x$, which must be fixed by the physical assumptions. 

Let us calculate first the magnetic fields. 

{\bf Stage 1}
Using the incident-region equation \eqref{eq:faraday_minus} when $t<t_0$, seek $\mathbf{H}_i$ satisfying
\[\nabla \times \E_i = -\dfrac{\mu_-}{c} \dfrac{\partial \mathbf{H}_i}{\partial t}.\]
Since
\[\E_i(x,t)= {\mathbf{I}}\, e^{i\omega_1 \left(\frac{\k_i \cdot x}{v_-}-t\right)}\]
we see that
\[\nabla \times \E_i =-i\omega_1 \left( {\mathbf{I}}\, \times \dfrac{\k_i}{v_-}\right) e^{i\omega_1 \left( \frac{\k_i\cdot x}{v_-}-t\right)}\]
and so integrating in time yields
\begin{align}\label{Hi_calculated}
\mathbf{H}_i = -\dfrac{c}{\mu_-}\int \nabla \times \E_i \,dt=-\dfrac{c}{\mu_-}\E_i \times \dfrac{\k_i}{v_-}
\end{align}
plus a field depending only on $x$ which we assume to be zero.

{\bf Stage 2}
For the temporally generated fields in $\Omega_1$, we appeal to \eqref{eq:faraday_plus_Omega1}. Suppose $t>t_0$. 
Since
\[\E_r(x,t)={\mathbf R}\,e^{i\omega_2 \left(\frac{\k_r^1\cdot x}{v_+}-t\right)}\]
we similarly find that
\begin{align}\label{Hr_calculated}
\mathbf{H}_r = -\dfrac{c}{\mu_+} \E_r \times \dfrac{\k_r^1}{v_+}
\end{align}
plus a field depending only on $x$ which we also assume to be zero.
Finally, since
\[\E_t(x,t) = {\mathbf T}\, e^{i\omega_3 \left(\frac{\k_t^1\cdot x}{v_+}-t\right)}\]
we find that
\begin{align}\label{Ht_calculated}
\mathbf{H}_t = -\dfrac{c}{\mu_+} \E_t \times \dfrac{\k_t^1}{v_+}.
\end{align}

{\bf Stage 3} Suppose $t>t_0$ and $x\in \Omega_1$.
Recall that incident into $\Gamma$ from $\Omega_1$ is
\[\tilde{\E_i}=\E_t+\E_r.\]
generating a reflected field in $\Omega_1$ of the form
\[\tilde{\E_r}(x,t)={\mathbf R}_1e^{i\omega_3\left(\frac{\m_r^1\cdot x}{v_+^1(t)}-t\right)}+{\mathbf R}_2e^{i\omega_2\left(\frac{\m_r^2\cdot x}{v_+^1(t)}-t\right)}\]
and a transmitted field in $\Omega_2$ of the form
\[\tilde{\E_\t}(x,t)={\mathbf T}_1e^{i\omega_3\left(\frac{\m_\t^1\cdot x}{v_+^2(t)}-t\right)}+{\mathbf T}_2e^{i\omega_2\left(\frac{\m_\t^2\cdot x}{v_+^2(t)}-t\right)}.\]
Using the same calculation as in Stage 2, with \eqref{eq:faraday_plus_Omega1} for the reflected field in $\Omega_1$ and \eqref{eq:faraday_plus_Omega2} for the transmitted field in $\Omega_2$, and taking the integration terms depending only on $x$ to be zero, the corresponding magnetic fields are

\[
\begin{aligned}
\tilde{\H_r}(x,t)
&=-\frac{c}{\mu_1^+}\left[
\frac{{\mathbf R}_1\times \m_r^1}{v_+^1(t)}e^{i\omega_3\left(\frac{\m_r^1\cdot x}{v_+^1(t)}-t\right)}
+\frac{{\mathbf R}_2\times \m_r^2}{v_+^1(t)}e^{i\omega_2\left(\frac{\m_r^2\cdot x}{v_+^1(t)}-t\right)}
\right],\\
\tilde{\H_\t}(x,t)
&=-\frac{c}{\mu_2^+}\left[
\frac{{\mathbf T}_1\times \m_\t^1}{v_+^2(t)}e^{i\omega_3\left(\frac{\m_\t^1\cdot x}{v_+^2(t)}-t\right)}
+\frac{{\mathbf T}_2\times \m_\t^2}{v_+^2(t)}e^{i\omega_2\left(\frac{\m_\t^2\cdot x}{v_+^2(t)}-t\right)}
\right].
\end{aligned}
\]
Here $\mu_1^+$ and $\mu_2^+$ denote the corresponding permeability values in $\Omega_1$ and $\Omega_2$, or their values at the relevant interface when the coefficients are evaluated there.
Thus, for the spatial boundary conditions after the interaction with $\Gamma$, we will take
\[
\H_1^+=\H_t+\H_r+\tilde{\H_r},
\qquad
\H_2^+=\tilde{\H_\t}.
\]
Once the magnetic fields are written in terms of the same electric field amplitudes, we can impose the temporal magnetic field boundary conditions (\ref{first_new_temporal_bc_forH}) and (\ref{second_new_temporal_bc_forH})
as well as the spatial magnetic field boundary conditions (\ref{eq:space boundary condition})
and (\ref{eq:bdry condition for H with dot product}).
 
 Using the temporal magnetic boundary condition \eqref{first_new_temporal_bc_forH} in $\Omega_1$, we obtain
\[
\begin{aligned}
0
&=\mu_1^+(x,t_0^+)(\H_t+\H_r)(x,t_0^+)-\mu^-(x,t_0^-)\H_i(x,t_0^-)\\
&=-c\left[
\frac{{\mathbf T}\times \k_t^1}{v_+^1(t_0)}e^{i\omega_3\left(\frac{\k_t^1\cdot x}{v_+^1(t_0)}-t_0\right)}
+\frac{{\mathbf R}\times \k_r^1}{v_+^1(t_0)}e^{i\omega_2\left(\frac{\k_r^1\cdot x}{v_+^1(t_0)}-t_0\right)}
\right]\\
&\qquad
+c\frac{{\mathbf I}\times \k_i}{v_-(t_0)}e^{i\omega_1\left(\frac{\k_i\cdot x}{v_-(t_0)}-t_0\right)}.
\end{aligned}
\]
The temporal phase matching gives
\[
\omega_3\frac{\k_t^1}{v_+^1(t_0)}
=\omega_2\frac{\k_r^1}{v_+^1(t_0)}
=\omega_1\frac{\k_i}{v_-(t_0)},
\]
so the common spatial exponential can be factored out. 


Thus, at the temporal interface, after using the phase matching in the spatial variables, the magnetic field boundary conditions \eqref{first_new_temporal_bc_forH} and \eqref{second_new_temporal_bc_forH} become
\begin{align*}
\frac{{\mathbf T}\times \k_t^1}{v_+^1(t_0)}e^{-i\omega_3t_0}
+\frac{{\mathbf R}\times \k_r^1}{v_+^1(t_0)}e^{-i\omega_2t_0}
&=
\frac{{\mathbf I}\times \k_i}{v_-(t_0)}e^{-i\omega_1t_0},
\qquad x\in\Omega_1,\\
\frac{{\mathbf T}_1\times \m_\t^1}{v_+^2(t_0)}e^{-i\omega_3t_0}
+\frac{{\mathbf T}_2\times \m_\t^2}{v_+^2(t_0)}e^{-i\omega_2t_0}
&=
\frac{{\mathbf I}\times \k_i}{v_-(t_0)}e^{-i\omega_1t_0},
\qquad x\in\Omega_2.
\end{align*}
At the spatial interface $\Gamma$, substitute $\H_2^+=\tilde{\H_\t}$ and $\H_1^+=\H_t+\H_r+\tilde{\H_r}$ into (\ref{eq:space boundary condition}). After using the spatial phase matching on $\Gamma$, all terms have the same tangential spatial exponential, while the time factors are either $e^{-i\omega_3t}$ or $e^{-i\omega_2t}$. Dividing out the common nonzero spatial factor and the common factor $-c$, we obtain
\[
\begin{aligned}
0
&=\left[
\left(
\frac{{\mathbf T}_1\times \m_\t^1}{\mu_2^+(y,t)v_+^2(t)}
-\frac{{\mathbf T}\times \k_t^1}{\mu_1^+(y,t)v_+^1(t)}
-\frac{{\mathbf R}_1\times \m_r^1}{\mu_1^+(y,t)v_+^1(t)}
\right)e^{-i\omega_3t}\right.\\
&\qquad\left.
+\left(
\frac{{\mathbf T}_2\times \m_\t^2}{\mu_2^+(y,t)v_+^2(t)}
-\frac{{\mathbf R}\times \k_r^1}{\mu_1^+(y,t)v_+^1(t)}
-\frac{{\mathbf R}_2\times \m_r^2}{\mu_1^+(y,t)v_+^1(t)}
\right)e^{-i\omega_2t}
\right]\times {\mathbf n}(y).
\end{aligned}
\]
In the non-degenerate case $\omega_2\neq\omega_3$, the two time exponentials are linearly independent on any interval of times, so their coefficients must vanish separately. Therefore the tangential magnetic field boundary condition \eqref{eq:space boundary condition} gives
\begin{align*}
\left(
\frac{{\mathbf T}_1\times \m_\t^1}{\mu_2^+(y,t)v_+^2(t)}
-\frac{{\mathbf T}\times \k_t^1}{\mu_1^+(y,t)v_+^1(t)}
-\frac{{\mathbf R}_1\times \m_r^1}{\mu_1^+(y,t)v_+^1(t)}
\right)\times {\mathbf n}(y)&=0,\\
\left(
\frac{{\mathbf T}_2\times \m_\t^2}{\mu_2^+(y,t)v_+^2(t)}
-\frac{{\mathbf R}\times \k_r^1}{\mu_1^+(y,t)v_+^1(t)}
-\frac{{\mathbf R}_2\times \m_r^2}{\mu_1^+(y,t)v_+^1(t)}
\right)\times {\mathbf n}(y)&=0,
\end{align*}
for $y\in\Gamma$ and $t>t_0$. Similarly, in the normal magnetic flux condition (\ref{eq:bdry condition for H with dot product}),the factors $\mu_2^+$ and $\mu_1^+$ multiply the magnetic fields and cancel the permeability denominators in the formulas for $\H_2^+$ and $\H_1^+$. After the same phase cancellation and separation of the two time exponentials, the normal magnetic flux condition \eqref{eq:bdry condition for H with dot product} gives
\begin{align*}
\left(
\frac{{\mathbf T}_1\times \m_\t^1}{v_+^2(t)}
-\frac{{\mathbf T}\times \k_t^1}{v_+^1(t)}
-\frac{{\mathbf R}_1\times \m_r^1}{v_+^1(t)}
\right)\cdot {\mathbf n}(y)&=0,\\
\left(
\frac{{\mathbf T}_2\times \m_\t^2}{v_+^2(t)}
-\frac{{\mathbf R}\times \k_r^1}{v_+^1(t)}
-\frac{{\mathbf R}_2\times \m_r^2}{v_+^1(t)}
\right)\cdot {\mathbf n}(y)&=0,
\end{align*}
for $y\in\Gamma$ and $t>t_0$. If $\omega_2=\omega_3$, the corresponding $\omega_2$ and $\omega_3$ terms must instead be combined before separating amplitude equations.
These equations provide the additional relations that are missing from the electric boundary conditions alone.

Finally, the source-free divergence equations \eqref{eq:divergence_free_electric_displacement} and \eqref{eq:divergence_free_magnetic_induction} should also be imposed on each plane-wave component. Equivalently, these are Maxwell's equations \eqref{divergence E zero} and \eqref{divergence B zero} with $\rho=0$, written through the constitutive relations \eqref{eq:constitutive} as $\nabla\cdot(\e\E)=0$ and $\nabla\cdot(\mu\H)=0$. 

For example, if $\E_A={\mathbf A}e^{i\omega({\mathbf q}\cdot x/v_j-t)}$ and $\e_j$ is independent of $x$, then
\[
\nabla\cdot(\e_j\E_A)
=i\e_j\frac{\omega}{v_j}({\mathbf q}\cdot{\mathbf A})e^{i\omega({\mathbf q}\cdot x/v_j-t)},
\]
so the divergence equation forces ${\mathbf q}\cdot{\mathbf A}=0$. Applying this to each electric plane-wave component gives the transversality conditions
\[
\k_i\cdot {\mathbf I}=0,
\qquad
\k_\t^1\cdot {\mathbf T}=0,
\qquad
\k_r^1\cdot {\mathbf R}=0,
\qquad
\m_\t^1\cdot {\mathbf T}_1=0,
\qquad
\m_\t^2\cdot {\mathbf T}_2=0,
\qquad
\m_r^1\cdot {\mathbf R}_1=0,
\qquad
\m_r^2\cdot {\mathbf R}_2=0.
\]
The first condition is a compatibility condition on the prescribed incident field, while the remaining conditions are additional equations for the unknown amplitudes.

\subsection{Assembly of the full linear amplitude system}\label{subsec:full_amplitude_system}

Collecting all of the preceding relations, in the non-degenerate case $\omega_2\neq\omega_3$ the amplitudes must solve the following linear system. 


\begin{align*}
\intertext{Temporal electric field equations from \eqref{first_new_temporal_bc} and \eqref{second_new_temporal_bc}:}
\e_1^+(x,t_0^+)\left({\mathbf T}e^{-i\omega_3t_0}+{\mathbf R}e^{-i\omega_2t_0}\right)
&=\e^-(x,t_0^-){\mathbf I}e^{-i\omega_1t_0}, \qquad x \in \Omega_1\\
\e_2^+(x,t_0^+)\left({\mathbf T}_1e^{-i\omega_3t_0}+{\mathbf T}_2e^{-i\omega_2t_0}\right)
&=\e^-(x,t_0^-){\mathbf I}e^{-i\omega_1t_0},\qquad x\in \Omega_2.
\intertext{Tangential electric field equations from \eqref{eq:space boundary condition for E}: for $y\in\Gamma$ and $t>t_0$,}
\left({\mathbf T}_1-{\mathbf T}-{\mathbf R}_1\right)\times {\mathbf n}(y)&=0,\\
\left({\mathbf T}_2-{\mathbf R}-{\mathbf R}_2\right)\times {\mathbf n}(y)&=0,
\intertext{Normal electric field equations from \eqref{eq:bdry condition for E with dot product}: for $y\in\Gamma$ and $t>t_0$,}
\left(\e_2^+(y,t){\mathbf T}_1-\e_1^+(y,t)\left({\mathbf T}+{\mathbf R}_1\right)\right)\cdot {\mathbf n}(y)&=0,\\
\left(\e_2^+(y,t){\mathbf T}_2-\e_1^+(y,t)\left({\mathbf R}+{\mathbf R}_2\right)\right)\cdot {\mathbf n}(y)&=0,
\intertext{Temporal magnetic field equations from \eqref{first_new_temporal_bc_forH} and \eqref{second_new_temporal_bc_forH}:}
\frac{{\mathbf T}\times \k_t^1}{v_+^1(t_0)}e^{-i\omega_3t_0}
+\frac{{\mathbf R}\times \k_r^1}{v_+^1(t_0)}e^{-i\omega_2t_0}
&=\frac{{\mathbf I}\times \k_i}{v_-(t_0)}e^{-i\omega_1t_0}, \qquad x \in \Omega_1\\
\frac{{\mathbf T}_1\times \m_\t^1}{v_+^2(t_0)}e^{-i\omega_3t_0}
+\frac{{\mathbf T}_2\times \m_\t^2}{v_+^2(t_0)}e^{-i\omega_2t_0}
&=\frac{{\mathbf I}\times \k_i}{v_-(t_0)}e^{-i\omega_1t_0},\qquad x\in\Omega_2.
\intertext{Tangential magnetic field equations from \eqref{eq:space boundary condition}: for $y\in\Gamma$ and $t>t_0$,}
\left(
\frac{{\mathbf T}_1\times \m_\t^1}{\mu_2^+(y,t)v_+^2(t)}
-\frac{{\mathbf T}\times \k_t^1}{\mu_1^+(y,t)v_+^1(t)}
-\frac{{\mathbf R}_1\times \m_r^1}{\mu_1^+(y,t)v_+^1(t)}
\right)\times {\mathbf n}(y)&=0,\\
\left(
\frac{{\mathbf T}_2\times \m_\t^2}{\mu_2^+(y,t)v_+^2(t)}
-\frac{{\mathbf R}\times \k_r^1}{\mu_1^+(y,t)v_+^1(t)}
-\frac{{\mathbf R}_2\times \m_r^2}{\mu_1^+(y,t)v_+^1(t)}
\right)\times {\mathbf n}(y)&=0.
\intertext{Normal magnetic flux equations from \eqref{eq:bdry condition for H with dot product}: for $y\in\Gamma$ and $t>t_0$,}
\left(
\frac{{\mathbf T}_1\times \m_\t^1}{v_+^2(t)}
-\frac{{\mathbf T}\times \k_t^1}{v_+^1(t)}
-\frac{{\mathbf R}_1\times \m_r^1}{v_+^1(t)}
\right)\cdot {\mathbf n}(y)&=0,\\
\left(
\frac{{\mathbf T}_2\times \m_\t^2}{v_+^2(t)}
-\frac{{\mathbf R}\times \k_r^1}{v_+^1(t)}
-\frac{{\mathbf R}_2\times \m_r^2}{v_+^1(t)}
\right)\cdot {\mathbf n}(y)&=0.
\end{align*}
\[
\begin{gathered}
\text{Transversality conditions from \eqref{eq:divergence_free_electric_displacement}
and \eqref{eq:divergence_free_magnetic_induction}:}\\
\k_i\cdot {\mathbf I}=\k_\t^1\cdot {\mathbf T}=\k_r^1\cdot {\mathbf R}=0,\\
\m_\t^1\cdot {\mathbf T}_1=\m_\t^2\cdot {\mathbf T}_2
=\m_r^1\cdot {\mathbf R}_1=\m_r^2\cdot {\mathbf R}_2=0.
\end{gathered}
\]

Equivalently, writing
\[
\begin{gathered}
{\mathbf I}=(I_1,I_2,I_3),\qquad
{\mathbf T}=(T_1,T_2,T_3),\qquad
{\mathbf R}=(R_1,R_2,R_3),\\
{\mathbf T}_1=(T_1^1,T_1^2,T_1^3),\qquad
{\mathbf T}_2=(T_2^1,T_2^2,T_2^3),\qquad
{\mathbf R}_1=(R_1^1,R_1^2,R_1^3),\qquad
{\mathbf R}_2=(R_2^1,R_2^2,R_2^3),\\
\k_i=(k_{i,1},k_{i,2},k_{i,3}),\qquad
\k_\t^1=(k_{\t,1}^1,k_{\t,2}^1,k_{\t,3}^1),\qquad
\k_r^1=(k_{r,1}^1,k_{r,2}^1,k_{r,3}^1),\\
\m_\t^a=(m_{\t,1}^a,m_{\t,2}^a,m_{\t,3}^a),\qquad
\m_r^a=(m_{r,1}^a,m_{r,2}^a,m_{r,3}^a),\quad a=1,2,
\end{gathered}
\]
and setting $\eta_q=e^{-i\omega_qt_0}$ for $q=1,2,3$, the temporal electric field equations are, for $j=1,2,3$,
\begin{align*}
\e_1^+(x,t_0^+)\left(T_j\eta_3+R_j\eta_2\right)
&=\e^-(x,t_0^-)I_j\eta_1,\\
\e_2^+(x,t_0^+)\left(T_1^j\eta_3+T_2^j\eta_2\right)
&=\e^-(x,t_0^-)I_j\eta_1.
\end{align*}
Since ${\mathbf n}(y)=(0,0,1)$, the tangential electric field equations reduce to
\begin{align*}
T_1^1-T_1-R_1^1&=0,
&T_1^2-T_2-R_1^2&=0,\\
T_2^1-R_1-R_2^1&=0,
&T_2^2-R_2-R_2^2&=0,
\end{align*}
and the normal electric field equations reduce to
\begin{align*}
\e_2^+(y,t)T_1^3-\e_1^+(y,t)\left(T_3+R_1^3\right)&=0,\\
\e_2^+(y,t)T_2^3-\e_1^+(y,t)\left(R_3+R_2^3\right)&=0.
\end{align*}
The temporal magnetic field equations are, again, written componentwise for the three cyclic triples $(j,\ell,m)=(1,2,3),(2,3,1),(3,1,2)$,
\begin{align*}
\frac{T_\ell k_{\t,m}^1-T_m k_{\t,\ell}^1}{v_+^1(t_0)}\eta_3
+\frac{R_\ell k_{r,m}^1-R_m k_{r,\ell}^1}{v_+^1(t_0)}\eta_2
&=\frac{I_\ell k_{i,m}-I_m k_{i,\ell}}{v_-(t_0)}\eta_1,\\
\frac{T_1^\ell m_{\t,m}^1-T_1^m m_{\t,\ell}^1}{v_+^2(t_0)}\eta_3
+\frac{T_2^\ell m_{\t,m}^2-T_2^m m_{\t,\ell}^2}{v_+^2(t_0)}\eta_2
&=\frac{I_\ell k_{i,m}-I_m k_{i,\ell}}{v_-(t_0)}\eta_1.
\end{align*}
For the spatial magnetic field equations, define, for the same three cyclic triples $(j,\ell,m)$,
\begin{align*}
\mathcal B_j^{(1)}
&=\frac{T_1^\ell m_{\t,m}^1-T_1^m m_{\t,\ell}^1}{\mu_2^+(y,t)v_+^2(t)}
-\frac{T_\ell k_{\t,m}^1-T_m k_{\t,\ell}^1}{\mu_1^+(y,t)v_+^1(t)}
-\frac{R_1^\ell m_{r,m}^1-R_1^m m_{r,\ell}^1}{\mu_1^+(y,t)v_+^1(t)},\\
\mathcal B_j^{(2)}
&=\frac{T_2^\ell m_{\t,m}^2-T_2^m m_{\t,\ell}^2}{\mu_2^+(y,t)v_+^2(t)}
-\frac{R_\ell k_{r,m}^1-R_m k_{r,\ell}^1}{\mu_1^+(y,t)v_+^1(t)}
-\frac{R_2^\ell m_{r,m}^2-R_2^m m_{r,\ell}^2}{\mu_1^+(y,t)v_+^1(t)},\\
\mathcal C_j^{(1)}
&=\frac{T_1^\ell m_{\t,m}^1-T_1^m m_{\t,\ell}^1}{v_+^2(t)}
-\frac{T_\ell k_{\t,m}^1-T_m k_{\t,\ell}^1}{v_+^1(t)}
-\frac{R_1^\ell m_{r,m}^1-R_1^m m_{r,\ell}^1}{v_+^1(t)},\\
\mathcal C_j^{(2)}
&=\frac{T_2^\ell m_{\t,m}^2-T_2^m m_{\t,\ell}^2}{v_+^2(t)}
-\frac{R_\ell k_{r,m}^1-R_m k_{r,\ell}^1}{v_+^1(t)}
-\frac{R_2^\ell m_{r,m}^2-R_2^m m_{r,\ell}^2}{v_+^1(t)}.
\end{align*}
Then the tangential magnetic field equations can be written as
\begin{align*}
\mathcal B_1^{(1)}&=0,
&\mathcal B_2^{(1)}&=0,\\
\mathcal B_1^{(2)}&=0,
&\mathcal B_2^{(2)}&=0,
\end{align*}
and the normal magnetic field equations can be written as
\begin{align*}
\mathcal C_3^{(1)}=0,
\qquad
\mathcal C_3^{(2)}=0.
\end{align*}
Finally, the transversality equations are
\begin{align*}
\sum_{j=1}^3 k_{i,j}I_j&=0,
&\sum_{j=1}^3 k_{\t,j}^1T_j&=0,
&\sum_{j=1}^3 k_{r,j}^1R_j&=0,\\
\sum_{j=1}^3 m_{\t,j}^1T_1^j&=0,
&\sum_{j=1}^3 m_{\t,j}^2T_2^j&=0,
&\sum_{j=1}^3 m_{r,j}^1R_1^j&=0,
&\sum_{j=1}^3 m_{r,j}^2R_2^j&=0.
\end{align*}

Thus the unknown amplitude components can be ordered as
\[
\mathbf u=\left(
T_1,T_2,T_3,
R_1,R_2,R_3,
T_1^1,T_1^2,T_1^3,
T_2^1,T_2^2,T_2^3,
R_1^1,R_1^2,R_1^3,
R_2^1,R_2^2,R_2^3
\right)^T\in\C^{18}.
\]
The incident amplitude ${\mathbf I}$ is prescribed and must satisfy the compatibility condition $\sum_{j=1}^3 k_{i,j}I_j=0$. The remaining amplitude equations form a linear system
\begin{equation}\label{eq:amplitude-system}
\mathcal A\mathbf u=\mathbf f,
\end{equation}
where $\mathbf f$ depends only on the incident amplitude and the rows of this system are as follows. To shorten notation, set
\[
\begin{gathered}
\epsilon_1^0=\e_1^+(x,t_0^+),\qquad
\epsilon_2^0=\e_2^+(x,t_0^+),\qquad
\epsilon_-^0=\e^-(x,t_0^-),\\
\epsilon_1^\Gamma=\e_1^+(y,t),\qquad
\epsilon_2^\Gamma=\e_2^+(y,t),\\
a_1=\frac{\eta_3}{v_+^1(t_0)},\qquad
b_1=\frac{\eta_2}{v_+^1(t_0)},\qquad
a_2=\frac{\eta_3}{v_+^2(t_0)},\qquad
b_2=\frac{\eta_2}{v_+^2(t_0)},\qquad
a_-=\frac{\eta_1}{v_-(t_0)},\\
s_1=\frac{1}{\mu_1^+(y,t)v_+^1(t)},\qquad
s_2=\frac{1}{\mu_2^+(y,t)v_+^2(t)},
\qquad
r_1=\frac{1}{v_+^1(t)},\qquad
r_2=\frac{1}{v_+^2(t)}.
\end{gathered}
\]
To identify the matrix and right hand side, let $\mathrm{Id}_3$ be the $3\times 3$ identity matrix,
\[
\Pi=\begin{pmatrix}1&0&0\\0&1&0\end{pmatrix},
\qquad
\xi=\begin{pmatrix}0&0&1\end{pmatrix},
\]
and for any vector $p=(p_1,p_2,p_3)$ define
\[
C(p)=
\begin{pmatrix}
0&p_3&-p_2\\
-p_3&0&p_1\\
p_2&-p_1&0
\end{pmatrix},
\qquad
C(p)A=A\times p.
\]
With respect to the block decomposition
\[
\mathbf u=({\mathbf T},{\mathbf R},{\mathbf T}_1,{\mathbf T}_2,{\mathbf R}_1,{\mathbf R}_2)^T,
\]
where each entry is a $3$-vector, the coefficient matrix is the $30\times 18$ matrix
\[
\mathcal A=\begin{pmatrix}
\mathcal A_E\\
\mathcal A_{M,0}\\
\mathcal A_{M,\Gamma}\\
\mathcal A_D
\end{pmatrix},
\qquad
\mathbf f=\begin{pmatrix}
\mathbf f_E\\
\mathbf f_{M,0}\\
\mathbf 0_6\\
\mathbf 0_6
\end{pmatrix}.
\]
The block dimensions are
\[
\mathcal A_E\in\C^{12\times 18},
\qquad
\mathcal A_{M,0}\in\C^{6\times 18},
\qquad
\mathcal A_{M,\Gamma}\in\C^{6\times 18},
\qquad
\mathcal A_D\in\C^{6\times 18}.
\]
Thus $\mathcal A$ has $12+6+6+6=30$ rows and $18$ columns, while
\[
\mathbf f_E\in\C^{12},
\qquad
\mathbf f_{M,0}\in\C^6,
\qquad
\mathbf 0_6\in\C^6,
\]
so $\mathbf f\in\C^{30}$.
Here
\begin{equation}\label{eq:electric-amplitude-matrix-block}
\mathcal A_E=\begin{pmatrix}
\epsilon_1^0\eta_3\mathrm{Id}_3&\epsilon_1^0\eta_2\mathrm{Id}_3&0&0&0&0\\
0&0&\epsilon_2^0\eta_3\mathrm{Id}_3&\epsilon_2^0\eta_2\mathrm{Id}_3&0&0\\
-\Pi&0&\Pi&0&-\Pi&0\\
0&-\Pi&0&\Pi&0&-\Pi\\
-\epsilon_1^\Gamma\xi&0&\epsilon_2^\Gamma\xi&0&-\epsilon_1^\Gamma\xi&0\\
0&-\epsilon_1^\Gamma\xi&0&\epsilon_2^\Gamma\xi&0&-\epsilon_1^\Gamma\xi
\end{pmatrix},
\qquad
\mathbf f_E=\begin{pmatrix}
\epsilon_-^0\eta_1{\mathbf I}\\
\epsilon_-^0\eta_1{\mathbf I}\\
\mathbf 0_2\\
\mathbf 0_2\\
0\\
0
\end{pmatrix},
\end{equation}
\begin{equation}\label{eq:temporal-magnetic-amplitude-matrix-block}
\mathcal A_{M,0}=\begin{pmatrix}
a_1C(\k_\t^1)&b_1C(\k_r^1)&0&0&0&0\\
0&0&a_2C(\m_\t^1)&b_2C(\m_\t^2)&0&0
\end{pmatrix},
\qquad
\mathbf f_{M,0}=\begin{pmatrix}
a_-C(\k_i){\mathbf I}\\
a_-C(\k_i){\mathbf I}
\end{pmatrix},
\end{equation}
\begin{equation}\label{eq:spatial-magnetic-amplitude-matrix-block}
\mathcal A_{M,\Gamma}=\begin{pmatrix}
-s_1\Pi C(\k_\t^1)&0&s_2\Pi C(\m_\t^1)&0&-s_1\Pi C(\m_r^1)&0\\
0&-s_1\Pi C(\k_r^1)&0&s_2\Pi C(\m_\t^2)&0&-s_1\Pi C(\m_r^2)\\
-r_1\xi C(\k_\t^1)&0&r_2\xi C(\m_\t^1)&0&-r_1\xi C(\m_r^1)&0\\
0&-r_1\xi C(\k_r^1)&0&r_2\xi C(\m_\t^2)&0&-r_1\xi C(\m_r^2)
\end{pmatrix},
\end{equation}
and
\begin{equation}\label{eq:transversality-amplitude-matrix-block}
\mathcal A_D=\begin{pmatrix}
(\k_\t^1)^T&0&0&0&0&0\\
0&(\k_r^1)^T&0&0&0&0\\
0&0&(\m_\t^1)^T&0&0&0\\
0&0&0&(\m_\t^2)^T&0&0\\
0&0&0&0&(\m_r^1)^T&0\\
0&0&0&0&0&(\m_r^2)^T
\end{pmatrix}.
\end{equation}

\subsubsection*{Solvability and compatibility for \eqref{eq:amplitude-system}}

Because the material parameters are piecewise constant, each of the quantities appearing in the entries of $\mathcal A$ and $\mathbf f$ is a fixed number after the corresponding side of the interface has been chosen. Thus, once the wave vectors and the incident amplitude are prescribed, $\mathcal A$ is a fixed $30\times 18$ matrix and $\mathbf f$ is a fixed vector in $\C^{30}$. The existence of transmitted and reflected amplitudes means that there is some vector $\mathbf u\in \C^{18}$ satisfying $\mathcal A\mathbf u=\mathbf f$. Equivalently, $\mathbf f$ must lie in the range, or column space, of $\mathcal A$:
\[
\mathbf f\in \operatorname{Ran}(\mathcal A).
\]
That is, $\mathbf f$ must be a linear combination of the columns of $\mathcal A$. This is the same as the rank condition
\[
\operatorname{rank}(\mathcal A)=\operatorname{rank}\begin{pmatrix}\mathcal A&\mathbf f\end{pmatrix}.
\]
Indeed, adjoining $\mathbf f$ as one more column should not increase the rank; if it does increase the rank, then $\mathbf f$ is not generated by the columns of $\mathcal A$, and the amplitude system has no solution.

It is useful to note here that $\mathbf f$ depends only on the prescribed incident wave and on the fixed material constants. Thus $\mathbf f$ contains no unknown transmitted or reflected amplitudes. This does not automatically imply solvability, but it makes solvability a completely explicit finite-dimensional algebraic check: for the given incident wave, one only has to test whether this known vector $\mathbf f$ lies in the column space of the known matrix $\mathcal A$, or equivalently whether the rank condition above holds. If this test fails, then that incident wave cannot be matched by transmitted and reflected amplitudes satisfying all the imposed boundary conditions. If it holds, the solution set is one particular solution plus $\ker(\mathcal A)$; in particular, the amplitudes are unique precisely when $\operatorname{rank}(\mathcal A)=18$.

\subsection{Coordinate form of the amplitude system}\label{subsec:amplitude_coordinate_system}

For the convenience of the reader, we now write the equation \eqref{eq:amplitude-system}, namely $\mathcal A\mathbf u=\mathbf f$, in coordinates.
The following electric equations are the coordinate expansion of the matrix pair $(\mathcal A_E,\mathbf f_E)$ in \eqref{eq:electric-amplitude-matrix-block}: the first six equations come from its first two block rows, the next four from its third and fourth block rows, and the final two from its fifth and sixth block rows.
\begin{align*}
\epsilon_1^0\eta_3T_1+\epsilon_1^0\eta_2R_1&=\epsilon_-^0\eta_1I_1,\\
\epsilon_1^0\eta_3T_2+\epsilon_1^0\eta_2R_2&=\epsilon_-^0\eta_1I_2,\\
\epsilon_1^0\eta_3T_3+\epsilon_1^0\eta_2R_3&=\epsilon_-^0\eta_1I_3,\\
\epsilon_2^0\eta_3T_1^1+\epsilon_2^0\eta_2T_2^1&=\epsilon_-^0\eta_1I_1,\\
\epsilon_2^0\eta_3T_1^2+\epsilon_2^0\eta_2T_2^2&=\epsilon_-^0\eta_1I_2,\\
\epsilon_2^0\eta_3T_1^3+\epsilon_2^0\eta_2T_2^3&=\epsilon_-^0\eta_1I_3,\\
-T_1+T_1^1-R_1^1&=0,
&-T_2+T_1^2-R_1^2&=0,\\
-R_1+T_2^1-R_2^1&=0,
&-R_2+T_2^2-R_2^2&=0,\\
-\epsilon_1^\Gamma T_3+\epsilon_2^\Gamma T_1^3-\epsilon_1^\Gamma R_1^3&=0,\\
-\epsilon_1^\Gamma R_3+\epsilon_2^\Gamma T_2^3-\epsilon_1^\Gamma R_2^3&=0.
\end{align*}
The temporal magnetic equations are the coordinate expansion of $(\mathcal A_{M,0},\mathbf f_{M,0})$ in \eqref{eq:temporal-magnetic-amplitude-matrix-block}: the first three equations come from its first block row and the final three from its second block row.
\begin{align*}
a_1\left(T_2k_{\t,3}^1-T_3k_{\t,2}^1\right)
+b_1\left(R_2k_{r,3}^1-R_3k_{r,2}^1\right)
&=a_-\left(I_2k_{i,3}-I_3k_{i,2}\right),\\
a_1\left(T_3k_{\t,1}^1-T_1k_{\t,3}^1\right)
+b_1\left(R_3k_{r,1}^1-R_1k_{r,3}^1\right)
&=a_-\left(I_3k_{i,1}-I_1k_{i,3}\right),\\
a_1\left(T_1k_{\t,2}^1-T_2k_{\t,1}^1\right)
+b_1\left(R_1k_{r,2}^1-R_2k_{r,1}^1\right)
&=a_-\left(I_1k_{i,2}-I_2k_{i,1}\right),\\
a_2\left(T_1^2m_{\t,3}^1-T_1^3m_{\t,2}^1\right)
+b_2\left(T_2^2m_{\t,3}^2-T_2^3m_{\t,2}^2\right)
&=a_-\left(I_2k_{i,3}-I_3k_{i,2}\right),\\
a_2\left(T_1^3m_{\t,1}^1-T_1^1m_{\t,3}^1\right)
+b_2\left(T_2^3m_{\t,1}^2-T_2^1m_{\t,3}^2\right)
&=a_-\left(I_3k_{i,1}-I_1k_{i,3}\right),\\
a_2\left(T_1^1m_{\t,2}^1-T_1^2m_{\t,1}^1\right)
+b_2\left(T_2^1m_{\t,2}^2-T_2^2m_{\t,1}^2\right)
&=a_-\left(I_1k_{i,2}-I_2k_{i,1}\right).
\end{align*}
The spatial magnetic equations are the coordinate expansion of $\mathcal A_{M,\Gamma}$ in \eqref{eq:spatial-magnetic-amplitude-matrix-block}: the first four equations come from the first two block rows containing $\Pi$, and the final two come from the last two block rows containing $\xi$.
\begin{align*}
s_2\left(T_1^2m_{\t,3}^1-T_1^3m_{\t,2}^1\right)
-s_1\left(T_2k_{\t,3}^1-T_3k_{\t,2}^1\right)
-s_1\left(R_1^2m_{r,3}^1-R_1^3m_{r,2}^1\right)&=0,\\
s_2\left(T_1^3m_{\t,1}^1-T_1^1m_{\t,3}^1\right)
-s_1\left(T_3k_{\t,1}^1-T_1k_{\t,3}^1\right)
-s_1\left(R_1^3m_{r,1}^1-R_1^1m_{r,3}^1\right)&=0,\\
s_2\left(T_2^2m_{\t,3}^2-T_2^3m_{\t,2}^2\right)
-s_1\left(R_2k_{r,3}^1-R_3k_{r,2}^1\right)
-s_1\left(R_2^2m_{r,3}^2-R_2^3m_{r,2}^2\right)&=0,\\
s_2\left(T_2^3m_{\t,1}^2-T_2^1m_{\t,3}^2\right)
-s_1\left(R_3k_{r,1}^1-R_1k_{r,3}^1\right)
-s_1\left(R_2^3m_{r,1}^2-R_2^1m_{r,3}^2\right)&=0,\\
r_2\left(T_1^1m_{\t,2}^1-T_1^2m_{\t,1}^1\right)
-r_1\left(T_1k_{\t,2}^1-T_2k_{\t,1}^1\right)
-r_1\left(R_1^1m_{r,2}^1-R_1^2m_{r,1}^1\right)&=0,\\
r_2\left(T_2^1m_{\t,2}^2-T_2^2m_{\t,1}^2\right)
-r_1\left(R_1k_{r,2}^1-R_2k_{r,1}^1\right)
-r_1\left(R_2^1m_{r,2}^2-R_2^2m_{r,1}^2\right)&=0.
\end{align*}
Finally, the six transversality equations are the coordinate expansion, in row order, of $\mathcal A_D$ in \eqref{eq:transversality-amplitude-matrix-block}.
\begin{align*}
k_{\t,1}^1T_1+k_{\t,2}^1T_2+k_{\t,3}^1T_3&=0,\\
k_{r,1}^1R_1+k_{r,2}^1R_2+k_{r,3}^1R_3&=0,\\
m_{\t,1}^1T_1^1+m_{\t,2}^1T_1^2+m_{\t,3}^1T_1^3&=0,\\
m_{\t,1}^2T_2^1+m_{\t,2}^2T_2^2+m_{\t,3}^2T_2^3&=0,\\
m_{r,1}^1R_1^1+m_{r,2}^1R_1^2+m_{r,3}^1R_1^3&=0,\\
m_{r,1}^2R_2^1+m_{r,2}^2R_2^2+m_{r,3}^2R_2^3&=0.
\end{align*}
This is a linear system in the $18$ unknown components of $\mathbf u$; depending on the geometry and material parameters, some rows may be linearly dependent or compatibility conditions may be required.

\subsection{Example: Oblique incidence with different material constants and unit phase directions}\label{subsec:amplitude_example}

We illustrate the amplitude system \eqref{eq:amplitude-system} in a concrete oblique-incidence case. Let $t_0=0$ and take
\begin{equation}\label{eq:example-material-constants}
\epsilon_-^0=1,
\qquad
\epsilon_1^0=\epsilon_1^\Gamma=2,
\qquad
\epsilon_2^0=\epsilon_2^\Gamma=2,
\qquad
\mu^-=1,
\qquad
\mu_1^+=2,
\qquad
\mu_2^+=6.
\end{equation}
Then
\begin{equation}\label{eq:example-velocities}
v_-=1,
\qquad
v_+^1=\frac12,
\qquad
v_+^2=\frac{1}{2\sqrt3}.
\end{equation}

Choose $\omega_1>0$ and prescribe
\begin{equation}\label{eq:example-incident-data}
{\mathbf I}=E_0\mathbf e_2,
\qquad
\k_i=\frac{\sqrt3}{2}\mathbf e_1+\frac12\mathbf e_3,
\qquad
E_0\in\C.
\end{equation}
Then $|\k_i|=1$ and $\k_i\cdot{\mathbf I}=0$. Since $\k_\t^1$ and $\k_r^1$ are unit vectors, \eqref{eq:transmitted and reflected vectors in the first medium} and \eqref{eq:example-velocities} give, for the positive temporal frequencies,
\begin{equation}\label{eq:example-temporal-data}
\omega_2=\omega_3=\frac{\omega_1}{2},
\qquad
\k_\t^1=\k_r^1=\k_i.
\end{equation}
The change at $t=0$ is a genuine temporal material jump: the parameters change from $(\epsilon_-^0,\mu^-)=(1,1)$ to $(\epsilon_1^0,\mu_1^+)=(2,2)$, the wave speed changes from $v_-=1$ to $v_+^1=1/2$, and the temporal frequency changes from $\omega_1$ to $\omega_1/2$. The two positive-frequency branches in \eqref{eq:example-temporal-data} coincide because their phase directions are required to be unit vectors. Moreover, the temporal impedances are matched, since $\sqrt{\mu^-/\epsilon_-^0}=\sqrt{\mu_1^+/\epsilon_1^0}=1$; consequently, the explicit solution below is reflectionless at the temporal interface, with ${\mathbf R}=0$. We retain this simplifying temporal jump so that the example isolates the additional scattering caused by the nontrivial spatial jump from $(2,2)$ to $(2,6)$.
The tangential components at $\Gamma$ are fixed by \eqref{eq:system showing GSL}. Choosing the normal signs to obtain the desired outgoing directions gives
\begin{equation}\label{eq:example-unit-directions}
\begin{gathered}
\m_\t^1=\frac12\mathbf e_1+\frac{\sqrt3}{2}\mathbf e_3,
\qquad
\m_\t^2=\frac12\mathbf e_1-\frac{\sqrt3}{2}\mathbf e_3,\\
\m_r^1=\frac{\sqrt3}{2}\mathbf e_1+\frac12\mathbf e_3,
\qquad
\m_r^2=\frac{\sqrt3}{2}\mathbf e_1-\frac12\mathbf e_3.
\end{gathered}
\end{equation}
All four vectors in \eqref{eq:example-unit-directions} are unit vectors and satisfy the generalized Snell law tangentially at $\Gamma$. Figure \ref{fig:example-unit-directions} displays these directions and their tangential components.

\begin{figure}[htbp]
\centering
\begin{tikzpicture}[>=Stealth, scale=2.15, line cap=round, line join=round, every node/.style={font=\footnotesize,text=black}]
\node[font=\bfseries\small] at (0,1.48) {Unit directions at $\Gamma$};
\draw[->] (-1.25,0) -- (1.55,0) node[right] {$\mathbf e_1$};
\draw[->] (0,-1.25) -- (0,1.35) node[above] {$\mathbf e_3$};
\draw[dashed] (0,0) circle (1);
\node[below left] at (0,0) {$\Gamma$};
\draw[->, very thick, blue] (0,0) -- (0.866,0.5);
\draw[blue] (0.866,0.5) -- (1.08,0.67);
\node[anchor=west, align=left, text=black, fill=white, inner sep=1pt] at (1.1,0.72) {$\k_i=\k_\t^1=\k_r^1$\\$=\m_r^1$};
\draw[->, very thick, ForestGreen] (0,0) -- (0.5,0.866);
\node[anchor=south, text=black, fill=white, inner sep=1pt] at (0.5,1.06) {$\m_\t^1$};
\draw[->, very thick, ForestGreen] (0,0) -- (0.5,-0.866);
\node[anchor=north, text=black, fill=white, inner sep=1pt] at (0.5,-1.06) {$\m_\t^2$};
\draw[->, very thick, BurntOrange] (0,0) -- (0.866,-0.5);
\draw[BurntOrange] (0.866,-0.5) -- (1.08,-0.67);
\node[anchor=west, text=black, fill=white, inner sep=1pt] at (1.1,-0.72) {$\m_r^2$};
\draw[dotted, ForestGreen] (0.5,0.866) -- (0.5,0);
\draw[dotted, blue] (0.866,0.5) -- (0.866,0);
\node[below, fill=white, inner sep=0.5pt] at (0.5,-0.03) {$1/2$};
\node[above, fill=white, inner sep=0.5pt] at (0.866,0.03) {$\sqrt3/2$};
\node[align=center, fill=white, inner sep=1pt] at (0,-1.45) {dotted drops indicate tangential components};
\end{tikzpicture}
\caption{Unit phase directions in the $\mathbf e_1\mathbf e_3$-plane. The horizontal coordinate is tangential to $\Gamma$, while the vertical coordinate is normal to $\Gamma$.}
\label{fig:example-unit-directions}
\end{figure}

Since \eqref{eq:example-temporal-data} gives $\k_\t^1=\k_r^1=\k_i$, transversality puts ${\mathbf T}$ and ${\mathbf R}$ in $\k_i^\perp$. Use the orthonormal basis
\begin{equation}\label{eq:example-transverse-basis}
\mathbf e_2,
\qquad
\mathbf p_\perp=\frac12\mathbf e_1-\frac{\sqrt3}{2}\mathbf e_3
\end{equation}
for this plane, and write
\begin{equation}\label{eq:example-temporal-amplitudes}
{\mathbf T}=E_0\left(a\mathbf e_2+b\mathbf p_\perp\right),
\qquad
{\mathbf R}=E_0\left(c\mathbf e_2+d\mathbf p_\perp\right),
\qquad
a,b,c,d\in\C.
\end{equation}
The temporal electric boundary condition \eqref{first_new_temporal_bc} in $\Omega_1$ becomes
\begin{equation}\label{eq:example-temporal-electric-row}
2({\mathbf T}+{\mathbf R})=E_0\mathbf e_2,
\qquad\text{equivalently}\qquad
a+c=\frac12,
\quad
b+d=0.
\end{equation}
The temporal magnetic boundary condition \eqref{first_new_temporal_bc_forH} in $\Omega_1$ is obtained from the first identity in \eqref{eq:example-temporal-electric-row} by crossing with $\k_i$, so it gives no additional restriction. Hence the temporal interface alone leaves
\begin{equation}\label{eq:example-temporal-family}
{\mathbf T}=E_0\left(a\mathbf e_2+b\mathbf p_\perp\right),
\qquad
{\mathbf R}=E_0\left(\left(\frac12-a\right)\mathbf e_2-b\mathbf p_\perp\right),
\qquad
a,b\in\C.
\end{equation}

For the remaining branches, choose transverse unit vectors
\begin{equation}\label{eq:example-spatial-transverse-basis}
\mathbf p_\t^1=\frac{\sqrt3}{2}\mathbf e_1-\frac12\mathbf e_3,
\qquad
\mathbf p_\t^2=-\frac{\sqrt3}{2}\mathbf e_1-\frac12\mathbf e_3,
\qquad
\mathbf p_r^2=-\frac12\mathbf e_1-\frac{\sqrt3}{2}\mathbf e_3.
\end{equation}
Together with $\mathbf p_\perp$ for the $\m_r^1=\k_i$ branch, the most general transverse amplitudes are
\begin{equation}\label{eq:example-branch-amplitudes}
\begin{gathered}
{\mathbf T}_1=E_0(\alpha\mathbf e_2+\beta\mathbf p_\t^1),
\qquad
{\mathbf T}_2=E_0(\gamma\mathbf e_2+\delta\mathbf p_\t^2),\\
{\mathbf R}_1=E_0(\rho\mathbf e_2+\sigma\mathbf p_\perp),
\qquad
{\mathbf R}_2=E_0(\tau\mathbf e_2+\upsilon\mathbf p_r^2).
\end{gathered}
\end{equation}
Although \eqref{eq:example-temporal-data} is degenerate in frequency, we impose the separated branch equations from \eqref{eq:amplitude-system}; any solution then also satisfies the combined degenerate system. Substituting \eqref{eq:example-temporal-amplitudes} and \eqref{eq:example-branch-amplitudes} into \eqref{eq:amplitude-system} gives the following non-redundant scalar system:
\begin{equation}\label{eq:example-reduced-scalar-system}
\begin{gathered}
a+c=\frac12,
\qquad
b+d=0,
\qquad
\alpha+\gamma=\frac12,
\qquad
\beta=\delta=0,
\qquad
\alpha-\gamma=\frac16,\\
\rho=\alpha-a,
\qquad
\sigma=-b,
\qquad
\tau=\gamma-c,
\qquad
d=0,
\qquad
\upsilon=0,
\qquad
\tau=c+\gamma.
\end{gathered}
\end{equation}
Solving \eqref{eq:example-reduced-scalar-system} gives
\begin{equation}\label{eq:example-scalar-solution}
\begin{gathered}
a=\frac12,
\qquad
b=0,
\qquad
c=0,
\qquad
d=0,
\qquad
\alpha=\frac13,
\qquad
\beta=0,\\
\gamma=\frac16,
\qquad
\delta=0,
\qquad
\rho=-\frac16,
\qquad
\sigma=0,
\qquad
\tau=\frac16,
\qquad
\upsilon=0.
\end{gathered}
\end{equation}
Therefore the vector amplitudes are
\begin{equation}\label{eq:example-vector-amplitudes}
\begin{gathered}
{\mathbf T}=\frac{E_0}{2}\mathbf e_2,
\qquad
{\mathbf R}=0,
\qquad
{\mathbf T}_1=\frac{E_0}{3}\mathbf e_2,
\qquad
{\mathbf T}_2=\frac{E_0}{6}\mathbf e_2,\\
{\mathbf R}_1=-\frac{E_0}{6}\mathbf e_2,
\qquad
{\mathbf R}_2=\frac{E_0}{6}\mathbf e_2.
\end{gathered}
\end{equation}
Thus the solution is unique for the prescribed incident wave and chosen phase directions. 
\section{Conclusion}
We developed a distributional framework for deriving refraction laws in media with temporal and spatial interfaces. Starting from the Maxwell system in space-time distributions, we obtained the temporal jump conditions for the electric and magnetic fields under explicit trace assumptions on the fields and material parameters.

These boundary conditions were then used to derive generalized Snell laws for the temporal and spatial interfaces in the slab geometry. The resulting phase relations determine the admissible wave vectors, while the electric and magnetic field boundary conditions lead to a finite-dimensional linear system \eqref{eq:amplitude-system} for the six unknown amplitudes. The solvability of this system is expressed by the range, or equivalently rank, compatibility condition, showing that the incident amplitude and material constants cannot in general be chosen independently.

An explicit oblique-incidence example illustrates the construction. Finally, we expect that the same distributional methods can be used to treat configurations with multiple spatial interfaces, as well as more general spatio-temporal interface geometries and material laws.

\setcounter{equation}{0}

\bibliographystyle{unsrt}
\bibliography{timevaryingoptics_2.bib}


\end{document}